\newtheorem{theorem}{Theorem}[section]
\newtheorem{proposition}[theorem]{Proposition}
\newtheorem{corollary}[theorem]{Corollary}
\newtheorem{lemma}[theorem]{Lemma}
\newtheorem{definition}[theorem]{Definition}
\newtheorem{example}[theorem]{Example}
\newtheorem{remark}[theorem]{Remark}
\newcommand{\Q}{\mathcal{Q}}
\newcommand{\HH}{\mathcal{H}}
\title{On sheaves on semicartesian quantales and their truth values}
\author[1]{Ana Luiza Tenório}
\author[2]{Caio de Andrade Mendes}
\author[3]{Hugo Luiz Mariano}
\affil[1]{Department of Mathematics, University of São Paulo, \texttt{analuiza@ime.usp.br}}
\affil[2]{Department of Mathematics, University of São Paulo, \texttt{caio.mendes@usp.br}}
\affil[3]{Department of Mathematics, University of São Paulo, \texttt{hugomar@ime.usp.br}}
\begin{document}
\maketitle
\begin{abstract}{
In this paper, introduce a new definition of sheaves on semicartesian quantales, providing first examples and categorical properties. We note that our sheaves are similar to the standard definition of a sheaf on a locale, however, we prove in that in general it is not an elementary topos - since the lattice of external truth values of $Sh(Q)$, $Sub(1)$,  is canonically isomorphic to the quantale $Q$ - placing this paper as part of a greater project towards a monoidal (not necessarily cartesian) closed version of elementary topos. To start the study the logical aspects of the category of sheaves we are introducing, we explore the nature of the ``internal truth value objects" in such sheaves categories. More precisely, we analyze two candidates for subobject classifier for different subclasses of commutative and semicartesian quantales.   }

\end{abstract}

% keywords can be removed
\keywords{sheaves \and quantales \and subobject classifier}

\section{Introduction}
Sheaf Theory is a well-established area of research with applications in Algebraic Topology \cite{dimca2004sheaves}, Algebraic Geometry \cite{grothendieck1972topos}, Geometry \cite{kashiwara2013sheaves}, Logic \cite{maclane1992sheaves}, and others. A sheaf on a locale $L$ is a functor $F: L^{op} \rightarrow Set$ that satisfies certain gluing properties expressed by an equalizer diagram. However, for quantales -- a non-idempotent and non-commutative generalization of locales introduced by C.J. Mulvey \cite{mulvey86quantales} -- there are many definitions of sheaves on quantales: in  \cite{borceux1986quantales}, sheaves on  quantales are defined with the goal of forming Grothendieck toposes from quantales. In \cite{miraglia1998sheaves}, the sheaf definition  preserves an intimate relation with $Q$-sets, an object introduced in the paper as a proposal to generalize $\Omega$-sets, defined in \cite{fourman1979sheaves}, for $\Omega$ a complete Heyting algebra\footnote{Given a proper notion of morphisms of $\Omega$-sets, the category of $\Omega$-sets is equivalent to the category of sheaves on $\Omega$.}. More recently, in \cite{aguilar2008sheaves}, sheaves are functors that make a certain diagram an equalizer. Besides, an extensive work about sheaves on \textit{involutive quantale}, which goes back to ideas of Bob Walters \cite{walters1981sheaves}, were recently studied by Hans Heymans, Isar Stubbe \cite{heymans2012grothendieck}, and Pedro Resende \cite{resende2012groupoid}, for instance.

In this paper, we provide the basic definitions and properties of quantales in Section \ref{sec:quantales}, introducing the notions of Artinian and geometric quantales with examples that use von Neumann regular rings and the extended half-line $[0,+\infty]$, respectively.
In Section \ref{sec:sheaves_quantales}, we provide a new definition of sheaves on semicartesian quantales (as a functor that forms an equalizer diagram), given first examples and categorical properties. We note that our sheaves are similar to the standard definition of a sheaf, however, we prove in Section \ref{sec:comparing_with_Groth_topos}, in general, it is not an elementary topos - since the lattice of external truth values of $Sh(Q)$, $Sub(1)$,  is canonically isomorphic to the quantale $Q$ - , placing this paper as part of a greater project towards a monoidal closed but non-cartesian closed version of elementary topos. To explore our interest in the logical aspects of the category of sheaves we are introducing, we provide in Section \ref{sec:truthvalues} a detailed construction and analysis of two candidates for subobject classifier for different subclasses of commutative and semicartesian quantales:  In other words, we investigate the nature of the ``internal truth value objects" in such sheaves categories.

\textit{We assume that the reader is comfortable with some concepts of category theory such as functors, natural transformations, (co)limits constructions, and adjointness.}

\section{Quantales}\label{sec:quantales}

\begin{definition}
A \textbf{quantale} is a structure $Q = (Q, \leq, \odot)$ where: $(Q, \leq)$ is a complete lattice; $(Q, \odot)$ is a  semigroup -  i.e. the  binary operation $\odot: Q \times Q \to Q$ (called multiplication) is associative; moreover $Q = (Q, \leq, \odot)$  satisfies the following distributive laws: for all $a\in  Q$ and $\{b_i\}_{i\in I} \subseteq Q$
\begin{align*}
    a \odot (\bigvee\limits_{i \in I}b_i) = \bigvee\limits_{i \in I}(a \odot b_i) \enspace \mbox{ and } \enspace
    & (\bigvee\limits_{i \in I}b_i)  \odot a = \bigvee\limits_{i \in I}(b_i \odot a)
\end{align*}
\end{definition}

\begin{remark} Note that:
\begin{enumerate}
    \item In any quantale $Q$ the multiplication is increasing in both entries, where increasing in the second entry means that given $a, b, b' \in Q$ such that $b \leq b'$, we have $a \odot b \leq a \odot b'$. Indeed $ a \odot b' = a \odot (b \vee b') = (a \odot b) \vee (a \odot b')$, thus $a \odot b \leq a \odot b'$.
    \item Since  the least element of  the quantale $Q$, here denoted by $0$ (or $\bot$), is also the supremum of the emptyset, note that  $a \odot 0 = 0 = 0 \odot a, \forall a \in Q$.
\end{enumerate}
\end{remark}\label{remark_quantale_properties}

Similarly, a {\em unital} quantale is a structure $(Q, \leq, \odot, 1)$, where $(Q, \odot, 1)$ is a {\em monoid}. Note that the associativity of $\odot$ and the identity element provide a (strict) monoidal structure to $(Q, \leq, \odot, 1)$, viewed as a poset category.

If $(Q, \leq)$ is a complete lattice and it holds the distributive laws above taking the infimum as the associative binary operation ($\odot = \wedge$) turns  $\odot$ into a commutative operation where $\top$, the largest member of $Q$, is its neutral element. In such a case, we obtain a \textbf{locale}. Thus every locale is a unital quantale. The main example of locale used in sheaf theory is the locale $\mathcal{O}(X)$ of open subsets of a topological space $X$, where the order relation is given by the inclusion, the supremum is the union, and the finitary infimum is the intersection. We list below some examples of unital quantales that are not locales:

\begin{example}\label{ex_quantales}
\begin{enumerate}
    \item The extended half-line $[0,\infty]$ with order $\geq$, and the usual sum of real numbers as the multiplication. Since the order relation is $\geq$, the top element is $0$ and the bottom elements is $\infty$; 
    
    \item The extended natural numbers $\mathbb{N}\cup \{\infty\}$, with the same quantalic structure of $[0,\infty]$;
    
    \item The set $\mathcal{I}(R)$ of ideals of a commutative and unital ring $R$ with order $\subseteq$, the inclusion of ideals, and the multiplication as the multiplication of ideals. The supremum is the sum of ideals, the top element is $R$ and the trivial ideal is the bottom;

    \item The set of closed right (or left) ideals of a unital $C^*$-algebra, the order is the inclusion of closed right (or left) ideals, and the multiplication is the topological closure of the multiplication of the ideals.
\end{enumerate}
\end{example}

For more details and examples we recommend \cite{rosenthal_quantales}. 

\begin{definition}
A quantale $Q = (Q, \leq, \odot)$ is
\begin{enumerate}
    \item \textbf{commutative} when $(Q, \odot)$ is a commutative semigroup;
    \item  \textbf{idempotent} when $a \odot a = a$, for $a \in Q$;
    \item \textbf{right-sided} when $a \odot \top = a$, for all $a \in Q$, where $\top$ is the top member of the poset;
    \item \textbf{semicartesian}  when $a \odot b \leq a, b$, for all $a,b \in Q$;
    \item \textbf{integral} when $Q$ is unital and $1 = \top$;
    \item \textbf{divisible} when $ a \leq b \implies \exists x, b \odot x = a,$; %(iff $b \odot (b \to a) = a$)
    \item \textbf{strict linear} when $\leq$ is a linear order and $a \neq 0, b \neq c \implies a \odot b \neq a \odot c$, for all $a, b, c \in Q.$
\end{enumerate}
\end{definition}

The quantales $[0,\infty]$, $\mathbb{N} \cup \{\infty\}$, and $\mathcal{I}(R)$ are commutative and integral unital quantales. Besides, they are also divisible (for $\mathcal{I}(R)$, we have to take $R$ a PID). The last example is neither commutative nor semicartesian but it is right-sided (resp. left-sided) quantale. \cite{rosenthal_quantales}.

\begin{remark} \label{quantale-re} Note that:
\begin{enumerate}
\item A quantale $(Q, \leq, \odot)$ is semicartesian iff $a\odot b \leq a \wedge b$, for all $a, b \in Q$.
\item Let $Q$ be a unital quantale, then it is integral iff it  is semicartesian.  Indeed: suppose that $Q$ is integral, since $b \leq \top$ we have $a \odot b \leq a \odot \top = a \odot 1 = a$, then $Q$ is semicartesian; conversely, suppose that $Q$ is semicartesian, since $\top = \top \odot 1 \leq 1$, then $\top = 1$.

    \item Let $Q$ be a integral commutative quantale. Define a binary relation on $Q$ by:  
$x \preceq y \Leftrightarrow x = x  \odot y$. Then: (i) $\preceq$ is a partial order; (ii)  $x \preceq y \Rightarrow x \leq y$;
    (iii) $x \preceq y, y \leq z \Rightarrow x \preceq z$;  (iv)  $e \in Idem(Q) \Rightarrow $  ($e \preceq x \Leftrightarrow e \leq x$). 

\end{enumerate}
\end{remark}\label{remark_quantale_semi_properties}

The following result explains why we claim that our sheaves are orthogonal to notions of sheaves on idempotent quantales: 

\begin{proposition}\label{prop:when_quantale_is_locales}
If $Q$ is a semicartesian quantale, then by adding the idempotency we obtain that  $\odot = \wedge$. In other words, $Q$ is a locale.
\end{proposition}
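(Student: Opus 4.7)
The plan is to establish the two inequalities $a \odot b \leq a \wedge b$ and $a \wedge b \leq a \odot b$ separately, and then observe that the resulting identity forces the frame distributive law.

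First, I would invoke the first item of Remark \ref{quantale-re}: since $Q$ is semicartesian, we have $a \odot b \leq a \wedge b$ for every $a, b \in Q$, so one direction is free.

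For the reverse inequality I would set $c := a \wedge b$ and use idempotency together with monotonicity of $\odot$ in both arguments, as recorded in the first item of Remark \ref{remark_quantale_properties}. From $c \leq a$ and $c \leq b$ we get $c \odot c \leq a \odot b$, and idempotency gives $c \odot c = c$, hence $a \wedge b = c \leq a \odot b$. Combining both inequalities yields $a \odot b = a \wedge b$.

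Finally, to conclude that $Q$ is a locale, I would note that the quantale distributive law
\[
a \odot \Bigl(\bigvee_{i \in I} b_i\Bigr) \;=\; \bigvee_{i \in I} (a \odot b_i)
\]
immediately translates, via the identity $\odot = \wedge$ just established, into the frame distributivity of $\wedge$ over arbitrary suprema. Since $\wedge$ is automatically commutative, associative, and has $\top$ as neutral element on any complete lattice, the structure $(Q, \leq, \wedge)$ is a locale, and it coincides with the original quantale structure.

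The only genuinely nontrivial move is the use of idempotency on $a \wedge b$ to manufacture the reverse inequality; everything else is a direct consequence of the quantale axioms and the remarks already established. I do not foresee a serious obstacle, provided one carefully appeals to the monotonicity of $\odot$ in both arguments to pass from $a \wedge b \leq a, b$ to $(a \wedge b) \odot (a \wedge b) \leq a \odot b$.
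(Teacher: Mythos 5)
Your proposal is correct and follows essentially the same route as the paper: semicartesianity gives $a \odot b \leq a \wedge b$, and idempotency plus monotonicity of $\odot$ applied to the lower bound $a \wedge b$ gives the reverse inequality (the paper phrases this via the universal property of the meet, you instantiate it directly at $c = a \wedge b$, which is the same computation). Your closing observation that the quantale distributive law then becomes the frame law is a harmless addition the paper leaves implicit.
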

\begin{proof}
Since $Q$ is semicartesian, by Remark \ref{remark_quantale_semi_properties}, for any $b, c$ in $Q$, $b \odot c \leq b,c$.
If $Q$ is idempotent, for any $a \in Q$ such that $a \leq b$ and $a \leq c$ we have $a = a\odot a \leq b \odot c$, because the multiplication is increasing in both entries. 
So if $Q$ is semicartesian and idempotent, by the transitivity of the order relation:
$$a\leq b \odot c \iff a\leq b \mbox{ and } a\leq c $$
Thus the multiplication satisfies the definition of the meet operation.
\end{proof}
The above proposition is just a particular case of \cite[Proposition 2.1]{nlab:quantale}. Observe that any notion of sheaf on a idempotent and semicartesian quantale is necessarily a sheaf on a locale, which is a well-established object of study. 

{\bf Construction of quantales:}

\begin{enumerate}
    
\item  Notice that given a family of quantales $\{Q_i:  i \in I\}$ the cartesian product  $\prod_{i \in I}Q_i$ with component-wise order (i.e., %given $(a_i)_i, (b_i)_i \in \prod_{i \in I}Q_i$, 
$(a_i)_i \leq (b_i)_i \iff a_i \leq b_i, \forall i \in I$) is a quantale. 
Define $\bigvee_{j \in I} (a_{ij})_i = (\bigvee_{j \in I}a_{ij})_i$, $\bigwedge_{j \in I} (a_{ij})_i = (\bigwedge_{j \in I}a_{ij})_i$ and $(a_{i})_i \odot (b_{i})_i = (a_{i} \odot b_{i})_i$. All verifications are straightforward but we will check one of the distributive laws:

\begin{align*}
  \bigvee_{j \in I}(a_i \odot b_{ij})_i = (\bigvee_{j \in I}a_i \odot b_{ij})_i = (a_i \odot \bigvee_{j \in I} b_{ij})_i  = (a_i)_i \odot \bigvee_{j \in I}(b_{ij})_i 
\end{align*}

It is easy to see that $\prod_{i \in I}Q_i$ is a semicartesian/commutative quantale whenever each $Q_i$ is a  semicartesian/commutative quantale.

\item If $Q$ is a commutative semicartesian quantale, it is straitforward to check that given  $l\in Idem(Q)$ and $u \in Q $ such that $l \leq u$, then the subset $[l,u] = \{x \in Q: l \leq x\leq u\}$ is closed under $\bigvee$ and $\odot$, thus it determines a ``interval subquantal" that is also semicartesian and commutative. 
\end{enumerate}

An example of a semicartesian quantale that is not integral is constructed as follow: let $Q$ be a integral and not idempotent quantale. Given $a \in Q \setminus Idem(Q)$,  then the interval $[\bot, a]$ is a non unital semicartesian quantale.

\begin{remark}\label{remark:def_implication}
Every commutative unital quantale $Q$ can be associated to a closed monoidal symmetric\footnote{A monoidal category is category equipped with a tensor functor and satisfies coherence laws given by commutative diagrams; the symmetry establishes a kind of commutativity for the tensor; and closed means that there is an isomorphism $Hom(a \otimes c, b) \cong Hom(c, b^a)$, and under such bijection the arrow $ev \in Hom(a \otimes b^a, b)$ that corresponds to $id \in Hom(b^a, b^a)$.} poset category $Q$, where exists a unique arrow in $Hom(a,b)$ iff $a\leq b$. Note that the product $\prod$, coproduct $\coprod$, and tensor $\otimes$ are defined, respectively, by the infimum, $\bigwedge$, the supreme $\bigvee$, and the dot $\odot$ and the ``exponential'' is given by $b^a = \bigvee\{c \in Q : a \odot c \leq b\}$, where $b^a$ is an alternative notation for $a \rightarrow b$.
\end{remark}

Now, we introduce an operation that sends elements of a commutative and semicartesian quantale $Q$ into a idempotent element in the locale $Idem(Q)$. In those conditions we define
   $$ q^{-}   := \bigvee \{p \in Idem(Q) : p\leq q \odot p\} $$
Since $Q$ is semicartesian and commutative, note that $p \leq q \odot p$ iff $p = q \odot p = p \odot q$.

Now, we list properties of $(-)^-: Q \to Idem(Q)$. 

\begin{proposition}\label{prop:properties_of_idempotentaprox}
If $Q$ is a commutative and semicartesian quantale, and $\{q_i : i \in I\} \subseteq Q$, then
\begin{multicols}{2}
\begin{enumerate}
    \item ${0}^- = 0$ and ${1}^- =1$ (if $Q$ is unital)
    \item $q^- \leq q$ 
    \item $q^- \odot q = q^-$ 
    \item $q = q^- \Leftrightarrow q\odot q = q$ 
    \item $q^- \odot q^- = q^-$ 
    \item $q^- = max\{ e \in Idem(Q) : e \leq q \}$  
    \item $q^{-{^-}} = q^-$
    \item  $p \leq q$ and $x \odot p = x $, then $x \odot q = x$
    \item $p \leq q$ $\Rightarrow$ $p^- \leq q^-$  
     $\Leftrightarrow$ $p^- \odot q^- = p^-$ 
    \item $(a\odot b)^- = a^-\odot b^-$
    \item ${q_j}^- \odot \bigvee_{i \in I} q_i = {q_j}^-$ 
    \item ${\bigvee_{i} q_i^-} \leq (\bigvee_{i} {q_i})^-$
\end{enumerate}
\end{multicols}
\end{proposition}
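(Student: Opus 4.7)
The plan is to prove the twelve items in order, since each one feeds into later ones, and to exploit two recurring moves throughout. First, semicartesianness gives $x \odot y \leq x \wedge y$ (Remark~\ref{quantale-re}(1)). Second — call it the \emph{absorption} move — whenever $e$ is idempotent and $e \leq q$, one has $e = e \odot q$, because $e = e \odot e \leq e \odot q \leq e$ by monotonicity and semicartesianness. Together with commutativity, these two facts drive almost every step.

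I would first handle (1)--(5), which pin down the shape of $q^-$. Item (1) is immediate. For (2), each $p$ in the defining set satisfies $p \leq q \odot p \leq q$, so the supremum stays below $q$. For (3), semicartesianness promotes $p \leq q \odot p$ to $p = q \odot p$, and distributing over the supremum gives $q^- \odot q = q^-$. The nontrivial step is (5), idempotency of $q^-$: I would show the defining set is closed under $\odot$ (commutativity makes $p \odot p'$ idempotent when $p,p'$ are, and it remains in the set since $(p\odot p')\odot q = p \odot (p' \odot q) = p \odot p'$), which yields $q^- \odot q^- \leq q^-$ via distributivity; the reverse inequality follows because each $p \leq q^-$ is itself idempotent, so $p = p \odot p \leq q^- \odot q^-$. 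Item (4) is then immediate: if $q \odot q = q$, then $q$ lies in its own defining set, so $q \leq q^-$, and (2) gives equality; the converse uses (5).

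The middle block (6)--(9) reframes $q^-$ as the largest idempotent $\leq q$. For (6), any idempotent $e \leq q$ satisfies $e = e \odot q$ by absorption, so $e$ lies in the defining set; combined with (2) and (5) this yields the $\max$ description. Item (7) is (6) applied to $q^-$ itself; (8) is another direct instance of absorption, namely $x = x \odot p \leq x \odot q \leq x$; and for (9), $p^- \leq p \leq q$ with $p^-$ idempotent gives $p^- \leq q^-$ by (6), while the equivalence with $p^- \odot q^- = p^-$ is absorption read in both directions on the idempotents $p^-, q^-$.

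The main obstacle is (10), $(a \odot b)^- = a^- \odot b^-$. The $\geq$ direction is easy: $a^- \odot b^-$ is idempotent (two commuting idempotents) and satisfies $a^-\odot b^- \leq a \odot b$ by monotonicity, so (6) gives $a^- \odot b^- \leq (a \odot b)^-$. For the reverse direction I would use (6) twice: $(a \odot b)^- \leq a \odot b \leq a$, so $(a \odot b)^- \leq a^-$, and analogously $(a \odot b)^- \leq b^-$. Two applications of absorption then yield $(a \odot b)^- = (a \odot b)^- \odot a^- = (a \odot b)^- \odot a^- \odot b^- \leq a^- \odot b^-$ by semicartesianness. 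Finally, (11) is (8) applied with $p = q_j$, $q = \bigvee_i q_i$, and $x = q_j^-$, the hypothesis $x \odot p = x$ being exactly (3); and (12) follows by applying the $\max$ characterization (6) to each $q_i^- \leq q_i \leq \bigvee_j q_j$ and then taking the supremum.
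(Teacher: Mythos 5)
Your proposal is correct and follows essentially the same route as the paper: the ``absorption'' move you isolate ($e$ idempotent and $e \leq q$ implies $e = e \odot q$, via $e = e\odot e \leq e\odot q \leq e$) is exactly the computation the paper uses repeatedly (notably in items (6), (8), (9)), and your treatment of the key items (5), (6), and (10) matches the paper's arguments up to minor reshuffling of which earlier item is cited. No gaps.
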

\begin{proof}
\begin{enumerate}
    \item Straightforward.
    
    \item If $e \in Idem(\Q)$ is such that $e \leq q \odot e$, then $e \leq q \odot 1  = q$. Thus, by the definition of $q^-$ as a least upper bound,  $q^- \leq q$.
    
    \item Since multiplication distributes over arbitrary joins,
    $$q^- \odot q = \bigvee\{p \odot q \,|\, p = q\odot p, p \in Idem(Q)\} = \bigvee\{p \in Idem(Q)\,|\, p = q\odot p\} = q^-$$
    
    \item $(\Rightarrow)$ From the previous item.
    
    $(\Leftarrow)$ By maximality, $q \leq q^-$, thus the result follows from item $(2)$.
    
    \item Since multiplication distributes  over arbitrary joins,
     \begin{align*}
         q^- = q^- \odot 1 \geq q^- \odot q^- &= \bigvee\{p \odot q^-  Idem(Q)\,|\, p \in  Idem(Q), p = q\odot p\}\\
         &= \bigvee\{p \odot p' \,|\, p, p' \in Idem(Q) , p = q\odot p, p' = q \odot p' \}\\
         &\geq \bigvee\{p \odot p \,|\, p \in Idem(Q) , p = q\odot p  \} \\
         &=  \bigvee\{p \,|\, p \in Idem(Q) , p = q\odot p  \} \\
         &=  q^-
     \end{align*}

    \item By items $(2)$ and  $(5)$, $q^- \in \{e \in Idem(Q): e \leq q\}$. If $e \in Idem(Q)$ is such that $e \leq q$, then $ e = e \odot e \leq q \odot e  \leq 1 \odot e = e$, thus $e = e \odot q$; then $e \leq q^-$, by the definition of $q^-$ as a l.u.b.

    \item By item $(2)$, $q{^{-^{-}}} \leq q^-$. On the other hand, by items $(4)$ and $(5)$ and maximality of ${q^-}^-$, we have $q^- \leq q{^{-^{-}}} $. 
 
  \item Since $ x = x \odot p \leq x \odot q \leq x \odot 1 = x$.
    
    \item    Suppose $p \leq q$. Then by items $(3)$ and $(8)$, $p^- \odot q = p^-$. 
    
    By item $(5)$, $p^- \in Idem(Q)$ and, by maximality of $q^-$, $p^- \leq q^-$.
    
    Since $p^-, q^- \in Idem(Q)$ (item $(5)$) then, by the argument in the proof of item $(6)$, we have $p^- \leq q^-$ iff  $p^- \odot q^- = p^-$.

\item Note that $a^- \odot b^-$ is an idempotent such that $a^- \odot b^- \odot a \odot b = a^- \odot b^-$. 
    So $(a \odot b)^- \geq a^-\odot b^-$.

On the other hand, by item $(9)$, $(a\odot b)^- \odot a^- = (a\odot b)^- = (a\odot b)^- \odot b^-$. Then,   $(a \odot b)^- \odot (a^- \odot b^-) =  ((a \odot b)^- \odot a^-) \odot b^- = (a \odot b)^- \odot b^- = (a \odot b)^-$. Thus $(a \odot b)^- \leq a^- \odot b^-$.

    \item Since $q_j^- = q_j^- \odot q_j \leq q_j^-  \odot  \bigvee_{i \in I} q_i \leq q_j^-$.
    
    \item Since $q_j \leq \bigvee_{i} {q_i}$, from the item $(9)$ we obtain $q_j^- \leq (\bigvee_{i} {q_i})^-$, and then $\bigvee_{j} q_j^- \leq (\bigvee_{i} {q_i})^-$, by sup definition.  
    \end{enumerate}
\end{proof}

\begin{proposition}\label{q-_is_right_adjoint} Consider the maps $i : Idem(\Q) \hookrightarrow \Q$ and $ ( \ )^{-} : \Q \to Idem(\Q)$, then:

\begin{enumerate}
 
\item $(Idem(Q), \bigvee, \odot, 1)$ is a locale and the inclusion map $i:Idem(Q) \hookrightarrow Q$ preserves $\odot$, sups and $\top$.

\item The map $ ( \ )^{-} : \Q \to Idem(\Q)$ preserves $\odot$ and $\top$.

\item The adjunction relations (for posets) holds for each $e \in idem(\Q)$ and $q \in \Q$

$$Hom_{\Q}(i(e), q) \cong Hom_{idem(\Q)}(e, q^-)$$

\end{enumerate}

\end{proposition}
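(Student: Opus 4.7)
The plan is to dispatch the three parts in order, leaning on Proposition \ref{prop:properties_of_idempotentaprox} for most of the work.

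For part (1), I would first check that $Idem(\Q)$ is closed under $\odot$ and arbitrary joins of $\Q$, and contains $1$. Closure under $\odot$ uses commutativity and idempotency: $(e \odot e') \odot (e \odot e') = (e \odot e) \odot (e' \odot e') = e \odot e'$. For closure under sups, I would compute $(\bigvee_i e_i) \odot (\bigvee_j e_j) = \bigvee_{i,j} (e_i \odot e_j)$; semicartesianness gives $e_i \odot e_j \le e_i \le \bigvee_k e_k$, yielding one inequality, while $e_i = e_i \odot e_i$ yields $\bigvee_i e_i \le \bigvee_{i,j}(e_i \odot e_j)$, so $\bigvee_i e_i \in Idem(\Q)$. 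Since $\Q$ is integral (commutative, semicartesian, unital), $1 = \top$ is trivially idempotent. Then $(Idem(\Q), \le, \odot, 1)$ is a commutative, semicartesian, unital quantale in which every element is idempotent; Proposition \ref{prop:when_quantale_is_locales} identifies it as a locale. The inclusion $i$ preserves $\odot$ and $\top = 1$ by construction, and preserves sups by the closure computation just made.

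Part (2) is essentially a restatement of properties already established in Proposition \ref{prop:properties_of_idempotentaprox}: item (10) gives $(a \odot b)^- = a^- \odot b^-$, so $(-)^-$ preserves $\odot$; and item (1) together with integrality ($\top = 1$) gives $\top^- = 1^- = 1 = \top$.

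For part (3), the claimed natural isomorphism of Hom-posets reduces to the equivalence
\[
i(e) \le q \text{ in } \Q \quad \Longleftrightarrow \quad e \le q^- \text{ in } Idem(\Q),
\]
for $e \in Idem(\Q)$ and $q \in \Q$. The forward direction follows immediately from item (6) of Proposition \ref{prop:properties_of_idempotentaprox}: if $e \le q$ and $e$ is idempotent, then $e$ belongs to the set whose maximum is $q^-$, so $e \le q^-$. The reverse direction is item (2): $e \le q^- \le q$. No part of the argument is genuinely difficult; the only mild obstacle is being careful that sups in $Idem(\Q)$ are computed as sups in $\Q$ (the content of the closure step in part (1)), which is what makes the adjunction and the locale structure simultaneously coherent.
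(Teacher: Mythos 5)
Your proposal is correct and follows essentially the same route as the paper: closure of $Idem(\Q)$ under sups and $\odot$ plus the semicartesian-idempotent-implies-locale fact for part (1), items (1) and (10) of Proposition \ref{prop:properties_of_idempotentaprox} for part (2), and the order-equivalence $i(e)\leq q \iff e \leq q^-$ for part (3). Your reverse implication in part (3), via $e \leq q^- \leq q$ and transitivity, is in fact slightly more direct than the paper's detour through items (4) and (9).
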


\begin{proof}
\begin{enumerate}
    \item The sup of a set of idempotents is an idempotent (in the same vein of the proof of item $(5)$ in the previous proposition). If $f, e, e' \in Idem(Q)$, then $e \odot e' \leq e, e'$, since $Q$ is semicartesian. Moreover, if $f \leq e, e'$, then $f = f \odot f \leq e \odot e'$. Thus $e \odot e'$ is the g.l.b. of $e, e'$ in $Idem(Q)$. The other claims are straightforward.

    \item This is contained in items $(1)$ and $(10)$ of the previous proposition.

    \item  Since we are dealing with posets, it is enough to show that, for each $q \in Q, e \in Idem(Q)$, 
    $$i(e) \leq q \iff e \leq q^-$$ 
    
    If $i(e) \leq q$, then by item $(6)$ in the previous proposition $e \leq q^-$.
    
    On the other hand, if $e \leq q^-$, then by item $(4)$ and the equivalence in the $(9)$ in the previous proposition $e = e \odot q^-$. Then, by item $(2)$, $e \leq e \odot q \leq q$. 
    
\end{enumerate}

\end{proof}

\begin{definition}
Let $Loc$ be the category of locales morphism that preserves finitary infs and arbitrary sups, and   $CSQ$ the category of commutative semicartesian quantales and morphism that preserves sups and $\top$ satisfying that  $f(a \odot b) \geq f(a) \odot f(b)$.

\end{definition}
 
The next proposition is analogous to the previous one, but for the category $CSQ$ instead of the poset category of commutative and semicartesian categories.

\begin{proposition} Consider inclusion functor $\iota: Loc \hookrightarrow CSQ$. Then:

\begin{enumerate}
 
 \item The inclusion functor $\iota: Loc \hookrightarrow CSQ$ is full and faitful.
 
\item  $\Q \mapsto Idem(\Q)$ determines the right adjoint of the inclusion functor $\iota: Loc \hookrightarrow CSQ$, where   the inclusion $i_Q: \iota (Idem(\Q)) \hookrightarrow \Q$,  is a component of the co-unity of the adjunction.
\end{enumerate}
\end{proposition}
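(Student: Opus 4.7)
The plan is to prove the two parts separately, leaning on Proposition \ref{q-_is_right_adjoint} and Proposition \ref{prop:properties_of_idempotentaprox} throughout.

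For (1), faithfulness is immediate since $\iota$ is the identity on underlying functions. For fullness, given any $f \in Hom_{CSQ}(\iota L, \iota L')$ between locales, I would show $f$ preserves binary meets. By Proposition \ref{prop:when_quantale_is_locales}, in a locale $\odot = \wedge$, so the $CSQ$ inequality translates to $f(a \wedge b) \geq f(a) \wedge f(b)$; monotonicity (a consequence of sup preservation) gives $f(a \wedge b) \leq f(a) \wedge f(b)$, hence equality. Preservation of $\top$ is part of the $CSQ$ hypothesis, and preservation of arbitrary sups is automatic, so $f$ qualifies as a locale morphism.

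For (2), I would first upgrade $Q \mapsto Idem(Q)$ to a functor $CSQ \to Loc$ by setting $Idem(f)(e) := f(e)^-$ on morphisms; the routine verifications that this is a frame morphism and that functoriality holds use items (1), (6), (9), (10), (12) of Proposition \ref{prop:properties_of_idempotentaprox} together with the description of meets in $Idem(Q)$ from Proposition \ref{q-_is_right_adjoint}(1). Next I would take the unit $\eta_L : L \to Idem(\iota L) = L$ to be the identity, valid because every element of a locale is idempotent so $Idem(\iota L)$ is canonically $L$, and the counit $\epsilon_Q := i_Q$ as specified by the statement. The triangle identities then collapse to tautologies: on the one hand $Idem(i_Q)(e) = i_Q(e)^- = e^- = e$ for $e \in Idem(Q)$, and on the other hand $i_{\iota L}$ is the identity on $L$.

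To close the argument I would exhibit the hom-set bijection $Hom_{CSQ}(\iota L, Q) \cong Hom_{Loc}(L, Idem(Q))$ explicitly via $g \mapsto \bar g$ with $\bar g(a) := g(a)^-$, and inverse $h \mapsto i_Q \circ \iota(h)$, mirroring the poset-level adjunction of Proposition \ref{q-_is_right_adjoint}(3). The \textbf{main obstacle} is checking that $g \mapsto \bar g$ lands in $Hom_{Loc}(L, Idem(Q))$: preservation of $\top$ and binary meets follows cleanly from items (1), (6), and (10) combined with the $CSQ$ inequality for $g$, but preservation of arbitrary sups requires strengthening item (12) to an equality $(\bigvee_i g(a_i))^- = \bigvee_i g(a_i)^-$. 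The hypothesis that the domain $L$ is a locale should be decisive here: every $a_i \in L$ is idempotent, so the analysis can be pulled back to $Idem(Q)$ via the pointwise adjunction of Proposition \ref{q-_is_right_adjoint}(3), where sups in $Idem(Q)$ coincide with those in $Q$.
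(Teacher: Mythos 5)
Your part (1) matches the paper's argument and is fine. The problem is in part (2). Both your unit/counit route and the paper's direct verification of the counit's universal property hinge on one fact that you never establish: \emph{every} $CSQ$-morphism $g : \iota(L) \to \Q$ out of a locale takes values in $Idem(\Q)$. The paper gets this from $g(a) = g(a \wedge a) = g(a) \odot g(a)$ and then simply corestricts $g$ to $Idem(\Q)$, after which nothing remains to check: sups of idempotents are idempotent (Proposition \ref{q-_is_right_adjoint}(1)), so sup-preservation is inherited verbatim and your ``main obstacle'' never arises. Without this fact your construction does not merely have a hole at the sup-preservation step --- it fails to be a bijection at all. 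If some $g(a)$ were not idempotent, the composite $g \mapsto \bar g \mapsto i_Q \circ \iota(\bar g)$ returns $a \mapsto g(a)^-$, which differs from $g$ (by Proposition \ref{prop:properties_of_idempotentaprox}(4), $g(a)^- = g(a)$ iff $g(a)$ is idempotent), so your two maps would not be mutually inverse and $i_Q$ could not satisfy the universal property of a counit. In other words, the idempotency of the values of $g$ is not an optional shortcut; it is logically forced by the statement, and any proof must supply it.

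Your proposed resolution of the sup-preservation obstacle also conflates two different things: you invoke that ``every $a_i \in L$ is idempotent,'' but what is needed is that the \emph{images} $g(a_i) \in \Q$ are idempotent. The pointwise adjunction of Proposition \ref{q-_is_right_adjoint}(3) relates $i(e) \leq q$ to $e \leq q^-$ and does not yield $(\bigvee_i q_i)^- = \bigvee_i q_i^-$ for an arbitrary family; indeed the paper exhibits non-geometric quantales (Example \ref{ex:non_geometric_quantale}) where this equality fails, so no general strengthening of Proposition \ref{prop:properties_of_idempotentaprox}(12) is available. Once you add the missing lemma that $g(a) \in Idem(\Q)$ for all $a \in L$, your $\bar g$ collapses to $g$ itself (corestricted), your functor $Idem(f)(e) = f(e)^- = f(e)$ becomes plain restriction on morphisms out of locales, and the rest of your outline goes through --- at which point it is essentially the paper's proof in adjunction-theoretic clothing.
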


\begin{proof}
\begin{enumerate}
    \item Recall that a locale is a commutative semicartesian quantale where $\odot = \wedge$. It is clear that $\iota$ is a well-defined and faithful functor.   Let $L, L'$ be locales and  $f : \iota(L) \to \iota(L')$ be a $CSQ$-morphism: we must show that it preserves finitary infs. By hypothesis, $f(a \wedge b) \geq' f(a) \wedge' f(b)$. On the other hand, since $f$ preserves sups it is increasing and then  $f(a \wedge b) \leq' f(a) \wedge' f(b)$. Thus $f$ preserves binary infs and $\top$ (by hipothesis). Thus it preserves finitary infs.
    
    \item By item (1) in the previous proposition, $i_Q: \iota (Idem(\Q)) \hookrightarrow \Q$ is a morphism that preserves sups, $\top$ and $\odot$.

Let $\HH$ be a locale and $f : \iota(\HH) \to \Q$   be a $CSQ$-morphism. Let  $a \in \HH$, then $f(a) = f(a \wedge a)  = f(a) \odot f(a)$,  thus $f(a) \in Idem(\Q)$. 
Moreover, $f_| : \HH \to Idem(\Q)$ is a locale morphism, by item  $(1)$. Since $i_Q$ is injective $f_|$ is the unique locale morphism $\HH \to Idem(Q)$ such that $i_Q \circ \iota(f_|) = f$.
\end{enumerate}
\end{proof}
Observe that the last item follows the same idea of  Lemma 2.2 in \cite{nlab:quantale}.

\begin{definition}
A set o elements $\{q_i : i \in I\}$ of $Q$ is a \textbf{partition of $q \in Q$} if $\bigvee_{i \in I} q_i = q$ and $q_i \odot q_j =0$, for each $i \neq j$.
\end{definition}

It is clear that if $\{q_i : i \in I\}$ is a partition of $q$, then $\{q_i \odot a : i \in I\}$ is a partition of $a \odot q$, for any $a \in Q$. Thus, every partition of unity determines a partition for any $q \in Q$.

\begin{example}
For a commutative ring $A$, any ideal $I$ has a partition: Take an idempotent $e \in A$ and observe that $1 = e + 1 - e$. We have $(1) = A$ (the unity), $(e + 1 - e) = (e)+(1-e)$ and $(e)\odot(1-e) = (e)\cap(1-e)=0$. So $\{(e),(1-e)\}$ is a partition of $(1)$ and from it we  obtain a partition $\{(e)\odot I,(1-e)\odot I\}$ for any ideal $I$. If $A$ only have trivial idempotents then the ideals admit trivial partition.
\end{example}

Next we explore other properties of the construction $q \mapsto q^-$ in a more specific class of quantales.

\begin{definition}
We say that a (commutative, semicartesian) quantale $Q$ is: 
\begin{enumerate}
\item  An \textbf{Artinian quantale} if each infinite descending chain $q_0 \geq q_1 \geq q_2 \geq q_3 \geq  \dots$, stabilizes for some natural number $n \in \mathbb{N}$, which may vary according to the chain.
    \item  A \textbf{p-Artinian quantale} if for each $q \in Q$, the infinite descending chain of powers of $q$, $q^1 \geq q^2 \geq q^3 \geq  \dots$, stabilizes for some natural number $n \in \mathbb{N}\setminus \{0\}$, which may vary according to the chain.  
    \item If there is a natural number $n \geq 1$ such that for all $q \in Q$ we have $q^n = q^{n+1}$, then we say that $Q$ is \textbf{uniformly p-Artinian}. The least $n\in \mathbb{N}$ such that, for each $q \in Q$, $q^{n+1} = q^n$ is called the {\bf degree of Q}.
    \end{enumerate}
\end{definition}

The following results are straightforward.

\begin{remark} Let $Q$ be a commutative  and semicartesian quantale. 

\begin{enumerate}
    \item If $Q$ is Artinian or  uniformly p-Artinian, then $Q$ is p-Artinian.
    \item If $Q$ is a p-Artinian quantale, $q \in Q$ and $q^{n+1} = q^n$, then $q^- = q^n$.
\end{enumerate}

\end{remark}

The example that motivates such terminology is  the set of ideals of an Artinian commutative unitary ring.  Concerning this example, we add the following

\begin{proposition}
Let $A$ be a commutative unitary ring and consider $Q=\mathcal{I}(A)$ be its quantale of all ideals. Consider:
\begin{enumerate}
    \item $\mathcal{I}(A)$ is p-Artinian;
    \item For each $a \in A$, there is $n \in \mathbb{N}$ such that $(a)^n = (a)^{n+1}$;
    \item For each $a \in A$, there is $n \in \mathbb{N}$ and $b \in A$ that $a^n = b. a^{n+1}$. This means that $A$ is strongly $\pi$-regular.%A ring R is called strongly π-regular if for every a in R there exist a positive integer n, depending on a, and an element b in R satisfying a^n=a^{n+1}b.
    \item Each prime proper ideal of $A$ is maximal;
    \item $A/nil(A)$ is a von Neumann regular ring;

Then we have the following implications
$$1 \implies 2 \iff 3 \implies 4 \iff 5.$$
    Moreover, if $A$ is a reduced ring (i.e. $nil(A) = \{0\}$), then all items above are equivalent between them and also are equivalent to
    \item  $\mathcal{I}(A)$ is a uniformly p-Artinian quantale of degree 2.
\end{enumerate}
\end{proposition}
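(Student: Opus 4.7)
The plan is to chain together the five stated implications, leaning on the identity $(a)^n = (a^n)$ valid in any commutative ring to pass between ideal-theoretic and element-theoretic statements, and to invoke the classical characterization of reduced zero-dimensional rings for the equivalence of items $4$ and $5$.

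First, $1 \Rightarrow 2$ is immediate: I apply the p-Artinian hypothesis to $q = (a)$ to stabilize the chain of powers and obtain some $n$ with $(a)^n = (a)^{n+1}$. For $2 \iff 3$ I rewrite $(a)^k = (a^k)$ and observe that $(a^n) = (a^{n+1})$ is equivalent (since $a^{n+1} \in (a^n)$ automatically) to $a^n \in (a^{n+1})$, i.e., $a^n = b\, a^{n+1}$ for some $b \in A$. For $3 \Rightarrow 4$, given a prime $P$ and a nonzero class $\bar a \in A/P$, I choose a lift $a \notin P$ and apply item $3$ to obtain $a^n(1 - ab) = 0$; passing to the integral domain $A/P$ and using that $a^n \notin P$, this forces $1 - ab \in P$, so $\bar b$ inverts $\bar a$, and $A/P$ becomes a field, making $P$ maximal.

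The equivalence $4 \iff 5$ rests on the bijective correspondence between primes of $A$ and primes of $A/nil(A)$ (every prime contains $nil(A)$), which preserves maximality, so ``every prime is maximal'' transfers between the two rings. Combined with this I invoke the classical characterization: a reduced commutative ring is von Neumann regular if and only if it has Krull dimension zero. For the latter, my strategy is to localize at each prime $P$ and note that $(A/nil(A))_P$ is reduced, local, and zero-dimensional, so its maximal ideal is simultaneously nil and zero, hence it is a field; then I appeal to the fact that von Neumann regularity is a local property. This equivalence is the main technical step of the proof.

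Finally, for $A$ reduced, $A = A/nil(A)$ is itself von Neumann regular under items $4$ or $5$. Then for any ideal $I$ and any $a \in I$, the relation $a = aba$ rewrites as $a = (ab) \cdot a$ with $ab \in I$, so $a \in I \cdot I$ and hence $I = I^2$. Thus every ideal is idempotent, making $\mathcal{I}(A)$ uniformly p-Artinian and establishing $5 \Rightarrow 6$; since $6 \Rightarrow 1$ is trivial from the definitions, all six items become equivalent in the reduced case. The main obstacle in the whole argument is the reduced-plus-dim-zero characterization of von Neumann regular rings used in $4 \iff 5$; everything else is bookkeeping with the identity $(a)^n = (a^n)$.
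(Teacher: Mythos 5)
Your proof is correct and, for the chain $1 \Rightarrow 2 \iff 3 \Rightarrow 4$, essentially identical to the paper's (definition of p-Artinian applied to $q=(a)$; the identity $(a)^n=(a^n)$; the computation $a^n(1-ba)=0$ forcing $1-ba\in P$ so that $A/P$ is a field). The differences lie at the two ends. For $4 \iff 5$ the paper simply cites \cite[Exercise 4.15]{lam2006exercises}, whereas you supply the standard proof: the prime correspondence between $A$ and $A/nil(A)$, plus the characterization ``reduced and Krull dimension zero iff von Neumann regular'' obtained by localizing at primes (a reduced, local, zero-dimensional ring has maximal ideal equal to its nilradical, hence zero) and using that regularity is a local property; this makes the argument self-contained at the cost of length. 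For the reduced case your route is genuinely different and arguably tighter: you show that in a von Neumann regular ring \emph{every} ideal satisfies $I=I^2$ (via $a=(ab)\cdot a\in I\cdot I$), which gives $5\Rightarrow 6$ directly, and together with the trivial $6\Rightarrow 1$ closes the full cycle of equivalences. The paper instead argues $5\Rightarrow 2$ by writing each principal ideal as $(e)$ for an idempotent $e$, and only notes $6\Rightarrow 1$; it never explicitly establishes an implication \emph{into} item $6$, so your version actually completes a step the paper leaves implicit. One small caveat: since your argument shows all ideals are already idempotent, the ``degree'' in the sense of the paper's definition (the \emph{least} $n$ with $q^{n+1}=q^n$ for all $q$) comes out as $1$ rather than $2$; this is an imprecision in the statement itself (the paper's own argument has the same feature), not a defect of your proof, but it would be worth a remark that you are proving something slightly stronger than item $6$ as literally phrased.
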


  \begin{proof}
$1 \implies 2:$ By the definition of p-Artinian.

$2 \iff 3:$ Straightforward.

$3 \implies 4:$ Let $P$
be a prime proper ideal and take $a \notin P$. By $3$, there is $n \in \mathbf{N}$ and $b \in A$ such that $a^n - ba^{n+1} = 0$. So $a^n(1-ba) = 0$. Since $a^n \notin P$ and $P$ is prime, we have that $a^n(1-ba) \in P$. Then $1 \in P + Ra$ and we obtain $A = P + Aa$. In other words, every non-zero element in  $A/P$ is invertible, which means that $A/P$ is field and therefore $P$ is maximal.

$4 \iff 5$ This is stated in \cite[Exercise 4.15]{lam2006exercises}, where Krull dimension $0$ means precisely that  all prime ideals are maximal ideals.

Now, suppose that $\mathcal{I}(A)$ is uniformly $p$-Artinian. This gives that $\mathcal{I}(A)$ is $p$-Artinian and so we do not have verify $1$. We conclude the sequence of implications by showing that $5$ implies $2$: we use that a ring is von Neumann regular iff every principal left ideal is generated by an idempotent element. Since $A$ is commutative and $A/nil(A) = A$ is von Neumann regular, for each $a \in A = A$, $(a) = (e)$ for some idempotent $e \in A$. Therefore, $(a)^n = (e)^n  = (e)^{n+1} = (a)^{n+1} $    

\end{proof}

\begin{example}
Since $A$ is a strongly (von Neumann) regular ring if and only if $A$ is a reduced regular ring \cite[Remark 2.13]{rege1986neumann},
 any reduced regular ring satisfies condition $3$ (every regular ring is $\pi$-regular) so $\mathcal{I}(A)$ is an example of a uniformly p-Artinian quantale of degree 2. 
\end{example}
\begin{remark}
For comutative rings, strongly von Neumann regular is equivalent to von Neumann regular.
\end{remark}

\begin{proposition}
If $Q$ is a uniformly p-Artinian quantale, and $(I, \leq)$ is an upward directed poset, then $(\bigvee_{i \in I} q_i^-) = (\bigvee_{i \in I} q_i)^-$.
\end{proposition}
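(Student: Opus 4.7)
The inequality $\bigvee_{i} q_i^- \leq (\bigvee_{i} q_i)^-$ is already given by item $(12)$ of Proposition \ref{prop:properties_of_idempotentaprox}, so the plan is to establish the reverse inequality $(\bigvee_i q_i)^- \leq \bigvee_i q_i^-$. The key leverage is that in a uniformly p-Artinian quantale of degree $n$, the operation $(-)^-$ can be computed explicitly as an $n$-th power: by item $(2)$ of the preceding remark, $x^- = x^n$ for every $x \in Q$.

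The next step is to expand the $n$-th power using the distributive laws of the quantale. Setting $q := \bigvee_{i \in I} q_i$, I would write
\begin{align*}
q^- \;=\; q^n \;=\; \Bigl(\bigvee_{i \in I} q_i\Bigr)^n \;=\; \bigvee_{(i_1,\dots,i_n) \in I^n} q_{i_1} \odot q_{i_2} \odot \cdots \odot q_{i_n}.
\end{align*}
Now I invoke the assumption that $(I, \leq)$ is upward directed together with the standing convention that $\{q_i\}_{i\in I}$ is a monotone family (otherwise the hypothesis is vacuous for the conclusion). Given any finite tuple $(i_1, \dots, i_n) \in I^n$, directedness provides some $j \in I$ with $j \geq i_k$ for all $k$, and monotonicity gives $q_{i_k} \leq q_j$. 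Since $\odot$ is increasing in each entry (Remark \ref{remark_quantale_properties}), we conclude
\begin{align*}
q_{i_1} \odot \cdots \odot q_{i_n} \;\leq\; q_j \odot \cdots \odot q_j \;=\; q_j^n \;=\; q_j^-.
\end{align*}

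Taking joins over all tuples, $q^- \leq \bigvee_{j \in I} q_j^-$, which combined with item $(12)$ yields the desired equality. The main obstacle is conceptual rather than technical: one must recognize that without upward directedness the cross-terms $q_{i_1}\odot\cdots\odot q_{i_n}$ in the expansion of $q^n$ cannot be bounded by any single $q_j^-$, so directedness is exactly the ingredient that makes the expanded supremum collapse into the join of the $q_j^-$. The uniform bound on the exponent $n$ is also essential, since it allows us to use the \emph{same} $n$ for all indices simultaneously; without uniformity, different $q_j$ would require different exponents and the monotonicity argument above would fail.
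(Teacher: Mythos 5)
Your proof is correct and follows essentially the same route as the paper: use the degree $n$ to write $(\bigvee_i q_i)^- = (\bigvee_i q_i)^n$, expand by distributivity into a join of products $q_{i_1}\odot\cdots\odot q_{i_n}$, and use directedness to dominate each such product by some $q_j^n = q_j^-$. Your explicit remark that the family must be taken as monotone (so that directedness of $I$ yields $q_{i_1},\dots,q_{i_n}\leq q_j$) only makes precise an assumption the paper's proof uses implicitly.
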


  \begin{proof}
The relation $(\bigvee_{i \in I} q_i^-) \leq (\bigvee_{i \in I} q_i)^-$ holds in general.

Now suppose that the degree of $Q$ is $n \in \mathbb{N}$. Then 
$$(\bigvee_{i \in I} q_i)^- = (\bigvee_{i \in I} q_i)^n = \bigvee_{i_1, \cdots i_n \in I} q_{i_1} \odot \cdots \odot q_{i_n}.$$
But, since $(\bigvee_{i \in I} q_i)$ is an upward directed sup, for each $i_1, \cdots, i_n \in I,$ there is $j \in I$ such that $q_{i_1}, \cdots, q_{i_n} \leq q_j$ then 
$$\bigvee_{i_1, \cdots, i_n \in I} q_{i_1} \odot \cdots \odot q_{i_n} \leq \bigvee_{j\in I} q_{j}^n = (\bigvee_{j \in I} q_j^-).$$
\end{proof}

Now, observe that the equality $\bigvee_{i\in I} q_i^- = (\bigvee_{i\in I} q_i)^-$ holds, in general (i.e. for each sup) for any locale, but not for any quantale.

\begin{example}\label{ex:non_geometric_quantale}
If $Q = \mathbb{R}_+ \cup \{\infty\}$ is the extended half-line presented in \ref{ex_quantales}, then all elements of $Q$ are in the interval $[0,\infty]$.  There are only two idempotent elements in this quantales, $0$ and $\infty$. 
Since, in this case, the supremum is the infimum, and $0 \geq 0 + q$ if and only if $q = 0$, we have 
\[
q^- = \begin{cases}
0, \mbox{ if } q=0, \\
\infty, \mbox{ if } q \in (0,\infty]
\end{cases}
\]
So, for a subset $\{q_i : q_i \neq 0, \forall i \in I\} \subseteq Q$ then $\bigvee_{i\in I}(q_i^-) = \infty$ but $(\bigvee_{i\in I}q_i)^- $ may be zero or $\infty$ depending if the supremum (which is the infimum in the usual ordering) of $q_i$'s is zero or not.
\end{example}

Since some but not all quantales satisfies such equality, we provide a name for it.%\footnote{See also Example \ref{strgeomorp-ex}.}.
\begin{definition}
Let $Q$ be a commutative semicartesian quantale, we say $Q$ is a \textbf{geometric quantale} whenever 
$\bigvee_{i\in I} q_i^- = (\bigvee_{i\in I} q_i)^-$, for each $\{q_i : i \in I\} \subseteq Q$.
\end{definition}

Locales satisfy, trivially, this geometric condition. Moreover  

\begin{example}
The extended natural numbers presented in \ref{ex_quantales} is a geometric quantale. We argue in \ref{ex:non_geometric_quantale} that the extended positive real numbers is not a geometric quantale because $(\bigvee_{i\in I}q_i)^-$ could be zero. However, we only have  $(\bigvee_{i\in I}q_i)^- = \infty$ since we are considering the subset $\{q_i : q_i \neq 0, \forall i \in I\} \subseteq \mathbb{N}\cup\{\infty\}$.
\end{example}

Note that the poset of all ideals of a PID {\em is not} a geometric quantale. In particular, $(\mathbb{N}, \cdot, \sqsubseteq)$, where $a \sqsubseteq b$ iff $b \mid a$, is not a geometric quantale. 

We choose such terminology to indicate that under those conditions the operation $(-)^-$ is a \textit{strong geometric morphism}, whose definition we provide in Section \ref{sec:comparing_with_Groth_topos}.

Moreover, we may construct geometric quantales from others geometric quantales:

\begin{proposition} The subclass of geometric quantales is closed under arbitrary products and interval construction.
\end{proposition}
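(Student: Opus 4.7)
The plan is to treat the two constructions separately; in both cases the strategy is to show that the operation $(-)^-$ is computed ``pointwise'' with respect to the construction, after which the geometric identity reduces directly to the geometric condition on the factor(s). The paper has already shown that both constructions preserve commutativity and semicartesianness, so only the geometric identity remains to be checked.

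For the product $Q = \prod_{j \in J} Q_j$ of geometric quantales, I would first observe that a tuple $(p_j)_j$ is idempotent iff each $p_j \in Idem(Q_j)$, since the product multiplication is componentwise. The key step is the pointwise formula $(q_j)_j^- = (q_j^-)_j$: the inequality $\leq$ is immediate because any idempotent tuple $(p_j)_j$ with $(p_j)_j \leq (q_j \odot p_j)_j$ satisfies $p_j \leq q_j^-$ in each coordinate; for the reverse inequality, given $p \in Idem(Q_j)$ with $p \leq q_j \odot p$, I would embed $p$ into the tuple that equals $p$ at coordinate $j$ and $0$ elsewhere, observe that this tuple is idempotent and qualifies in the definition of $(q_j)_j^-$ (using $p \odot 0 = 0$ and $0 \in Idem(Q_k)$ in all other coordinates), and read off that $p$ is at most the $j$-th coordinate of $(q_j)_j^-$. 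Since joins in the product are also componentwise, the geometric identity for $Q$ reduces on each coordinate $j$ to the geometric identity in $Q_j$.

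For the interval $[l,u]$ with $l \in Idem(Q)$ and $l \leq u$, write $(-)^{\dagger}$ for the operator computed internally in $[l,u]$. The central claim is that $q^{\dagger} = q^-$ for every $q \in [l,u]$, where the right-hand side is computed in $Q$. By Proposition \ref{prop:properties_of_idempotentaprox}(2), $q^- \leq q \leq u$, while $l \in Idem(Q)$ together with $l \leq q$ and Proposition \ref{prop:properties_of_idempotentaprox}(6) give $l \leq q^-$; hence $q^- \in Idem([l,u])$ and it remains the maximum idempotent of $[l,u]$ below $q$, that is $q^{\dagger}$. Non-empty joins in $[l,u]$ coincide with joins in $Q$, and the empty-join case is covered by $l^{\dagger} = l^- = l$ on both sides, so the geometric identity in $[l,u]$ is exactly the geometric identity in $Q$ restricted to families in $[l,u]$. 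The only delicate point in the whole argument is the embedding trick in the product case; the interval case is essentially just the observation that $(-)^-$ localizes correctly to intervals whose bottom is an idempotent.
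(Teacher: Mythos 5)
Your proof is correct, and on the product case it follows essentially the same route as the paper: both arguments hinge on the componentwise formula $(q_j)_j^- = (q_j^-)_j$ and then push the join through coordinate by coordinate. The paper simply asserts the key step (that the supremum of the set of qualifying idempotent tuples is the tuple of coordinatewise suprema), whereas your zero-padding embedding makes the nontrivial inequality explicit; that is a genuine improvement in rigor, since the paper's chain of equalities quietly uses exactly this fact. The more substantive difference is the interval case: the paper's proof, as written, only treats products and says nothing about intervals, so your argument that $(-)^-$ computed in $[l,u]$ agrees with $(-)^-$ computed in $Q$ (via $l \leq q^- \leq q \leq u$ and the characterization of $q^-$ as the largest idempotent below $q$ from Proposition \ref{prop:properties_of_idempotentaprox}) fills a gap the paper leaves open. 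Your attention to the empty join, where the bottom of $[l,u]$ is $l$ rather than $0$, is also a detail worth having on record; it works precisely because $l$ is assumed idempotent, so $l^- = l$.
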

  \begin{proof}

Given a family of quantales $Q = \{Q_i:  i \in I\}$ the cartesian product  $\prod_{i \in I}Q_i$ with component-wise order is a geometric quantale.

It follows from the fact that $(-)^-$ is component wise. Indeed,
\begin{align*}
    (q_i)_i^- &=  \bigvee \{(p_i)_i \in Idem(Q) \,:\, (p_i)_i \leq (p_i)_i \odot (q_i)_i\}\\
    &=  \bigvee\{(p_i)_i \in Idem(Q) \,:\, p_i \leq p_i \odot q_i, \forall i \in I\} \\
    &=  (\bigvee\{( p_i \in Idem(Q) \,:\, p_i \leq p_i \odot q_i, \forall i \in I\})_i\\
    &= (q_i^-)_i
\end{align*}

Then
\begin{align*}
   \bigvee_{j \in I}(q_{ij})_i^- =   \bigvee_{j \in I}(q_{ij}^-)_i = (\bigvee_{j \in I}(q_{ij}^-)_i = ((\bigvee_{j \in I}q_{ij})^-)_i
\end{align*}
\end{proof}

We defined $(-)^-$ as a supremum and verified it is the best lower idempotent approximation, in the sense that $q^-$ is the maximum of idempotents $e$ such that $e \leq q$ (Proposition \ref{prop:properties_of_idempotentaprox}.6), or equivalently $e \preceq q$ (Remark \ref{quantale-re}.3) . Analogously, we are tempted to define an operation $(-)^+$ as an infimum and obtain that $q^+$ is the minimum of idempotents $e$ such that $q \leq e$ (or, possibly, $q \preceq e$). For achieve this, we need \textbf{double-distributive quantales}, which are quantales that satisfy the following additional distributive law, for $I \neq \emptyset$: 

$$a \odot (\bigwedge\limits_{i \in I}b_i) = \bigwedge\limits_{i \in I}(a \odot b_i)$$ 
Examples of double-distributive quantales are: locales; the extended half-line $[0,\infty]$ and  the extended natural numbers  $\mathbb{N} \cup \{\infty\}$; a subclass of the quantales of ideals of a commutative and unital ring that is closed under quotients and finite products and contains the principal ideal domains. Besides, double-distributive quantales are closed under arbitrary products and interval construction. 

For members $u \in Q$ these quantales $Q$, there is a more explicit construction of $u^-$ as a transfinite power $u^\alpha$, where $\alpha$ is an ordinal with cardinality $\leq $ cardinality of $Q$.

\begin{proposition} Let $Q$ be a unital double distributive commmutative and semicartesian quantale.
Given $q \in Q$, consider the transfinite chain of powers $(q^\alpha)_{\alpha \geq 1 \ ordinal}$:
$q^1 := q$; $q^{\alpha+1} := q^\alpha \odot q$; if $\gamma \neq 0$ is a limit ordinal, then $q^\gamma := \bigwedge_{\beta< \gamma} q^\beta$
\begin{enumerate}
    \item It is a descending chain;
    \item It stabilizes for some ordinal $\alpha$, $q^\beta = q^{\alpha}$ for each $\beta \geq \alpha$, and $\alpha < \mbox{successor}(card(Q))$;
    \item Moreover, if $q^\alpha = q^{\alpha+1}$, then  $q^- = q^{\alpha}$.
\end{enumerate} 
\end{proposition}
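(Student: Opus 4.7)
The plan is to address each item via transfinite induction. For (1), I show by transfinite induction on $\beta$ that $\alpha \leq \beta$ implies $q^\beta \leq q^\alpha$: the successor step uses semicartesianity, so $q^{\beta+1} = q^\beta \odot q \leq q^\beta$, and the limit step is immediate from the definition as an infimum.

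For (2), I first observe that once the chain becomes constant it remains so: a transfinite induction shows that if $q^{\alpha_0+1} = q^{\alpha_0}$ then $q^\beta = q^{\alpha_0}$ for every $\beta \geq \alpha_0$ (the successor step is immediate; at a limit, entries with index $\geq \alpha_0$ already equal $q^{\alpha_0}$ by the inductive hypothesis, while earlier ones dominate $q^{\alpha_0}$ by (1), so the meet equals $q^{\alpha_0}$). To bound the stabilization point, suppose for contradiction that $q^{\alpha+1} < q^\alpha$ strictly for every $\alpha < \kappa^+ := \text{successor}(\text{card}(Q))$. If $\alpha < \beta < \kappa^+$ satisfied $q^\alpha = q^\beta$, then by the descending property the chain would be constant between them, forcing $q^{\alpha+1} = q^\alpha$, a contradiction. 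Hence $\alpha \mapsto q^\alpha$ embeds $\kappa^+$ into $Q$, contradicting $|\kappa^+| > \text{card}(Q)$.

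For (3), assume $q^{\alpha+1} = q^\alpha$. The central step is to prove by transfinite induction on $\beta \geq 1$ that $q^\alpha \odot q^\beta = q^\alpha$: the base case is the hypothesis; the successor step uses associativity and the base; the limit step invokes double-distributivity, since
\[
q^\alpha \odot q^\gamma = q^\alpha \odot \bigwedge_{\beta < \gamma} q^\beta = \bigwedge_{\beta < \gamma} (q^\alpha \odot q^\beta) = \bigwedge_{\beta < \gamma} q^\alpha = q^\alpha.
\]
Taking $\beta = \alpha$ gives $q^\alpha \odot q^\alpha = q^\alpha$, so $q^\alpha \in Idem(Q)$; combined with $q^\alpha \leq q^1 = q$ from (1), Proposition \ref{prop:properties_of_idempotentaprox}(6) yields $q^\alpha \leq q^-$. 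For the reverse, I prove $q^- \leq q^\beta$ for all $\beta \geq 1$ by transfinite induction: the base is $q^- \leq q$ from Proposition \ref{prop:properties_of_idempotentaprox}(2); the successor step uses $q^- = q^- \odot q^- \leq q^\beta \odot q = q^{\beta+1}$; the limit step is immediate from the infimum. Specializing at $\beta = \alpha$ closes the argument.

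The main obstacle lies in the limit step for $q^\alpha \odot q^\beta = q^\alpha$: without the double-distributive law one cannot commute $\odot$ past the meet, and the identification of $q^\alpha$ with the maximum subordinate idempotent collapses. This is precisely why double-distributivity is assumed in the hypothesis.
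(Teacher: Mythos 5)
Your proof is correct and follows the same overall strategy as the paper: transfinite induction for the descending property, a cardinality/injectivity argument for stabilization below $\mbox{successor}(card(Q))$, and the identification of $q^\alpha$ with the maximum idempotent below $q$ via Proposition \ref{prop:properties_of_idempotentaprox}.(6). Two points in your item (3) differ usefully from the paper's write-up. First, you explicitly carry out the transfinite induction showing $q^\alpha \odot q^\beta = q^\alpha$ for all $\beta \geq 1$, which is exactly what justifies the idempotency $q^\alpha \odot q^\alpha = q^\alpha$; the paper asserts this idempotency directly from $q^{\alpha+1}=q^\alpha$ without spelling out that step, so your version fills in a real elision. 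Second, for the reverse inequality the paper shows that \emph{every} idempotent $p$ with $p \preceq q$ satisfies $p \preceq q^\beta$ for all $\beta$ (a second induction whose limit step again invokes double-distributivity, since $p \odot \bigwedge_\beta q^\beta = \bigwedge_\beta (p \odot q^\beta)$), and then concludes $p \leq q^\alpha$; you instead prove $q^- \leq q^\beta$ directly in the order $\leq$, using only $q^- = q^- \odot q^-$ and monotonicity of $\odot$, so your limit step is the trivial infimum property and double-distributivity is needed only on the forward side. Your route is slightly more economical; the paper's route yields the marginally stronger statement that $q^\alpha$ is the largest idempotent below $q$ in both orders $\leq$ and $\preceq$. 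Both are valid.
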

  \begin{proof} %{\color{red}{REVER}}
\begin{enumerate}
    \item This follows directly by induction. 
    \item Suppose that the restriction of the descending chain to all ordinal $\gamma$ with $1 \leq \delta \leq \alpha$ is a strictly descending chain in $Q$.
    Thus we have an injective function $[1, \alpha] \to Q$, $\delta \mapsto q^\delta$. Since $card(\alpha) = card([1, \alpha])$, we must have $card(\alpha) \leq card(Q)$, then $\alpha < \mbox{successor}(card(Q))$. Thus, in particular, there is a largest ordinal $\alpha$ such that $(q^\delta)_{1 \leq \delta \leq \alpha}$ is a strictly descending chain. Thus $q^{\alpha+1} = q^\alpha$ and, by induction, $q^\beta = q^\alpha$ for each $\beta \geq \alpha$.

\item Suppose that the transfinite descending chain stabilizes at $\alpha$ (i.e. $q^{\alpha+1} = q^{\alpha}$). So $q^\alpha = q^\alpha \odot q^\alpha $ and $q^\alpha = q^{\alpha+1}=  q^\alpha \odot q$  and $q^\alpha = q^\alpha \odot q^\alpha $. Thus $q^\alpha$ is an idempotent such that $q^\alpha \preceq q$ (in particular, $q^\alpha \leq q^-$). On the other hand, for any idempotent $p \in Q$ such that $p \preceq q$ (i.e., $p = p \odot q$)  we have, by induction $p \preceq q^\beta$, for all ordinal $\beta \geq 1$: in the induction step for ordinal limits we have to use the hypothesis that $Q$ is double-distributive. 

So $p = p \odot q^\alpha \leq q^\alpha$. Thus $q^\alpha$ is the largest idempotent (in the orders $\leq$ and $\preceq$) such that $q^\alpha \preceq q$. Then, by Proposition \ref{prop:properties_of_idempotentaprox}.(6), $q^- = q^\alpha$.
\end{enumerate}
\end{proof}

Now, we are able to define an upper lower approximation:

\begin{definition}
Let $Q$ be a  commutative and semicartesian quantale that is also unital and "double-distributive". For each for  $q \in Q$, define:
$$q^+  :=  \bigwedge \{p \in Q :q\leq q \odot p\} = \bigwedge \{p \in Q :q\preceq p\}.$$
\end{definition}

\begin{lemma}\label{idempclosure+ lemma} if $Q$ is unital, semicartesian and double-distributive, then

\begin{multicols}{2}
\begin{enumerate}

\item ${1}^+ =1$, ${0}^+ = 0$

\item $q \preceq {q}^+$

\item $q \leq {q}^+$

\item $q \odot {q}^+ = q$

\item $q = {q}^+ \Leftrightarrow q\odot q = q$

\item ${q}^+ \odot {q}^+ = {q}^+$ 

\item ${q^+}^+ = q^+$ 

 \item ${q}^+$ is the $\preceq$-least $y \in Q$  such that $q \preceq y$ and is  $\odot$-idempotent (i.e. $q = q \odot y$ and $y \odot y = y$).

\item $(a \odot b)^+ \leq a^+\odot b^+$ 

\item $x \preceq y$ $\Rightarrow$ $x^+ \preceq y^+$ $\Leftrightarrow$ $x^+ \leq y^+$

Let $\{q_i : i \in I\} \subseteq Q$, then

\item ${q_j}^+ \odot \bigvee_{i \in I} q_i \geq q_j$ 

\item $q_j \odot \bigvee_{i \in I} q_i^+ = q_j$ 

\item $({\bigvee_{j} q_j}) \preceq \bigvee_{i} {q_i}^+$

\item $({\bigvee_{i} q_i})^+ \leq \bigvee_{i} {q_i}^+$ 

Suppose $Q$ divisible, then

\item $x \leq y \Rightarrow x^+ \leq y^+$

\item  $({\bigvee_{i} q_i})^+ = \bigvee_{i} {q_i}^+$

\end{enumerate}
\end{multicols}
\end{lemma}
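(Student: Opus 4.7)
The plan is to mirror the proof of Proposition~\ref{prop:properties_of_idempotentaprox}, swapping joins for meets and using double-distributivity as the key computational tool. I would first establish the central identity of item~(2): noting that $1 \in \{p : q \preceq p\}$ so the index set is nonempty, double-distributivity gives
$$q \odot q^+ \;=\; q \odot \bigwedge\{p : q \preceq p\} \;=\; \bigwedge\{q \odot p : q \preceq p\} \;=\; \bigwedge\{q\} \;=\; q,$$
which is items~(2) and (4) in one stroke. Items~(1), (3), and (5) are then immediate: (1) by inspection; (3) via semicartesian-ness applied to (4); and (5) by combining (3) with the observation that an idempotent $q$ itself lies in the defining set, forcing $q^+ \leq q$.

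The technical heart is item~(6): to see that $q^+$ is idempotent, show $q^+ \odot q^+$ belongs to the defining set of $q^+$, via $q \odot (q^+ \odot q^+) = (q \odot q^+) \odot q^+ = q$, so $q^+ \leq q^+ \odot q^+$ and semicartesian-ness gives the reverse. Item~(7) is (5) applied to $q^+$, and item~(8) packages (2), (6), and the fact that $\leq$ and $\preceq$ coincide on idempotents (Remark~\ref{quantale-re}(3)(iv)). Item~(9) uses that $a^+ \odot b^+$ is idempotent and $(a \odot b) \odot (a^+ \odot b^+) = (a \odot a^+) \odot (b \odot b^+) = a \odot b$ (by commutativity), so (8) forces $(a \odot b)^+ \leq a^+ \odot b^+$. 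Item~(10) follows from noting that if $x \preceq y$, then $x \odot y^+ = (x \odot y) \odot y^+ = x \odot y = x$, placing $y^+$ in the defining set of $x^+$.

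The sup items (11)--(14) are standard manipulations built from the quantale axiom that $\odot$ distributes over joins combined with item~(4). For example, $q_j \odot \bigvee_i q_i^+ = \bigvee_i (q_j \odot q_i^+)$ is bounded above by $q_j$ (semicartesian-ness) and below by the $i = j$ term $q_j \odot q_j^+ = q_j$, giving (12); the same template handles (11) and (13), and (14) follows since (13) says $\bigvee_i q_i^+$ lies in the defining set of $(\bigvee_i q_i)^+$. For item~(15), divisibility yields $z$ with $x = y \odot z$; then for any $p$ with $y \preceq p$, commutativity gives $x \odot p = y \odot z \odot p = y \odot p \odot z = y \odot z = x$, so $\{p : y \preceq p\} \subseteq \{p : x \preceq p\}$ and hence $x^+ \leq y^+$. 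Item~(16) then combines (14) with (15) applied to each $q_i \leq \bigvee_j q_j$.

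The main obstacle is the correct deployment of double-distributivity in item~(2): that single identity unlocks (4), (6), (8), and through them the remaining structural results. A minor pitfall worth tracking is that double-distributivity holds only for nonempty index sets, but this never causes trouble since the unit $1$ always belongs to $\{p : q \preceq p\}$.
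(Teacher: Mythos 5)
Your proof is correct and follows essentially the same route as the paper, which itself only says the argument ``follows the same spirit'' of Proposition~\ref{prop:properties_of_idempotentaprox} (dualizing joins to meets via double-distributivity) and writes out only items (15) and (16); your detailed verification of items (1)--(14) is exactly the intended dualization, with the key observation that $1$ keeps the index set nonempty. The only (harmless) divergence is item (15), where the paper computes $a^+ = (b\odot x)^+ \leq b^+\odot x^+ \leq b^+$ using item (9) and semicartesianness, while you show directly that divisibility gives the containment $\{p : y \preceq p\} \subseteq \{p : x \preceq p\}$ --- both arguments are valid and of comparable length.
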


  \begin{proof}
The proof follows the same spirit of Proposition \ref{prop:properties_of_idempotentaprox}. 
We just check the two properties that require $Q$ to be divisible:

$15.$ Since $Q$ is divisible, $a \leq b$ implies there exist $x \in Q$ such that $a = b \odot x$. Then $a^+ =  (b \odot x)^+ \leq b^+ \odot x^+ \leq b^+$.

$16.$ The non-trivial inequality depends on the hypothesis of $Q$ divisible:
$({\bigvee_{i} q_i})^+ \geq\bigvee_{i} {q_i}^+$. 
This  follows from (15) since $q_j \leq \bigvee_{i} {q_i}$, then $( q_j)^+ \leq (\bigvee_{i} {q_i})^+$ and then, by sup property $\bigvee_j (q_j^+) \leq (\bigvee_{i} {q_i})^+$
\end{proof}

\section{Sheaves on quantales}\label{sec:sheaves_quantales}

We present sheaves on quantales as a functor that satisfies gluing properties, which are formally expressed by an equalizer diagram. This is similar to the definition of sheaves on idempotent quantales proposed in \cite{aguilar2008sheaves}, but it is a completely different case since we are interested in semicartesian quantales: if the quantale was idempotent \textit{and} semicartesian we would obtain a locale, by Proposition \ref{prop:when_quantale_is_locales}, and the theory of sheaves on locales is already well established. 

\textit{From now on we always consider semicartesian quantales, unless stated otherwise.}

Remind that every quantale can be seen as a poset category  $\mathcal{Q}$ where the objects are elements of $\mathcal{Q}$ and the morphism $v \to u$ is given by the order relation $v \leq u$.

\begin{definition}\label{presheaf}
A \textbf{presheaf on a quantale} $\mathcal{Q}$ is a functor $F: \mathcal{Q}^{op} \rightarrow Set$. 
\end{definition}
Given $u,v \in \mathcal{Q}$ such that $v \leq u$, we consider restriction maps $\rho^u_v: F(u) \rightarrow F(v)$ and denote $\rho^u_v(s) = s_{|_v}$, for any $s \in F(u)$.

The functoriality of a presheaf $F$ give us:
\begin{enumerate}
    \item $\forall u \in \mathcal{Q}$ and $\forall s \in F(u), \, s_{|_u} = s$ 
    \item $\forall w \leq v \leq u$ in $\mathcal{Q}$ and $\forall s \in F(u), \, s_{|_w} = (s_{|_v})_{|_w}$
\end{enumerate}
 
We will freely manipulate the restriction maps using the above notation. 

Before we present the definition of sheaves on quantales, we recall the correspondent definition for a locale $\mathcal{L}$.

\begin{definition}\label{def:sheaf_locale}
A presheaf $F: \mathcal{L}^{op}\to Set$ is a \textbf{sheaf} if for any cover $u = \bigvee\limits_{i\in I}u_i$, of any element $u \in \mathcal{L}$, the following diagram is an equalizer

 \begin{center}
     \begin{tikzcd}
F (u) \arrow[r, "e"] & \prod\limits_{i\in I}F (u_i) \arrow[r, "p", shift left=1 ex] 
\arrow[r, "q"', shift right=0.5 ex]  & {\prod\limits_{(i,j) \in I \times I}F (u_i \wedge u_j)}
\end{tikzcd}
 \end{center}
where
\begin{center}
\begin{tabular}{ c c }
 $e(t) = \{t_{|_{u_i}} \enspace | \enspace i \in I\}$, & $p((t_k)_{ k \in I}) = (t_{i_{|_{u_i \wedge u_j}}})_{(i,j)\in I\times I}$ \\ 
  & $q((t_k)_{k \in I}) = (t_{j_{|_{u_i \wedge u_j}}})_{(i,j)\in I\times I}$ 
\end{tabular}
\end{center}
\end{definition}
 
 \begin{remark}
 The cover $u = \bigvee\limits_{i\in I}u_i$ in $\mathcal{L}$ is a cover in the sense of a Grothendieck pretopology. If $\mathcal{L} = \mathcal{O}(X)$ is the locale of open sets of a topological space, it is immediate how to obtain the usual notion of a sheaf on a topological space $X$ from the above definition. We will further explore this remark later.
 \end{remark}

Our approach to define a sheaf on a quantale is simple: we replace the meet operation $\wedge$ by the multiplication $\odot$ of the quantale.

\begin{definition}\label{sheaf_as_functor}
A presheaf $F: \mathcal{Q}^{op}\to Set$ is a \textbf{sheaf} if for any cover $u = \bigvee\limits_{i\in I}u_i$ of any element $u \in \mathcal{Q}$, the following diagram is an equalizer

 \begin{center}
     \begin{tikzcd}
F (u) \arrow[r, "e"] & \prod\limits_{i\in I}F (u_i) \arrow[r, "p", shift left=1 ex] 
\arrow[r, "q"', shift right=0.5 ex]  & {\prod\limits_{(i,j) \in I \times I}F (u_i \odot u_j)}
\end{tikzcd}
 \end{center}
where
\begin{center}
\begin{tabular}{ c c }
 $e(t) = \{t_{|_{u_i}} \enspace | \enspace i \in I\}$, & $p((t_k)_{ k \in I}) = (t_{i_{|_{u_i \odot u_j}}})_{(i,j)\in I\times I}$ \\ 
  & $q((t_k)_{k \in I}) = (t_{j_{|_{u_i \odot u_j}}})_{(i,j)\in I\times I}$ 
\end{tabular}
\end{center}
\end{definition}
 
 \begin{remark}\begin{enumerate}
     \item The cover $u = \bigvee\limits_{i\in I}u_i$ in $\mathcal{Q}$ is not a cover in the sense of a Grothendieck pretopology.
     \item If $\mathcal{Q} = \mathcal{I}(R)$ is the quantale of ideals of a commutative ring with unity $R$ we may mimic the sheaf theory of a topological space but for a ring.
 \end{enumerate}
 \end{remark}

We write, respectively, $PSh(\mathcal{Q})$ and $Sh(\mathcal{Q})$ for the categories of presheaves and sheaves on $\mathcal{Q},$ where the objects are, respectively, presheaves and sheaves, and the morphism are natural transformations between them.
\begin{remark}
The category of sheaves $Sh(\mathcal{Q})$ is a full subcategory of the category of presheaves $PSh(\mathcal{Q})$, that is, the inclusion functor $i :Sh(\mathcal{Q}) \to PSh(\mathcal{Q})$ is full.
\end{remark}
\begin{remark}
The maps $F(u_i) \to F(u_i \odot u_j)$ exist because $u_i \odot u_j $ always is less or equal to $u_i$ and $u_j$, for all $i,j\in I$, since $u_i \odot u_j  \leq u_i \odot 1 = u_i$. This is where we use the semicartesianity.
\end{remark}

Now we develop the firsts steps toward a sheaf theory on quantales, following the presentation of \cite{borceux1994handbook3} in the case of locales.

Let $F$ be a presheaf on $\mathcal{Q}$.
\begin{definition}\label{compatiblefamily}
Let $(u_i)_{i \in I}$ be a family of elements of $\mathcal{Q}.$ We say a family $(s_i \in F(u_i))_{i \in I}$ of elements of $F$ is \textbf{compatible} if for all $i,j\in I$ we have $${s_i}_{|_{u_i \odot u_j}} = {s_j}_{|_{u_i \odot u_j}}$$
\end{definition}

\begin{definition}\label{separatedpresheaf}
We say a presheaf $F$ is \textbf{separated} if, given $u = \bigvee\limits_{i \in I} u_i$ in $\mathcal{Q}$ and $s,s' \in F(u)$, we have $$\big(\forall i \in I\, s_{|_{u_i}} = s'_{|_{u_i}}\big)\implies(s=s')$$
\end{definition}

Using compatible families we equivalently define:
\begin{definition}\label{sheaf_by_comp_fam}
Let  $u = \bigvee\limits_{i \in I} u_i$ in $\mathcal{Q}$ and $(s_i \in F(u_i))_{i \in I}$ a compatible family in $F$, we say the presheaf $F$ is a \textbf{sheaf} if exists a unique element $s \in F(u)$ (called the gluing of the family) such that $s_{|_{u_i}}  = s_i,$ for all $i \in I$.
\end{definition}

It is a straightforward exercise in category theory to show that the definitions \ref{sheaf_as_functor} and \ref{sheaf_by_comp_fam} are equivalent.

\begin{lemma}\label{lema2.1.5}
If $F$ is presheaf on $\mathcal{Q}$, the following conditions are equivalent:
\begin{enumerate}
    \item F is a sheaf.
    \item F is a separated presheaf and given  $u = \bigvee\limits_{i \in I} u_i$ in $\mathcal{Q}$, every compatible family $(s_i \in F(u_i))_{i \in I}$ can be glued into an element $s \in F(u)$ such that $s_{|_{u_i}}  = s_i,$ for all $i \in I$.
\end{enumerate}
\end{lemma}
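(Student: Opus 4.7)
The plan is to decompose the sheaf axiom (unique gluing of every compatible family) into its two natural components: existence of a gluing, and uniqueness of the gluing; then to identify the uniqueness component with the separatedness condition. Both implications are then essentially one-line arguments.

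For $(1)\Rightarrow (2)$, I would assume $F$ is a sheaf in the sense of Definition \ref{sheaf_by_comp_fam}. The existence of gluings for compatible families is part of the definition, so only separatedness requires comment. Given $u = \bigvee_{i\in I}u_i$ and $s,s' \in F(u)$ with $s_{|u_i} = s'_{|u_i}$ for every $i \in I$, set $s_i := s_{|u_i} = s'_{|u_i}$. The functoriality of $F$ together with the fact that $u_i \odot u_j \leq u_i, u_j$ (semicartesianity) shows that $(s_i)_{i\in I}$ is a compatible family in the sense of Definition \ref{compatiblefamily}, and both $s$ and $s'$ are gluings of this family. The uniqueness clause of the sheaf condition forces $s=s'$, which is exactly separatedness.

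For $(2)\Rightarrow(1)$, assume $F$ is separated and that every compatible family admits at least one gluing. Given $u=\bigvee_{i\in I}u_i$ and a compatible family $(s_i \in F(u_i))_{i\in I}$, existence of a gluing $s \in F(u)$ with $s_{|u_i}=s_i$ is given by hypothesis. If $s' \in F(u)$ is another gluing, then $s_{|u_i} = s_i = s'_{|u_i}$ for all $i\in I$; by separatedness applied to the same cover, $s=s'$. Hence $F$ is a sheaf.

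I do not expect any substantial obstacle: the content is purely definitional, and the only place where the quantale structure intervenes nontrivially is in confirming that the restriction maps $F(u_i)\to F(u_i\odot u_j)$ exist so that the compatibility condition makes sense — this is already guaranteed by semicartesianity as observed in the remark immediately following Definition \ref{sheaf_as_functor}. The equivalence with the equalizer formulation of Definition \ref{sheaf_as_functor} is, as the text notes, a routine translation between the universal property of an equalizer and the pair (existence + uniqueness) of gluings, and need not be redone here.
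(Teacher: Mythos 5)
Your proposal is correct and follows essentially the same route as the paper, which simply observes that separatedness coincides with the uniqueness clause of the gluing condition in Definition \ref{sheaf_by_comp_fam}. You are in fact slightly more careful than the paper's one-line proof, since you explicitly verify that the restricted family $(s_{|_{u_i}})_{i\in I}$ is compatible (the content of Lemma \ref{lema2.1.7}) before invoking uniqueness, but the decomposition into existence plus uniqueness is identical.
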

\begin{proof}
Note that the separated condition $\big(\forall i \in I\, s_{|_{u_i}} = s'_{|_{u_i}}\big)\implies(s=s')$ is the same as the uniqueness condition of the gluing in the sheaf definition.
\end{proof}

\begin{lemma}\label{lema2.1.6}
Let $F$ be a presheaf on a quantale $\mathcal{Q}.$ Then:
\begin{enumerate}
    \item If $F$ is separated, $F(0)$ has at most one element. 
    \item If $F$ is a sheaf, $F(0)$ has exactly one element.
\end{enumerate}
\end{lemma}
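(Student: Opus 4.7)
The plan is to exploit the fact that the bottom element $0 \in \mathcal{Q}$ is the supremum of the empty family, so the empty family $(u_i)_{i \in \emptyset}$ is a cover of $0$, and then feed this degenerate cover into the separated/sheaf conditions.

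For part (1), I would start from Definition \ref{separatedpresheaf} applied to $u = 0$ and the cover indexed by $I = \emptyset$. Given any two sections $s, s' \in F(0)$, the hypothesis ``$\forall i \in I, \; s_{|_{u_i}} = s'_{|_{u_i}}$'' is vacuously satisfied, so the separated condition forces $s = s'$. Hence $F(0)$ has at most one element.

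For part (2), I would use the characterization of sheaves via compatible families (Definition \ref{sheaf_by_comp_fam}, which is equivalent to the equalizer definition by Lemma \ref{lema2.1.5}). Again take the empty cover of $0$. The empty family $(s_i \in F(u_i))_{i \in \emptyset}$ is trivially compatible since the compatibility condition ranges over pairs $(i,j) \in I \times I = \emptyset$. The sheaf condition then guarantees the existence of some $s \in F(0)$ that glues this family (the gluing condition being likewise vacuous). Combining this with part (1)---every sheaf is in particular separated---yields that $F(0)$ has exactly one element.

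I do not expect any serious obstacle here; the only subtlety worth flagging in the write-up is being explicit that $0 = \bigvee_{i \in \emptyset} u_i$ is a legitimate cover in the sense of Definition \ref{sheaf_as_functor}, so that applying the equalizer/compatible-family conditions to the empty index set is justified. Everything else reduces to vacuous quantification over the empty set.
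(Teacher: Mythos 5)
Your proposal is correct and matches the paper's own argument: both use the empty cover $0=\bigvee_{i\in\emptyset}u_i$, note that the separatedness hypothesis and the compatibility condition are vacuous over the empty index set, and conclude uniqueness (resp.\ existence plus uniqueness) of the element of $F(0)$. No gaps; your explicit remark that the empty cover is legitimate is a sensible point to include in the write-up.
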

\begin{proof}
Consider the empty cover $0 = \bigvee\limits_{i \in \emptyset}u_i$ in $\mathcal{Q}.$ So every family of elements of $F$ is of the form $(s_i \in F(u_i))_{i \in \emptyset}$, so it is immediately compatible.  
Suppose $F$ is separated, then for every $s$ and $s'$ in $F(0)$, the condition $s_{|_{u_i}} = s'_{|_{u_i}}$ holds. Since $F$ is separable, $s=s'$. So, if there is any element in $F(0)$, it is unique.

Suppose $F$ is a sheaf, then there is a unique $s \in F(0)$ such that $s_{|_{u_i}}=s_i, \forall i \in \emptyset.$ Since, in this context, every family is compatible, there is a unique element in $F(0)$, because $s_{|_{u_i}}=s_i$ always holds. 
\end{proof}

\begin{lemma}\label{lema2.1.7}
Let $F$ be presheaf on $\mathcal{Q}$. If $u = \bigvee\limits_{i \in I}u_i$ in $\mathcal{Q}$ and $s \in F(u)$, then the family $\big(s_{|_{u_i}}\big)_{i \in I}$ is compatible.
\end{lemma}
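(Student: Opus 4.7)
The plan is a direct computation using two ingredients already established in the text: semicartesianity of $\mathcal{Q}$ and the functoriality of a presheaf. Fix $i, j \in I$. By semicartesianity (Remark \ref{quantale-re}.1) we have $u_i \odot u_j \leq u_i$ and $u_i \odot u_j \leq u_j$. In particular, both $u_i \odot u_j \leq u_i \leq u$ and $u_i \odot u_j \leq u_j \leq u$ are chains in $\mathcal{Q}$.

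Next, I would invoke the functoriality property of $F$ listed just after Definition \ref{presheaf}, namely that for $w \leq v \leq u$ and $s \in F(u)$ one has $s_{|_w} = (s_{|_v})_{|_w}$. Applying this to both chains above yields
\[
(s_{|_{u_i}})_{|_{u_i \odot u_j}} \;=\; s_{|_{u_i \odot u_j}} \;=\; (s_{|_{u_j}})_{|_{u_i \odot u_j}},
\]
which is exactly the compatibility condition of Definition \ref{compatiblefamily}. Since $i,j$ were arbitrary, the family $(s_{|_{u_i}})_{i \in I}$ is compatible.

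There is no real obstacle here: the statement is essentially the observation that restricting a globally defined element to two pieces automatically agrees on the ``overlap,'' and the only place the quantalic setting enters (as opposed to the locale case) is to ensure that $u_i \odot u_j$ sits below both $u_i$ and $u_j$ so that the restriction maps $F(u_i) \to F(u_i \odot u_j)$ and $F(u_j) \to F(u_i \odot u_j)$ are defined. Semicartesianity is precisely what guarantees this, and once that is in place the argument is a one-line application of presheaf functoriality.
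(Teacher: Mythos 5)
Your proof is correct and is essentially identical to the paper's: both apply the presheaf composition law $s_{|_w} = (s_{|_v})_{|_w}$ along the chains $u_i \odot u_j \leq u_i \leq u$ and $u_i \odot u_j \leq u_j \leq u$ to get $(s_{|_{u_i}})_{|_{u_i \odot u_j}} = s_{|_{u_i \odot u_j}} = (s_{|_{u_j}})_{|_{u_i \odot u_j}}$. Your explicit remark that semicartesianity is what makes the restriction maps $F(u_i) \to F(u_i \odot u_j)$ well-defined is a point the paper makes elsewhere (in the remark following Definition \ref{sheaf_as_functor}) but is a welcome clarification here.
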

\begin{proof}
Let $i, j \in I$, then $$\big( s_{|_{u_i}}\big)_{|_{u_i \odot u_j}} = s_{|_{u_i \odot u_j}}= \big(s_{|_{u_j}}\big)_{|_{u_i \odot u_j}}  $$
\end{proof}

\begin{lemma}\label{lema2.1.8}
Let $F$ be a sheaf on $\mathcal{Q}$. If, $\forall i,j \in I$ with $i \neq j$, $u_i \odot u_j =0,$ then $F(u) \cong \prod\limits_{i \in I}F(u_i).$
\end{lemma}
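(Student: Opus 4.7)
The plan is to exhibit an explicit bijection between $F(u)$ and $\prod_{i \in I} F(u_i)$ using the sheaf condition together with the hypothesis that $u_i \odot u_j = 0$ for $i \neq j$, which forces the compatibility conditions in the equalizer diagram to become vacuous.

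First, I would consider the natural candidate map $e : F(u) \to \prod_{i \in I} F(u_i)$ defined by $s \mapsto (s_{|_{u_i}})_{i \in I}$. This is the left leg of the equalizer diagram in Definition \ref{sheaf_as_functor}, so showing it is an isomorphism reduces to showing that the parallel pair $p, q$ in that diagram is equalized by \emph{every} tuple $(s_i)_{i \in I} \in \prod_i F(u_i)$. Equivalently, by Lemma \ref{lema2.1.5} combined with Definition \ref{sheaf_by_comp_fam}, it suffices to prove two things: (a) every family $(s_i \in F(u_i))_{i \in I}$ is automatically compatible in the sense of Definition \ref{compatiblefamily}, and (b) the unique gluing provided by the sheaf property supplies the inverse.

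For (a), I would split the compatibility check into cases. When $i = j$, the condition $s_{i|_{u_i \odot u_i}} = s_{j|_{u_i \odot u_j}}$ is trivial. When $i \neq j$, the hypothesis gives $u_i \odot u_j = 0$, and then Lemma \ref{lema2.1.6}(2) tells us that $F(0)$ is a singleton (this is where the sheaf assumption on $F$ is used, not just the presheaf structure). Hence both $s_{i|_0}$ and $s_{j|_0}$ are forced to be the unique element of $F(0)$, and the compatibility condition holds automatically. Thus every $(s_i)_i \in \prod_{i \in I} F(u_i)$ is a compatible family on the cover $u = \bigvee_i u_i$.

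For (b), since $F$ is a sheaf, each such compatible family admits a unique gluing $s \in F(u)$ with $s_{|_{u_i}} = s_i$ for all $i \in I$. This produces a two-sided inverse to $e$: the map $(s_i)_i \mapsto s$ is inverse to $e$ on the right by construction, and on the left by the separation part of Lemma \ref{lema2.1.5}, since $t \in F(u)$ is determined by its restrictions to a cover. Therefore $e$ is a bijection, giving the desired isomorphism $F(u) \cong \prod_{i \in I} F(u_i)$ in $\mathrm{Set}$. There is no real obstacle here; the only subtlety worth flagging is that the argument genuinely uses $F$ to be a sheaf (not merely separated) in order to invoke $|F(0)| = 1$ rather than $|F(0)| \leq 1$, which is needed so that the compatibility equations can actually be satisfied (rather than merely failing to be violated).
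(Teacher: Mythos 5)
Your proof is correct and follows essentially the same route as the paper's: show that every tuple $(s_i)_i \in \prod_i F(u_i)$ is automatically a compatible family because the compatibility conditions for $i \neq j$ take place in $F(u_i \odot u_j) = F(0)$, and then invoke the sheaf condition to get the bijection. If anything, your version is slightly more explicit than the paper's in citing Lemma \ref{lema2.1.6}(2) to justify that $F(0)$ is a singleton, which is exactly the point that makes arbitrary families compatible.
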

\begin{proof}
Since $0 \leq u_i,$ for all $i \in I,$ there is $s \in F(u)$ such that

$$(s_{|_{u_i}})_{|_{u_i\odot u_j}} =  (s_{|_{u_i}})_{|_0} =  s_{|_0} $$
and
$$(s_{|_{u_j}})_{|_{u_i\odot u_j}} =  (s_{|_{u_j}})_{|_0} =  s_{|_0} $$

Besides that, if $i = j$, $(s_{|_{u_i}})_{|_{u_i\odot u_j}} = (s_{|_{u_j}})_{|_{u_i\odot u_j}}$ immediately. So, for each $s \in F(u)$ we have a correspondent compatible family $(s_{u_i} \in F(u_i))_{i \in I}$. Since $F$ is a sheaf, this correspondence is bijective. 

It is clear that every compatible family $(s_i \in F(u_i))_{i \in I}$ is an element of $\prod\limits_{i \in I}F(u_i)$.
On the other hand, every element $(s_i \in F(u_i))_{i \in I}$ of $\prod\limits_{i \in I}F(u_i)$ has a correspondent compatible family, by what we reasoned above. 
\end{proof}

The following construction provides a sheaf over a quantale $\Q$ from a sheaf over $\Q$ and any $u \in Q$. 

\begin{proposition}\label{prop2.3.3}
Let $F$ be a sheaf on a quantale $\cal Q$. For each $w \leq v$, consider:
$$ F_{|_u}(v) = \begin{cases}
F(v), &\mbox{ if } v \leq u \\
\emptyset, &\mbox{ otherwise.}
\end{cases}$$

$$ F_{|_u}(w\to v) = \begin{cases}
F(w\to v): F(v) \to F(w), &\mbox{ if } w \leq v \leq u \\
! : \emptyset \to F_{|_u}(w), &\mbox{ if }  w \leq v \nleq u .
\end{cases}$$

is a sheaf. \footnote{Note that $F_{|_1} = F$ and if $u ' \leq u \in Q$, then ${F_{|_u}}_{|_{u'}} = F_{|_{u'}}$.}
\end{proposition}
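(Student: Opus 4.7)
My plan is to verify first that $F_{|_u}$ is a well-defined presheaf, and then check the sheaf equalizer condition at an arbitrary cover $v = \bigvee_{i \in I} v_i$ by splitting cases on whether $v \leq u$.

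For the presheaf axioms, given $w' \leq w \leq v$ in $\Q$, I would observe the following. If $v \leq u$, then $w, w' \leq u$ as well, so $F_{|_u}$ restricted to the down-set $\{x : x \leq v\}$ is literally $F$, and the composition axiom reduces to functoriality of $F$. If $v \not\leq u$, then $F_{|_u}(v) = \emptyset$ and both the direct map $F_{|_u}(w' \to v)$ and the composite $F_{|_u}(w' \to w) \circ F_{|_u}(w \to v)$ are forced to be the unique empty map out of $\emptyset$, regardless of whether $w, w'$ lie below $u$. The identity axiom is immediate, and it follows that $F_{|_u}$ is a presheaf.

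For the sheaf condition, suppose $v = \bigvee_{i\in I} v_i$. In the first case, if $v \leq u$, then $v_i \leq v \leq u$ and, by semicartesianity, $v_i \odot v_j \leq v_i \leq u$ for all $i, j \in I$. Hence every set in the diagram
$$F_{|_u}(v) \xrightarrow{\ e \ } \prod_{i \in I} F_{|_u}(v_i) \rightrightarrows \prod_{(i,j) \in I \times I} F_{|_u}(v_i \odot v_j)$$
agrees with the corresponding set for $F$, and the diagram itself coincides with the one for $F$ at this cover; since $F$ is a sheaf, it is an equalizer.

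In the second case, if $v \not\leq u$, then $F_{|_u}(v) = \emptyset$. The key observation is that there must exist some $i_0 \in I$ with $v_{i_0} \not\leq u$; otherwise $u$ would be an upper bound for $\{v_i\}_{i \in I}$, forcing $v = \bigvee_i v_i \leq u$, a contradiction. Hence $F_{|_u}(v_{i_0}) = \emptyset$, so the middle product $\prod_{i \in I} F_{|_u}(v_i)$ is empty, and the equalizer of the parallel pair $p, q$ is empty, matching $F_{|_u}(v) = \emptyset$, with $e$ tautologically an equalizer. The only non-routine step is this last observation that $v \not\leq u$ propagates to emptiness of the product via some $v_{i_0} \not\leq u$; everything else is bookkeeping with the empty set and does not appear to present any real obstacle.
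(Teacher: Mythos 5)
Your proof is correct and is essentially the same argument as the paper's: both split on whether $v \leq u$, reduce the first case to the sheaf condition for $F$, and in the second case use the key observation that $v \nleq u$ forces some $v_{i_0} \nleq u$ (the paper states the contrapositive: a compatible family forces every $F_{|_u}(v_i)$ nonempty, hence every $v_i \leq u$, hence $v \leq u$). The only cosmetic difference is that you work with the equalizer diagram of Definition \ref{sheaf_as_functor} while the paper uses the separated-presheaf-plus-gluing formulation of Lemma \ref{lema2.1.5}, which the paper has already noted are equivalent.
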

\begin{proof}
It is clear that $F_{|_u}$ is a presheaf. Consider $v = \bigvee\limits_{i\in I}v_i$ in $\cal Q,$ and $s, s' \in F_{|_u}(v)$ such that $s_{|_{v_i}}=s_{|_{v_i}}', \, \forall i \in I$

If $v \leq u$, then $s, 's \in F(v) = F_{|_u}(v).$ Since $F$ is a sheaf, it is separated so $s = s'.$ If $v \nleqslant u$, then $s, 's \in \emptyset$ and there is nothing to do. Thus $F_{|_u}$ is a separated presheaf.

Now consider $(s_i \in F_{|_u}(v_i))_{i\in I}$ a compatible family. Suppose $F_{|_u}(v_i) = \emptyset$ for some $i \in I$. For such $i \in I$, there is no $s_i$ in $F_{|_u}(v_i)$, then, there is $j \in I$ such that $s_{i_{|_{u_i \odot u_j}}} \neq s_{j_{|_{u_i \odot u_j}}}$. In other words, the family is not compatible. This implies $F_{|_u}(v_i)=F(v_i),$ for all $i\in I.$ So $v_i \leq u$, which means $\bigvee\limits_{i\in I}v_i = v\leq u$. Therefore $F_{|_u}(v)=F(v)$. 

Since $F$ is a sheaf, we conclude the compatible family $(s_i \in F_{|_u}(v_i))_{i\in I}$ can be glued into $s \in F_{|_u}(v_i)$ such that $s_{|_{v_i}}, \, \forall i \in I$. By \ref{lema2.1.5}, $F_{|_u}$ is a sheaf. 
\end{proof}

Next, we introduce the first (abstract) example of a sheaf on a quantale $\mathcal{Q}$.

\begin{example}

The functor $Q(-, v)$ is a sheaf, for every fixed $v \in \mathcal{Q}$.

Recall that $\mathcal{Q}(-, v)$ is the functor $Hom_{\mathcal{Q}}(-,v)$ so it is a presheaf, where if $w \leq u$, then we send the unique element $\{(u \to v)\}$ in $Q(u,v)$ to the unique element $\{(w \to v) \}$ in $Q(w,v)$. 

Observe that we have two cases:

\begin{enumerate}
        \item Suppose $u \leq v$: since $u_i \leq u$, for all $i \in I$, we have that $u_i \leq v$, for all $i \in I$. Take $s_i = (u_i \to v) \in \mathcal{Q}(u_i,v)$, since $u_i \odot u_j \leq u_i, u_j$, for all $i, j \in I$, $${s_i}_{|_{u_i \odot u_j}} = (u_i \odot u_j \to v)={s_j}_{|_{u_i \odot u_j}}$$
        
        So $(s_i)_{i\in I}$ is a compatible family. To conclude $\mathcal{Q}(u,v)$ is a sheaf, take the only element $s = (u \to  v)  \in \mathcal{Q}(u,v)$ and observe that $ s_{|_{u_i}} = (u_i \to v) = s_i,$ for all $i \in I.$
        
        \item Suppose $ u \nleq v$: if $u_i \leq v$, for all $i \in I$, by definition of supremum, $\bigvee_{i\in I} u_i \leq v,$ which is not possible. So there is at least one $i \in I$ (if $I \neq \emptyset$) such that $u_i \nleq v $. Thus, $\mathcal{Q}(u,v)$ and $\mathcal{Q}(u_i,v)$  are empty sets, for such an $i \in I$. Then the sheaf condition is vacuously true.
        
        If $I = \emptyset,$ then $\bigvee\limits_{i \in \emptyset}u_i = 0$ and $\mathcal{Q}(0,v)$ fits in the first case since $0 \leq v$.
\end{enumerate}
\end{example}

Next we introduce a concrete example of sheaf. 

\begin{proposition}\label{prop.productand interval_of_sheaves}
\begin{enumerate}
    \item 
Let $(Q_j)_{j \in J}$ be a family of commutative and semicartesian quantales and $(F_j)_{j \in J}$ be a family of sheaves, 
 $F_j: Q_j^{op} \to Set $, for each $j \in J$.
 Then: $\prod_{j\in J} Q_j$ is a commutative semicartesian quantal; a family $\{(u_j^i)_{j \in J} : i \in I\}$ is a cover of $(u_j)_{j \in J} \in \prod_{j\in J} Q_j$ iff for each $j \in J$, $\{u_j^i : i \in I\}$ is a cover of $u_j \in Q_j$; and $\prod_{j\in J} F_j: (\prod_{j\in J} Q_j)^{op}\to Set$ given by $(\prod_{j\in J} F_j)(u_j)_{j \in J} := \prod_{j\in J} F_j(u_j)$ is a sheaf with the restriction maps defined component-wise from each $F_j$.
 \item Let $ F : Q^{op} \to Set$ be a sheaf on the commutative and semicartesian quantale $Q$.  Let $e,a \in Q$, $e \leq a$, $e^2 = e$ and consider $Q' = [e,a]$, the (commutative and semicartesian) ``subquantal" of $Q$. Then $F' : {Q'}^{op} \to Set$ defined by $F'(u) = F(u)$, if $u \neq e$ and $F'(e) = \{*\}$, with non-trivial restriction maps $F'(v) \to F'(u) = F(v) \to F(u)$, if $e < v \leq u$,  is a sheaf.
 \end{enumerate}
\end{proposition}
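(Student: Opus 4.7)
The plan is to verify the sheaf axiom — existence and uniqueness of gluings of compatible families — in each item separately, always by reducing to the sheaf property of the ingredient sheaves. For item (1), I would first note that in the product quantale $\prod_{j\in J} Q_j$ both sup and multiplication are component-wise, so a family $\{(u_j^i)_j : i\in I\}$ covers $(u_j)_j$ exactly when $\{u_j^i : i\in I\}$ covers $u_j$ for every $j \in J$. Because restrictions in $\prod_j F_j$ are also component-wise, a compatible family $(s^i)_i$ with $s^i = (s^i_j)_j$ projects, for each fixed $j$, to an $F_j$-compatible family $(s^i_j)_i$ for the $Q_j$-cover $u_j = \bigvee_i u_j^i$. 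Applying the sheaf axiom of each $F_j$ yields a unique $s_j \in F_j(u_j)$, and $s := (s_j)_j \in (\prod_j F_j)(u)$ is the unique common gluing.

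For item (2), I would first check that $F'$ is a well-defined presheaf — functoriality at $e$ is automatic because every map into or out of the singleton $\{*\}$ is forced. Given a cover $u = \bigvee_{i\in I} u_i$ in $Q'$ and an $F'$-compatible family $(s_i)_{i\in I}$, I would split on whether $u = e$ or $u > e$. The case $u = e$ is immediate: since $e$ is the minimum of $Q' = [e,a]$, the equality $\bigvee u_i = e$ forces $u_i = e$ for every $i$, and the unique gluing is $* \in F'(e) = \{*\}$. If instead $u > e$, set $J := \{i \in I : u_i > e\}$; then $J \neq \emptyset$ and $\bigvee_{i \in J} u_i = u$, since the indices with $u_i = e$ contribute nothing beyond $e \leq u$.

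The next step is to argue that $(s_i)_{i\in J}$ is $F$-compatible for the $Q$-cover $u = \bigvee_{i\in J} u_i$, so that $F$'s sheaf axiom produces a unique $s \in F(u) = F'(u)$ with $s|_{u_i} = s_i$ for $i \in J$; the conditions $s|_{u_i} = *$ for $i \notin J$ then hold automatically in $F'(e) = \{*\}$, and uniqueness in $F'$ transfers from uniqueness in $F$. For pairs $i, j \in J$ with $u_i \odot u_j > e$ the required compatibility is automatic, because $F'(u_i \odot u_j) = F(u_i \odot u_j)$ and the compatibility equation transports verbatim. The delicate pairs are those with $u_i \odot u_j = e$, where $F'$-compatibility collapses to the tautology in $\{*\}$ while $F$-compatibility would demand $s_i|_e = s_j|_e$ in $F(e)$; I expect this to be the main obstacle, and I would try to resolve it by exploiting idempotency of $e$ (so $e = e \odot u_i$ for each $i \in J$) together with the other $F$-compatibility data already established, possibly by refining the cover before invoking the sheaf axiom of $F$.
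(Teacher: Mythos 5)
Your treatment of item (1) is fine and is essentially what the paper intends (its proof of that item is literally the word ``straightforward''): sups, multiplication and restrictions in the product are all component-wise, so covers, compatibility and gluings all decompose coordinate by coordinate and reassemble uniquely. For item (2), your reductions are also correct as far as they go: the case $u=e$ is trivial because $F'(e)=\{*\}$ absorbs everything (including the empty cover, whose sup in $Q'=[e,a]$ is $e$), and discarding the indices with $u_i=e$ is harmless since $u_i\odot e=e$ whenever $u_i\geq e$ and $e$ is idempotent, so the conditions $s|_{u_i}=*$ hold automatically.

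However, the point you flag at the end is not a technicality you can expect to dispatch later: it is a genuine gap, and in fact it is the entire content of the statement. For $i,j$ with $u_i,u_j>e$ but $u_i\odot u_j=e$, the $F'$-compatibility condition is the tautology $*=*$ in $F'(e)=\{*\}$, whereas $F$-compatibility demands $s_i|_{e}=s_j|_{e}$ in $F(e)$, and nothing in the hypotheses forces this. Idempotency of $e$ does not help (it only guarantees that $[e,a]$ is closed under $\odot$), and refining the cover cannot manufacture agreement over $e$ that the data do not possess. Indeed the transfer of compatibility from $F'$ to $F$ already fails for locales: take $Q=\mathcal{O}(\mathbb{R})$, $e=(1,2)$, $a=\mathbb{R}$, $F$ the sheaf of continuous functions, and the cover $(0,3)=(0,2)\vee(1,3)$ in $[e,a]$ with sections $s_1\equiv 0$ and $s_2\equiv 1$; this family is $F'$-compatible but admits no gluing in $F((0,3))=F'((0,3))$. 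So your instinct that this is ``the main obstacle'' is correct, and no completion of your argument along the proposed lines will close it. For comparison, the paper's own proof of item (2) is a one-liner asserting that an $F'$-compatible family is $F$-compatible and then invoking the sheaf property of $F$; it passes silently over exactly the case you isolated, so your attempt, while incomplete, locates the real difficulty that the published argument does not address.
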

\begin{proof}
\begin{enumerate}
\item  Straightforward.

\item Let $u = \bigvee_{i \in I}u_i$ be a cover in $Q'$, and $(s_i \in F'(u_i))_{i \in I}$ a compatible family. Since $F'(u_i) = F(u_i)$ and the restriction maps for $F'$ are restriction maps for $F$, we have that $(s_i \in F(u_i))_{i \in I}$ a compatible family. Since $F$ is a sheaf, there is a unique gluing $s \in F(u) = F'(u)$. So $F'$ is a sheaf.
\end{enumerate}
\end{proof}

Next we introduce a concrete example of a sheaf. 

\begin{example}\label{ex_ball}
    Take $Q = ([0,\infty],+,\geq)$ the extended half-line quantale. Let $(X,d)$ be an (extended) metric space. For each $A \subseteq X$ and each $r \in [0, \infty]$ consider balls $F_A(r) = B_r(A)=\{x \in X: d(x,A) \leq r\}$. Note that $s \geq r$ entails $B_r(A) \subseteq B_s(A)$ and, in the obvious way $F_A : [0, \infty] \to Set$ became a presheaf over the quantale $Q$ where $F_A((s\geq  r)) : F_A(r) \hookrightarrow F_A(s)$ is the inclusion. Moreover, this is a sheaf, since if $r = \bigwedge_{i \in I} s_i$ in $[0, \infty]$, then the diagram below is an equalizer

$$B_r(A)\rightarrow \prod_i B_{s_i}(A) \rightrightarrows \prod_{i,j} B_{s_i  + s_j}(A)$$ for non-empty coverings. However, if $I = \emptyset$, then $r = \bigwedge_{i \in I} s_i = \infty$. Therefore, $B_{\infty}(A)$ is not single element (i.e., is not the terminal object in $Set$). This means that the sheaf condition fails when $I = \emptyset$. To surpass this,  we maintain our definition $B_r(A)$ for all $r \in [0,\infty)$ but for $r = \infty$ we define $B_\infty(A) = \{*\}$. For any $s \geq r$, the restrictions map is the identity map on $\{*\}$.
\end{example}

The next result is simple but it is fundamental to show that $Sh(Q)$ is a monoidal closed category.

\begin{proposition}\label{expcand-pr}
 Let  $F$ be a sheaf on a quantale $Q$ and $u \in Q$. For each $w \leq v$, the following presheaf\footnote{Note that $F^{(1)} = F$ and if $u',  u \in Q$, then $(F^{(u)})^{(u')} = F^{(u' \odot u)}$.}is a sheaf:
$$ F^{(u)}(v) := F(u \odot v)$$

$$ F^{(u)}(w \leq v) := F(u \odot w \leq u \odot w)$$
\end{proposition}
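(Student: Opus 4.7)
The plan is to reduce the sheaf condition for $F^{(u)}$ at a cover $v = \bigvee_{i \in I} v_i$ in $Q$ to the sheaf condition for $F$ at the induced cover of $u \odot v$.

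First I would verify the functoriality of $F^{(u)}$, which is immediate from the functoriality of $F$ together with the monotonicity of $\odot$ in the second entry (so $w \leq v$ gives $u \odot w \leq u \odot v$, and hence a legitimate restriction $F(u \odot v) \to F(u \odot w)$). Next, given a cover $v = \bigvee_{i \in I} v_i$ in $Q$, I would apply distributivity of $\odot$ over suprema to get
\[
u \odot v \;=\; u \odot \bigvee_{i \in I} v_i \;=\; \bigvee_{i \in I} (u \odot v_i),
\]
so that $\{u \odot v_i : i \in I\}$ is a cover of $u \odot v$ in $Q$, and the sheaf property of $F$ can be invoked at this cover.

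The crucial bookkeeping step is matching up compatibility conditions. By Definition \ref{sheaf_by_comp_fam}, an $F^{(u)}$-compatible family $(s_i \in F(u \odot v_i))_{i \in I}$ agrees after restriction in $F(u \odot v_i \odot v_j)$. Since $Q$ is commutative and semicartesian, $(u \odot v_i) \odot (u \odot v_j) = u \odot u \odot v_i \odot v_j \leq u \odot v_i \odot v_j$, so further restricting along $F(u \odot v_i \odot v_j) \to F((u \odot v_i)\odot (u \odot v_j))$ and using functoriality yields that $(s_i)_{i \in I}$ is also a compatible family for $F$ at the cover $\{u \odot v_i\}$ of $u \odot v$. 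By the sheaf property of $F$, there is a unique $s \in F(u \odot v) = F^{(u)}(v)$ whose $F$-restriction to each $F(u \odot v_i)$ equals $s_i$; but this is precisely the condition $s|_{v_i} = s_i$ in $F^{(u)}$, so $s$ glues the family in $F^{(u)}$.

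For uniqueness, if $s, s' \in F^{(u)}(v) = F(u \odot v)$ both glue $(s_i)_{i \in I}$ in $F^{(u)}$, then their $F$-restrictions to each $F(u \odot v_i)$ agree with $s_i$, so separatedness of $F$ at the cover $\{u \odot v_i\}$ of $u \odot v$ forces $s = s'$. Hence $F^{(u)}$ satisfies the sheaf condition of Definition \ref{sheaf_by_comp_fam}. The only subtlety — and the point where the argument could have failed — is the asymmetry between $u \odot v_i \odot v_j$ (appearing in the $F^{(u)}$ diagram) and $(u \odot v_i) \odot (u \odot v_j) = u^2 \odot v_i \odot v_j$ (appearing in the $F$ diagram); semicartesianity makes the passage go in the direction we need, namely that $F^{(u)}$-compatibility implies $F$-compatibility, so that the sheaf property of $F$ can be invoked.
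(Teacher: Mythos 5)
Your proposal is correct and follows essentially the same route as the paper's proof: verify functoriality, use distributivity to turn $v=\bigvee_i v_i$ into the cover $u\odot v=\bigvee_i(u\odot v_i)$, and use semicartesianity (via $(u\odot v_i)\odot(u\odot v_j)\leq u\odot v_i\odot v_j$) to pass from $F^{(u)}$-compatibility to $F$-compatibility before invoking the sheaf property of $F$. The only cosmetic difference is that you invoke commutativity for that key inequality, whereas semicartesianity and monotonicity of $\odot$ already suffice, and you spell out the separatedness half, which the paper leaves implicit in ``unique gluing.''
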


\begin{proof}
It is clear that $F^{(u)}$ is a presheaf, since $F$ is a sheaf and $w \leq v$ in $Q$ implies that $(u \odot w) \leq (u \odot v)$. 

Note that if $v = \bigvee_i  v_i$ then  $u \odot v  = \bigvee_i  (u \odot v_i)  $ is a cover.

Take a family $(s_i) \in F^{(u)}(v_i) = F(u \odot v_i)$, such that 
$$F(u \odot v_i \odot v_j \leq u \odot v_i )(s_i) = F(u \odot v_i \odot v_j \leq u \odot v_j )(s_j) \in F(u \odot v_i \odot v_j) \, \forall i \in I$$

Since $u \odot v_i \odot u \odot v_j \leq u \odot  v_i \odot v_j $, we have that $s_i \in F(v_i \odot u) $ is a compatible family for $F$. Since $F$ is a sheaf, there is a unique gluing $s \in F(u \odot v) = F^{(u)}(v)$ for the family $(s_i)_{i \in I}$.
\end{proof}

Now we prove categorical properties of $Sh(Q)$ that are classic results in the localic case. We start with a list of statements whose verification follows exact the same steps of the localic case.

\begin{proposition} The subcategory $Sh(\mathcal{Q}) \hookrightarrow Set^{\mathcal{Q}^{op}}$ is closed under limits. 
\end{proposition}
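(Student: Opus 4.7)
The plan is to exploit the standard fact that limits in the presheaf category $Set^{\mathcal{Q}^{op}}$ are computed pointwise, combined with the universal fact that limits commute with limits in $Set$. Concretely, given a small diagram $D : J \to Sh(\mathcal{Q})$, I would first compute its limit $L$ in $PSh(\mathcal{Q}) = Set^{\mathcal{Q}^{op}}$, where $L(u) = \lim_{j \in J} D(j)(u)$ with restriction maps induced componentwise. The inclusion $Sh(\mathcal{Q}) \hookrightarrow PSh(\mathcal{Q})$ is full (as noted earlier), so it suffices to show that this pointwise $L$ is itself a sheaf; the universal property of $L$ in $PSh(\mathcal{Q})$ then automatically restricts to give the universal property of $L$ in $Sh(\mathcal{Q})$.

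To verify the sheaf condition, I would fix a cover $u = \bigvee_{i \in I} u_i$ in $\mathcal{Q}$. Since each $D(j)$ is a sheaf, the diagram
\begin{equation*}
D(j)(u) \to \prod_{i \in I} D(j)(u_i) \rightrightarrows \prod_{(i,k) \in I \times I} D(j)(u_i \odot u_k)
\end{equation*}
is an equalizer in $Set$ for every $j \in J$. Now take the limit over $j \in J$ of this diagram of diagrams. Since the limit functor $\lim_J : Set^J \to Set$ is a right adjoint, it preserves all limits; in particular it preserves products and equalizers. Thus applying $\lim_J$ yields an equalizer diagram
\begin{equation*}
L(u) \to \prod_{i \in I} L(u_i) \rightrightarrows \prod_{(i,k) \in I \times I} L(u_i \odot u_k),
\end{equation*}
where on each factor we have used the identification $\lim_J D(j)(v) = L(v)$. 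The two parallel arrows coincide with the $p$ and $q$ of Definition \ref{sheaf_as_functor} for $L$ by componentwise inspection, so $L$ is a sheaf.

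There is no real obstacle here: the argument is essentially formal, relying only on the fact that the sheaf condition is expressed as a limit (an equalizer of a pair into a product) and that limits commute with limits. The only small bookkeeping point to keep in mind is that the construction of $\prod_i L(u_i)$ and of $\lim_J \prod_i D(j)(u_i)$ agree up to canonical isomorphism — this is exactly the commutativity of the product with the $J$-shaped limit — and similarly for the double product indexed by $I \times I$. Once these identifications are made, the proof is a one-line invocation of the fact that right adjoints preserve limits.
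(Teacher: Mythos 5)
Your argument is correct and is essentially the paper's own: the paper simply defers to the verbatim localic argument of Borceux's Proposition 2.2.1, which is exactly the observation you spell out — the sheaf condition is an equalizer-of-products (a limit) and the pointwise $J$-indexed limit preserves it because limits commute with limits. No gap; your version just makes the formal argument explicit rather than citing it.
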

\begin{proof}
Consider a small (index) category $\mathcal{J}$ and a functor $F: \mathcal{J}\to PSh(\mathcal{Q})$ with limits\footnote{All presheaf categories are complete and the limits are computed pointwise} $(L,p_J : L \to F(J))_{J \in \mathcal{J}_0}$. To show that $Sh(\mathcal{Q}) \hookrightarrow Set^{Q^{op}}$ is closed under limits we have to prove that if $F(J)$ is a sheaf for all object $J$ in $\mathcal{J}$, then the limit $L$ is a sheaf. Now, the argument is the verbatim copy of the argument used in the proof of \cite[Proposition 2.2.1]{borceux1994handbook3}, but replacing $J$ and $\mathcal{J}$ by $I$ and $\mathcal{I}$, respectively.  
\end{proof}

\begin{corollary}
 $Sh(\mathcal{Q})$ has a terminal object, the (essentially unique) presheaf such $card(\textbf{1}(u)) =1 $, for each $u \in Q$. Moreover, $Hom_Q(-,1) \cong \textbf{1}$
\end{corollary}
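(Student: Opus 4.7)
The plan is to first construct the terminal object directly, then identify it with $\Q(-,1)$.

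First, I would define the presheaf $\mathbf{1} : \mathcal{Q}^{op} \to Set$ by setting $\mathbf{1}(u) = \{*\}$ for each $u \in \mathcal{Q}$, with the restriction maps being the unique functions between singletons. This is automatically a functor, hence a presheaf. To show it is a sheaf using Definition~\ref{sheaf_as_functor}, I would take an arbitrary cover $u = \bigvee_{i \in I} u_i$ and observe that the diagram
\[
\{*\} \longrightarrow \prod_{i \in I}\{*\} \rightrightarrows \prod_{(i,j) \in I \times I} \{*\}
\]
is trivially an equalizer, since all sets involved are singletons and all arrows are the unique maps between singletons. This includes the degenerate case $I = \emptyset$ (which covers $\bot$), where the empty products are still singletons and the condition $\mathbf{1}(0) = \{*\}$ matches Lemma~\ref{lema2.1.6}(2).

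Next, I would verify that $\mathbf{1}$ is terminal in $Sh(\mathcal{Q})$. For any sheaf $F$, there is exactly one natural transformation $\eta : F \to \mathbf{1}$, whose components $\eta_u : F(u) \to \{*\}$ are the unique maps to the singleton; naturality is automatic since the codomain restriction maps are unique. Essential uniqueness follows because any two sheaves whose values are singletons are canonically isomorphic via the unique maps between singletons.

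For the final claim $\Q(-,1) \cong \mathbf{1}$, I would argue as follows. Since $1 = \top$ in the (integral) semicartesian quantale, we have $u \le 1$ for every $u \in \mathcal{Q}$; thus $\Q(u,1) = \{(u \to 1)\}$ is a singleton for every $u$. Defining $\varphi_u : \Q(u,1) \to \mathbf{1}(u)$ as the unique map between these singletons gives a family of bijections, and naturality in $u$ is immediate (both squares involve only singletons). By the earlier example, $\Q(-,1)$ is already known to be a sheaf, so this is an isomorphism in $Sh(\mathcal{Q})$.

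There is no serious obstacle here; the only subtlety is making sure the argument covers the empty cover case, which is handled by the convention that empty products in $Set$ are singletons and by Lemma~\ref{lema2.1.6}. Everything else is formal, following the standard argument that a presheaf whose values are all singletons is terminal in both $PSh(\Q)$ and the full subcategory $Sh(\Q)$.
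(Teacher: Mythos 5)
Your proof is correct, but it takes a more hands-on route than the paper. The paper derives the corollary directly from the proposition immediately preceding it: since $Sh(\mathcal{Q})$ is closed under limits in $PSh(\mathcal{Q})$, the terminal object of $Sh(\mathcal{Q})$ must be the terminal presheaf (the limit of the empty diagram, computed pointwise), which is the constant-singleton presheaf, and this is identified with $\mathcal{Q}(-,1)$ because $1=\top$ makes every $\mathcal{Q}(u,1)$ a singleton. You instead verify from scratch that the constant-singleton presheaf satisfies the equalizer condition of Definition~\ref{sheaf_as_functor} (including the empty-cover case), check terminality by hand, and only then build the isomorphism with $\mathcal{Q}(-,1)$. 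Your version is self-contained and does not depend on the closure-under-limits proposition, which is a virtue if one wants the terminal object early; the paper's version is shorter and reuses machinery it has already paid for. Both correctly handle the two points that actually need care: the empty cover (where the empty product is a singleton, consistent with Lemma~\ref{lema2.1.6}) and the use of integrality ($1=\top$, via Remark~\ref{quantale-re}) to see that $\mathcal{Q}(u,1)$ is a singleton for every $u$.
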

\begin{proof}
Since $Sh(\mathcal{Q}) \hookrightarrow Set^{Q^{op}}$ is closed under limit, a terminal object in $Sh(\mathcal{Q})$ must be the terminal presheaf $Hom_Q(-,1) \cong 1$  ($Hom_Q(u,1) = 1, \forall u \in Q $ ) in $PSh(\mathcal{Q})$. Since $1$ is the top element of $Q$, there is an arrow from $u \to 1$, for all $u \in Q$. So  $card(\textbf{1}(u)) =1 $.
\end{proof}

\begin{proposition}\label{prop:mono_iff_injective}
A monomorphism between sheaves $\eta : F \rightarrowtail G$  is just a monomorphism between their underlying presheaves (and they are monomorphism if and only if $\eta_u : F(u) \to G(u)$ is injective, for each $u \in Q$).
\end{proposition}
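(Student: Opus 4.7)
The plan is to exploit the closure of $Sh(\mathcal{Q})$ under limits in $PSh(\mathcal{Q})$, established in the previous proposition, to reduce monomorphisms in $Sh(\mathcal{Q})$ to monomorphisms in $PSh(\mathcal{Q})$, and then to invoke the standard pointwise description of monomorphisms in a presheaf category.

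First I would recall the general fact that in any category with pullbacks, $\eta : F \to G$ is a monomorphism if and only if the diagonal $\Delta_\eta : F \to F \times_G F$ into the kernel pair is an isomorphism. By the previous proposition, the pullback $F \times_G F$ computed in $PSh(\mathcal{Q})$ is already a sheaf and thus coincides with the one computed in $Sh(\mathcal{Q})$. Since the full and faithful inclusion $Sh(\mathcal{Q}) \hookrightarrow PSh(\mathcal{Q})$ preserves and reflects isomorphisms, $\Delta_\eta$ is an iso in $Sh(\mathcal{Q})$ iff it is an iso in $PSh(\mathcal{Q})$, and therefore $\eta$ is mono in $Sh(\mathcal{Q})$ iff it is mono in $PSh(\mathcal{Q})$. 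This gives the first assertion.

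For the pointwise characterization, I would use that $PSh(\mathcal{Q}) = Set^{\mathcal{Q}^{op}}$ has all limits computed pointwise, so $(F \times_G F)(u) = F(u) \times_{G(u)} F(u)$ in $Set$ and $\Delta_\eta$ is, componentwise, the diagonal of $\eta_u$. Since a natural transformation of presheaves is an isomorphism iff each of its components is a bijection, $\Delta_\eta$ is an iso iff each $\eta_u$ has bijective diagonal, iff each $\eta_u : F(u) \to G(u)$ is injective. Combining with the preceding paragraph yields the full statement.

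The main subtlety, and the reason the previous closure-under-limits proposition is invoked, is that a fully faithful functor in general only \emph{reflects} monomorphisms; passing from ``mono in $Sh(\mathcal{Q})$'' to ``mono in $PSh(\mathcal{Q})$'' is the non-trivial direction, and it is precisely the fact that $F \times_G F$ is the same object in both categories that makes the implication go through. No quantale-specific computation is needed beyond this.
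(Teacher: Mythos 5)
Your proposal is correct and follows essentially the same route as the paper: both characterize monomorphisms via the kernel pair (the paper phrases it as the kernel pair being $(F,\mathrm{id},\mathrm{id})$, you as the diagonal being an isomorphism, which is the same condition), both use closure of $Sh(\mathcal{Q})$ under limits to identify the kernel pair in $Sh(\mathcal{Q})$ with the one in $PSh(\mathcal{Q})$, and both then invoke the standard pointwise description of monomorphisms in a $Set$-valued functor category. Your write-up is in fact slightly more explicit than the paper's about why the passage from ``mono in $Sh(\mathcal{Q})$'' to ``mono in $PSh(\mathcal{Q})$'' is the direction that needs the closure-under-limits result.
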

\begin{proof}
Again, the argument is the same as in the case of sheaves on locales:  $\eta : F \rightarrowtail G$ is a mono if and only if the pullback of $\alpha$ with itself is
    % https://q.uiver.app/?q=WzAsNCxbMCwwLCJHIl0sWzEsMCwiRyJdLFswLDEsIkciXSxbMSwxLCJGIl0sWzEsMywiXFxhbHBoYSJdLFsyLDMsIlxcYWxwaGEiLDJdLFswLDEsImlkX0ciXSxbMCwyLCJpZF9HIiwyXSxbMCwzLCIiLDEseyJzdHlsZSI6eyJuYW1lIjoiY29ybmVyIn19XV0=
\[\begin{tikzcd}
	G & G \\
	G & F
	\arrow["\alpha", from=1-2, to=2-2]
	\arrow["\alpha"', from=2-1, to=2-2]
	\arrow["{id_G}", from=1-1, to=1-2]
	\arrow["{id_G}"', from=1-1, to=2-1]
	\arrow["\lrcorner"{anchor=center, pos=0.125}, draw=none, from=1-1, to=2-2]
\end{tikzcd}\]
In other words, $\eta : F \rightarrowtail G$ is a mono if and only $(G,id_G,id_G)$ is the kernel pair of $\alpha$. This holds in any category. So $\eta$ is a mono of sheaves iff the kernel pair of $\alpha$ is $(G,id_G,id_G)$ iff $\eta$ is a mono of the underlying presheaves. 
Next, we use another general fact about categories. For a small category $\mathcal{A}$ and a category with pullbacks $\mathcal{B}$, a morphism $\beta : F \to G $ in the functor category $Func(\mathcal{A},\mathcal{B})$ is mono iff all the components $\beta_a  : F(a) \to G(a)$ is injective for all object $a$ of $\mathcal{A}$ \cite[Corollary 2.15.3]{borceux1994handbook}. Take $\mathcal{A} = Q^{op}$ and $\mathcal{B}  = Set$, then we obtain that $\eta$ is mono iff $\eta_u$ is injective for all $u \in Q.$
\end{proof}

\begin{corollary}
  Every morphism $\eta : \Q(-,v) \to F$, where $F$ is a (pre)sheaf is, automatically, a monomorphism.
\end{corollary}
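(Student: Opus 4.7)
The plan is to reduce the statement to Proposition \ref{prop:mono_iff_injective} and then exploit the fact that $\Q$ is a poset category, so each hom-set $\Q(u,v)$ has cardinality at most one.

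First, I would invoke Proposition \ref{prop:mono_iff_injective}: a morphism of sheaves (or presheaves) is a monomorphism if and only if each of its components $\eta_u$ is an injective function of sets. So the task reduces to checking that, for every $u \in Q$, the map
\[
\eta_u : \Q(u,v) \longrightarrow F(u)
\]
is injective.

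Next, I would observe that $\Q$, being a quantale viewed as a poset category, has the property that $\Q(u,v)$ contains exactly one element if $u \leq v$ and is empty otherwise. In both cases $\mathrm{card}(\Q(u,v)) \leq 1$. Any function whose domain is either empty or a singleton is trivially injective (there are no distinct pairs of points in the domain to collapse), and therefore $\eta_u$ is injective for every $u \in Q$.

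Combining these two observations with Proposition \ref{prop:mono_iff_injective} gives that $\eta$ is a monomorphism in $Sh(\Q)$. There is no real obstacle here; the content of the corollary is entirely formal, resting on the poset-nature of $\Q$ and the componentwise characterization of monos, and it holds whether $F$ is a sheaf or only a presheaf, since the characterization of monomorphisms by injectivity of components applies verbatim in $PSh(\Q)$.
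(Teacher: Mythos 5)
Your proof is correct and follows exactly the paper's own argument: reduce to componentwise injectivity via Proposition \ref{prop:mono_iff_injective}, then note that $\Q(u,v)$ has at most one element, so each component is trivially injective. Nothing to add.
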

\begin{proof}
By Proposition \ref{prop:mono_iff_injective}, is enough to show that $\eta_u : Q(u,v) \to F(u)$ is injective for all $u\in Q$. This always holds because $Q(u,v)$ has at most one element.
\end{proof}

\begin{proposition}
$Sh(\mathcal{Q})$ has a set of generators.
\end{proposition}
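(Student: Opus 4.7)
The natural candidate for a set of generators is the family of representable sheaves $\mathcal{G} := \{\mathcal{Q}(-,v) : v \in Q\}$, which is a set indexed by $Q$. We have already verified that each $\mathcal{Q}(-,v)$ is indeed a sheaf (in the example following Proposition \ref{prop2.3.3}), so these objects live in $Sh(\mathcal{Q})$. My plan is to show that for any parallel pair of distinct morphisms in $Sh(\mathcal{Q})$, some member of $\mathcal{G}$ distinguishes them.

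The main tool is the Yoneda lemma. Since $\mathcal{Q}(-,v)$ is a representable presheaf and $Sh(\mathcal{Q}) \hookrightarrow PSh(\mathcal{Q})$ is a full subcategory (so $Hom$-sets computed in $Sh(\mathcal{Q})$ agree with those in $PSh(\mathcal{Q})$), we have a bijection
$$Hom_{Sh(\mathcal{Q})}(\mathcal{Q}(-,v), F) \cong F(v)$$
for every sheaf $F$ and every $v \in Q$, where an element $s \in F(v)$ corresponds to the natural transformation $\gamma^s : \mathcal{Q}(-,v) \to F$ with component $\gamma^s_u$ sending the unique arrow $(u \to v) \in \mathcal{Q}(u,v)$ to $s_{|_u} \in F(u)$, for each $u \leq v$; on $u \not\leq v$ the component is the unique map out of the empty set.

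Now take two distinct morphisms $\alpha, \beta : F \to G$ in $Sh(\mathcal{Q})$. Since $\alpha \neq \beta$ as natural transformations, there exist $v \in Q$ and $s \in F(v)$ with $\alpha_v(s) \neq \beta_v(s)$. Consider the associated $\gamma^s : \mathcal{Q}(-,v) \to F$. By naturality and the Yoneda correspondence, $\alpha \circ \gamma^s$ and $\beta \circ \gamma^s$ correspond respectively to $\alpha_v(s)$ and $\beta_v(s)$ in $G(v)$; since these are distinct and the Yoneda bijection is injective, we conclude $\alpha \circ \gamma^s \neq \beta \circ \gamma^s$. Hence $\mathcal{G}$ is a separating set, i.e., a set of generators for $Sh(\mathcal{Q})$.

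I do not expect any serious obstacle here: the only subtle point is ensuring that the Yoneda correspondence is available directly for the sheaf category (rather than only the presheaf category), but this is immediate since the inclusion $Sh(\mathcal{Q}) \hookrightarrow PSh(\mathcal{Q})$ is full and the representables $\mathcal{Q}(-,v)$ already lie in $Sh(\mathcal{Q})$, so no sheafification step is required (in contrast with the localic case, where one sometimes passes to the associated sheaf of a representable presheaf).
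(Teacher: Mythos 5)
Your proposal is correct, and it uses the same generating family as the paper — the representable sheaves $\mathcal{Q}(-,v)$ indexed by $v\in Q$ — but the separating argument you give is genuinely different from (and in fact sounder than) the one printed in the paper. The paper fixes $u$ to be \emph{the infimum element} of $\mathcal{Q}$ and tests $\eta,\eta'$ only against a map $\zeta:\mathcal{Q}(-,0)\to H$; this cannot work as written, because by Lemma \ref{lema2.1.6} a sheaf takes a singleton value at $0$, so $\eta_0=\eta'_0$ always and the composites $\eta\circ\zeta$ and $\eta'\circ\zeta$ coincide no matter what. Your argument instead locates a component $v$ and a section $s\in F(v)$ where $\alpha_v(s)\neq\beta_v(s)$, passes to the Yoneda transpose $\gamma^s:\mathcal{Q}(-,v)\to F$, and uses injectivity of the Yoneda bijection to conclude $\alpha\circ\gamma^s\neq\beta\circ\gamma^s$; the points you flag as potentially subtle (fullness of $Sh(\mathcal{Q})\hookrightarrow PSh(\mathcal{Q})$, and the fact that each $\mathcal{Q}(-,v)$ is already a sheaf so no sheafification is needed) are exactly the right ones and are both established earlier in the paper. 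In short, your proof supplies the standard Yoneda-style separation argument that the paper's version appears to be gesturing at but does not actually carry out correctly.
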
\label{proposition:Sh(Q)_has_generators}
\begin{proof}
The set of generators for $Sh(\mathcal{Q})$ is the family of representable sheaves $\mathcal{Q}(-,u)$, indexed by elements  of $\mathcal{Q}$.

Take $\eta, \eta' : H \to F$ two distinct morphisms of sheaves. Consider the index $u$ as the infimum element of $\mathcal{Q}$. Observe the following composition
% https://q.uiver.app/?q=WzAsMyxbMCwwLCJIb21fe1xcbWF0aGNhbHtRfX0oLSx1KSJdLFsxLDAsIkciXSxbMiwwLCJGIl0sWzAsMSwiXFx6ZXRhIl0sWzEsMiwiXFxldGEnIiwyLHsib2Zmc2V0IjoxfV0sWzEsMiwiXFxldGEiLDAseyJvZmZzZXQiOi0xfV1d
\[\begin{tikzcd}
	{\mathcal{Q}(-,u)} & H & F
	\arrow["\zeta", from=1-1, to=1-2]
	\arrow["{\eta'}"', shift right=1, from=1-2, to=1-3]
	\arrow["\eta", shift left=1, from=1-2, to=1-3]
\end{tikzcd}\]
Since there is no element smaller than $u$, the only component in which makes sense calculate $\zeta$ is the element $u$ himself. So we only have
% https://q.uiver.app/?q=WzAsMyxbMCwwLCJIb21fe1xcbWF0aGNhbHtRfX0odSx1KSJdLFsxLDAsIkciXSxbMiwwLCJGIl0sWzAsMSwiXFx6ZXRhX3UiXSxbMSwyLCJcXGV0YSdfdSIsMix7Im9mZnNldCI6MX1dLFsxLDIsIlxcZXRhX3UiLDAseyJvZmZzZXQiOi0xfV1d
\[\begin{tikzcd}
	{\mathcal{Q}(u,u)} & H & F
	\arrow["{\zeta_u}", from=1-1, to=1-2]
	\arrow["{\eta'_u}"', shift right=1, from=1-2, to=1-3]
	\arrow["{\eta_u}", shift left=1, from=1-2, to=1-3]
\end{tikzcd}\]
where $\mathcal{Q}(-,u)$ is (naturally isomorphic to) the identity map $u \to u$. Since $\eta \neq \eta '$, we conclude $\eta \circ \zeta \neq \eta' \circ \zeta$, as desired.

\end{proof}

\begin{proposition} \label{hom-proposition}
\begin{enumerate}
    \item For each $v,v' \in \Q$, there is at most one (mono)morphism $\Q(-,v) \to \Q(-, v')$ and this exists precisely when $v \leq v'$. 
    \item If $H$ is a sheaf and $\epsilon : H \rightarrowtail Q(-,v)$ is a monomorphism, then  $H \cong Q(-, h)$ where $h = \bigvee\{u \leq v: H(u) \neq \emptyset\}$.
\end{enumerate}
\end{proposition}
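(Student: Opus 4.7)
For part (1), since $Q(u, v')$ has at most one element for every $u \in Q$, any natural transformation $\eta : Q(-, v) \to Q(-, v')$ is forced at each component: $\eta_u$ must be the unique function $Q(u, v) \to Q(u, v')$ (counting the empty function when $Q(u, v) = \emptyset$), which gives uniqueness. Existence amounts to requiring $Q(u, v) \neq \emptyset \Rightarrow Q(u, v') \neq \emptyset$, i.e. $u \leq v \Rightarrow u \leq v'$ for every $u$; specializing to $u = v$ forces $v \leq v'$, and this condition is conversely sufficient. Naturality is automatic because all Hom-sets are subsingletons, and the monomorphism claim follows from the corollary to Proposition \ref{prop:mono_iff_injective}.

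For part (2), Proposition \ref{prop:mono_iff_injective} says each $\epsilon_u : H(u) \hookrightarrow Q(u, v)$ is an injection into a subsingleton, so every $H(u)$ has at most one element, and $H(u) \neq \emptyset$ forces $u \leq v$. Set $S = \{w \leq v : H(w) \neq \emptyset\}$ and $h = \bigvee S \leq v$. Note that $S$ is downward closed in $Q$, since the restriction maps $H(w) \to H(w')$ witness that $w' \leq w \in S$ implies $w' \in S$. The desired isomorphism $H \cong Q(-, h)$ reduces to the equivalence $H(u) \neq \emptyset \iff u \leq h$, since then on each component one has the only available bijection between two subsingleton sets, with naturality automatic. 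The direction $H(u) \neq \emptyset \Rightarrow u \leq h$ is immediate since $u \in S$ entails $u \leq \bigvee S = h$.

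The main step is the converse, where we use the sheaf property. For each $w \in S$ let $s_w$ be the unique element of $H(w)$, and consider the cover $h = \bigvee_{w \in S} w$. I claim $(s_w)_{w \in S}$ is a compatible family: for $w, w' \in S$, semicartesianity gives $w \odot w' \leq w \leq v$, and downward closure of $S$ gives $w \odot w' \in S$, so $H(w \odot w')$ is a singleton and both restrictions ${s_w}|_{w \odot w'}$ and ${s_{w'}}|_{w \odot w'}$ must agree. The sheaf condition then glues the family to some $s \in H(h)$, so $H(h) \neq \emptyset$; restriction along any $u \leq h$ then gives $H(u) \neq \emptyset$, completing the proof. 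The one delicate point, which the hypotheses dispatch cleanly, is verifying compatibility: without semicartesianity one could not guarantee $w \odot w' \in S$, and without downward closure of $S$ the family would not be definable as a family of unique elements.
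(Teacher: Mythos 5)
Your proof is correct and follows essentially the same route as the paper: part (1) by the subsingleton Hom-set argument with the specialization $u = v$, and part (2) by reducing to $H(u)\neq\emptyset \iff u\leq h$ and using the gluing of the (automatically compatible, by subsingleton-ness of each $H(w\odot w')$) family over the cover $h = \bigvee S$. The only cosmetic difference is that you spell out the compatible family explicitly where the paper phrases the same step as an equalizer of parallel arrows between singletons.
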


\begin{proof} 
\begin{enumerate}
    \item For each $u,v\in \Q$, note that $card (\Q(u,v)) \in \{0,1\}$. \\
Suppose there is a morphism $\eta : \Q(-,v) \to \Q(-,v')$. So, for all $u \in Q$ we have $\eta_u : \Q(u,v) \to \Q(u,v')$. If $\Q(u,v') = \emptyset$, then $\Q(u,v) = \emptyset$. Thus if $u \leq v$, then $u \leq v'$. In particular, for $u = v$, we obtain $v \leq v'$.

Conversely, if $v \leq v'$, consider $i_{v,v'} : \Q(-,v) \to \Q(-,v')$.  For all $u \in Q$ we have $i_{v,v'}(u) : \Q(u,v) \to \Q(u,v')$. 

If $u \nleq v$, then $Q(u,v) = \emptyset$ and $i_{v,v'}(u) : \emptyset \to Q(u,v')$ the unique function from the $\emptyset$, since the $\emptyset$ is an initial object in $Set$.

If $u \leq v$, since $v\leq v'$, $u \leq v'$ and then $i_{v,v'}(u) (u \to v) = (u \to v')$. For any other morphism $j_{v,v'} : \Q(-,v) \to \Q(-,v')$ we obtain that $j_{v,v'}(u) : \emptyset \to Q(u,v')$ the unique function from the $\emptyset,$ whenever $u \nleq v$ and $j_{v,v'}(u) (u \to v) = (u \to v')$, whenever $u\leq v$. So $i_{v,v'} = j_{v,v'}$.

\item Since $\epsilon$ is a monomophism, $\epsilon_u$ is injective and then $card(H(u)) \in \{0,1\}$ for each $u \in Q$ with $H(u)  = \emptyset$ whenever $u \nleq v$. So let $$h = \bigvee\{u \leq v: H(u) \neq \emptyset\} =  \bigvee\{u \in Q: H(u) \neq \emptyset\}.$$ 

We will show that $H(u)$ is non-empty only when $u \leq v$. Note that:

- If $q \leq p$ and $H(p) \neq \emptyset$, then $H(q) \neq \emptyset$ (since $H$ is a presheaf);

- Since $card(H(h)) =1$, we have $H(h)\neq \emptyset$. Once   
$H(p), H(q) \neq \emptyset$ entails $H(p \odot q) \neq \emptyset$, by the sheaf condition we have an equalizer diagram between two parallel arrows where the source and target are both singletons.

Therefore, $H(u) \neq \emptyset$ iff $u \leq h$. Now, we will show that $H(u) \to Q(u,h)$ is a (unique) bijection, for each $u \in Q$.

If $u \nleq h$, then $\emptyset = H(u) \to Q(u,h) =\emptyset$. If $u \leq h$, then $H(u)$ and $Q(u,h)$ are both singletons. So $\epsilon_u$ is an injection and a surjection in $Set$, therefore, a bijection for all $u \in Q$ and then $\epsilon$ is an isomorphism. 
\end{enumerate}
\end{proof}

\section{Sh(Q) is not a topos}\label{sec:comparing_with_Groth_topos}

We remind the reader that a Grothendieck topos is any category equivalent to the category $Sh(\mathcal{C},J)$ of sheaves on a category $\mathcal{C}$ with a Grothendieck (pre)topology $J$. The pair $(\mathcal{C},J)$ is called a site. In particular, $Sh(L)$ is a Grothendieck topos where $\mathcal{C} = L$ is the poset category by the locale $L$ and we define the  Grothendieck  pretopology by $\{f_i: U_i \to U\} \in J(U) \iff u = \bigvee_{i\in I}u_i$. However, if  $u = \bigvee_{i\in I}u_i$ and $v \leq U$ for $u, u_i, v$ in a semicartesian quantale, we may have $ v \neq  u\odot v  =\bigvee_{i\in I} u_i \odot v$. This means that the \textit{stability axiom} in the definition of a Grothendieck pretolopogy is not satisfied in the quantalic case. Nevertheless, there could exist another to provide that $Sh(Q)$ is a Grothendieck topos.

In this section we study deeper categorial properties that makes $Sh(Q)$ even more similar to a Grothendieck topos but we also show that it is not a Grothendieck topos. The argument relies on the fact every Grothendieck topos is an \textit{elementary topos} --- cartesian closed category with pullbacks, a terminal object, and a subobject classifier. It is well known that if $A$ is an object of a topos $\mathcal{E}$, then $Sub_{\mathcal{E}}(A)$ is a Heyting Algebra\footnote{The class of complete Heyting algebras and of locales coincide.} \cite[Proposition 6.2.1]{borceux1994handbook3}. We will prove that $Sub_{Sh(Q)}(Q(-,a)) \cong [0,a]$ is an isomorphism of quantales. Since $[0,a]$ is not a Heyting Algebra in general when $a$ is not an idempotent element, then $Sh(Q)$ is not a topos. 

The first property we want for $Sh(Q)$ is that it has a set $(G_i)_{i \in I}$ of strong generators. Remind that: 
\begin{definition}\cite[Definition 5.2.1]{borceux1994handbook2}
A category $\mathcal{M}$ is locally $\lambda$-presentable, for a regular cardinal $\lambda$, when
\begin{enumerate}
    \item $\mathcal{M}$ is cocomplete;
    \item $\mathcal{M}$ has a set $(G_i)_{i \in I}$ of strong generators;
    \item each generator $G_i$ is $\lambda$-presentable. 
\end{enumerate}
\end{definition}

It is known that any Grothendieck topos is a $\lambda$-locally presentable category for some regular cardinal $\lambda$ \cite[Proposition 3.4.16]{borceux1994handbook3}. We were able to show that $Sh(Q)$ also is $\lambda$-locally presentable, where $\lambda = max\{card(Q)^+, \aleph_0\}$, but we will let a detailed proof about this for a future paper. We do sketch the argument for the reader used with the terminology: use $y$ to denote the Yoneda embedding and consider a covering $\{u_i : i \in I\}$. The Day convolution gives a monoidal structure in $PSh(Q)$ such that we have ``projections'' $y(u_j) \otimes y(u_k)  \xrightarrow{p_1^{jk}} y(u_j)$ and $y(u_j) \otimes y(u_k) \xrightarrow{p_2^{jk}} y(u_k)$. Since $u_i \leq u$, for each $i\in I$ ,there is an arrow $y(u_i) \xrightarrow{\phi_i} y(u)$. Denote by $y(u_j)\underset{y(u)}\otimes y(u_k)$ the equalizer of $\phi_j \circ p_1^{jk}$ and $\phi_k \circ p_2^{jk}$. Thus we define a sieve $S(\{u_i\})$ of a covering $\{u_i : i \in I\}$ as the following coequalizer in $PSh(Q)$

$$\xymatrix{\coprod\limits_{j,k}y(u_j)\underset{y(u)}\otimes y(u_k)\ar@<.7ex>[r]^-{}\ar@<-.7ex>[r]_-{} & \coprod\limits_{i}y(u_i)\ar[r]^{} & S(\{u_i\}) }$$

with the coproduct on the left being taken over $y(u_j) \underset{y(u)}\otimes y(u_k) \rightarrowtail y(u_j)\otimes y(u_k) \cong y(u_i \odot u_j)$.
Then we orthogonalize the class of morphism $\{S(\{u_i\}) \to y(u) : u = \bigvee\limits_{i\in I}u_i\}$ and observe that such orthogonalization corresponds to the category of sheaves on $Q$. This shows that $Sh(Q)$ is a $\lambda$-orthogonality class in $PSh(Q)$. By the Theorem $1.39$ of \cite{adamek1994locally}, we conclude 

\begin{corollary}
    $Sh(Q)$ is:
    \begin{enumerate}
        \item a reflective subcategory of $PSh(Q)$ closed under $\lambda$-directed colimits,
        \item locally $\lambda$-presentable
    \end{enumerate}
\end{corollary}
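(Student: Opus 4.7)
The plan is to apply Theorem 1.39 of \cite{adamek1994locally}, which asserts that if $\mathcal{K}$ is a $\lambda$-locally presentable category and $\Sigma$ is a set of morphisms between $\lambda$-presentable objects of $\mathcal{K}$, then the orthogonality class $\Sigma^{\perp} \subseteq \mathcal{K}$ is reflective, closed under $\lambda$-directed colimits, and itself $\lambda$-locally presentable. Taking $\mathcal{K} = PSh(Q)$ and $\lambda = \max\{card(Q)^+, \aleph_0\}$, both conclusions of the corollary follow at once, provided we can exhibit $Sh(Q)$ as $\Sigma^{\perp}$ for a suitable set $\Sigma$ of morphisms between $\lambda$-presentable presheaves.

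First I would record that $PSh(Q) = Set^{Q^{op}}$ is $\lambda$-locally presentable: since $card(Q^{op}) < \lambda$, every representable $y(u)$ is $\lambda$-presentable, and $PSh(Q)$ is generated by representables under colimits. Then, for each cover $\{u_i : i \in I\}$ of $u \in Q$, the presheaf $S(\{u_i\})$ constructed via the indicated coequalizer of a coproduct of $y(u_j) \underset{y(u)}\otimes y(u_k)$ is again $\lambda$-presentable, because the index sets have cardinality at most $card(Q)^2 < \lambda$ and $\lambda$-presentable objects are closed under $\lambda$-small colimits. Since the collection of covers is a subset of $P(Q)$, the class $\Sigma = \{S(\{u_i\}) \to y(u) : u = \bigvee_i u_i\}$ is genuinely a set of morphisms, and all its members have $\lambda$-presentable domain and codomain.

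The substantive step is the identification $Sh(Q) = \Sigma^{\perp}$. By the universal properties of coequalizers, coproducts and Day convolution, a presheaf $F$ is right-orthogonal to $S(\{u_i\}) \to y(u)$ iff the diagram
\[
F(u) \longrightarrow \prod_i F(u_i) \rightrightarrows \prod_{j,k} F(u_j \odot u_k)
\]
is an equalizer. The key input here, and the point where semicartesianity of $Q$ is used, is the isomorphism $y(u_j) \otimes y(u_k) \cong y(u_j \odot u_k)$ under Day convolution, which converts the tensor appearing in $S(\{u_i\})$ into the quantale multiplication used in Definition \ref{sheaf_as_functor}. This is exactly the content I would expect to carry out in detail in the promised future paper; the main obstacle is not conceptual but rather a careful bookkeeping of the Day convolution and the interplay between the fibered tensor $y(u_j) \underset{y(u)}\otimes y(u_k)$ and the naive tensor $y(u_j) \otimes y(u_k) \cong y(u_j \odot u_k)$, ensuring that the coequalizer presentation of $S(\{u_i\})$ reproduces precisely the two parallel arrows $p$ and $q$ of the sheaf equalizer.

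Granting $Sh(Q) = \Sigma^{\perp}$, item (1) of the corollary follows from the reflectivity and closure-under-$\lambda$-directed-colimits parts of Theorem 1.39, and item (2) follows from its local $\lambda$-presentability conclusion. No further work is required once the orthogonality identification is in place.
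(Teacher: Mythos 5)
Your proposal follows essentially the same route as the paper: exhibit $Sh(Q)$ as the orthogonality class $\Sigma^{\perp}$ for the set of morphisms $S(\{u_i\}) \to y(u)$ built from the Day convolution, and then invoke Theorem 1.39 of \cite{adamek1994locally}; the paper likewise defers the detailed verification that orthogonality to $\Sigma$ coincides with the sheaf equalizer condition to future work. Your added bookkeeping (the cardinality bound making the $S(\{u_i\})$ $\lambda$-presentable and $\Sigma$ a set) is a reasonable filling-in of the same argument, not a different one.
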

The first item provides that the inclusion functor $i: Sh(Q) \to PSh(Q)$ has a left adjoint functor $a: PSh(Q) \to Sh(Q)$, which we will call \textbf{sheafification functor}. The second item provides that $Sh(Q)$ has a set of strong generators. 

First, we use the sheafification to show that $Sh(Q)$ admits a monoidal (closed) structure. We will use the notions of \textit{normal reflective embedding} and \textit{normal enrichment for a reflective embedding}. The definitions are available at \cite{day1973note} but we reproduce they here with a different notation: 
\begin{definition}   
    Let $\psi \dashv \phi: D \to B$ be an adjoint pair.
    \begin{enumerate}
        \item $\psi \dashv \phi$ is a  reflective embedding if $\phi$ is full and faithful on morphism.
        \item When $B$ has a fixed monoidal closed structure the reflective embedding is called \textbf{normal} if there exists a monoidal closed structure on $D$ and monoidal functor structures on $\psi$ and $\phi$ for which $\phi$ is a normal closed functor and the unit and counit of the adjunction are monoidal natural transformations.
    \end{enumerate}
\end{definition}

Given that $B$ is a category with a fixed monoidal structure, by saying that a reflective embedding $\psi \dashv \phi: D \to B$ \textit{admits a  normal enrichment} we mean that there are conditions for $\psi \dashv \phi$ be normal. In particular, the functor $\psi$ carries the (closed) monoidal structure from $B$ to $D$ in a compatible and unique (up to monoidal isomorphism) way, see \cite{day1973note} for details.  Also, the reader may find the definition of a normal closed functor in \cite{barr1969adjunction} or be satisfied by the statement that the inclusion functor is a normal closed functor. Then we state  Proposition  1.1 in \cite{day1973note}, with a different notation.

\begin{proposition}\label{prop:3.1_day_monoidal_localization}
Let $C = (C,\otimes)$ be a small monoidal category.  A reflective embedding $\psi \dashv \phi: D \to PSh(C)$ admits
normal enrichment if and only if the functor $F(U\otimes -)$ is isomorphic to some object in $D$ whenever $F$ is a object of $D$ and $U$ is an object of $C$.
\end{proposition}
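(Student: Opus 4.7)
The plan is to identify the hypothesis with the condition that $D$ is an \emph{exponential ideal} of $PSh(C)$ equipped with its Day convolution monoidal closed structure, and then invoke the general principle that a reflective subcategory of a monoidal closed category inherits a normal monoidal closed enrichment precisely when it is an exponential ideal.

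First I would recall the Day convolution on $PSh(C)$ and, via the standard coend calculation, derive the formula for its internal hom,
$$[F,G](V) \;\cong\; PSh(C)(F,\,G(V\otimes -)).$$
Specializing $F = y(U)$ and using the Yoneda lemma yields $[y(U),G](V) \cong G(V\otimes U)$, viewed as a presheaf in $V$. Modulo the conventions for Day convolution, the hypothesis ``$F(U\otimes -)$ is isomorphic to an object of $D$ whenever $F\in D$ and $U\in C$'' thus says exactly that $[y(U), \phi F]_{PSh(C)} \in D$ for every $U \in C$ and $F \in D$. The forward direction is then immediate: if $\psi\dashv\phi$ is a normal enrichment, the closed-functor comparison $\phi[A,B]_D \cong [\phi A,\phi B]_{PSh(C)}$ is an iso, so taking $A=\psi y(U)$ and $B=F$ places $[y(U),\phi F]$ in the essential image of $\phi$.

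For the reverse direction I would first upgrade the hypothesis from representables to arbitrary first slot: writing any $H \in PSh(C)$ as a colimit $H \cong \mathrm{colim}_i\, y(U_i)$, using that internal hom turns colimits in the first slot into limits, and concluding $[H,G] \cong \lim_i [y(U_i),G] \in D$ because $D$ is reflective and hence closed under limits in $PSh(C)$. So $D$ is an exponential ideal. Now I would transport the structure, setting $F \otimes_D G := \psi(\phi F \otimes \phi G)$, $[F,G]_D$ as the unique object of $D$ with $\phi[F,G]_D \cong [\phi F,\phi G]_{PSh(C)}$ (well defined since the target is in $D$ and $\phi$ is fully faithful), and unit $\psi I$ where $I$ is the Day convolution unit. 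The closed adjunction on $D$ follows by composing $\psi\dashv\phi$ with the closed adjunction in $PSh(C)$; associators, unitors, and their coherences descend via $\psi$; and $\phi$ becomes a normal closed functor by construction.

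The delicate step, and the main obstacle, is verifying that the reflector $\psi$ is strong monoidal, equivalently that the comparison $\psi(\phi F\otimes \phi G) \otimes_D \psi X \to \psi(\phi F\otimes \phi G\otimes X)$ is a coherent iso for every $X \in PSh(C)$. This reduces to showing that the unit $\eta_X:X \to \phi\psi X$ becomes an iso after reflecting $X\otimes \phi G$ for every $G\in D$, which follows from the exponential-ideal property by the Yoneda-style computation
$$PSh(C)(\phi\psi X\otimes \phi G, \phi H) \cong PSh(C)(\phi\psi X, [\phi G,\phi H]) \cong PSh(C)(X, [\phi G,\phi H]) \cong PSh(C)(X\otimes \phi G, \phi H),$$
where the middle isomorphism uses $[\phi G,\phi H] \in D$ together with the universal property of $\eta_X$. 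Once $\psi$ is strong monoidal, monoidality of the unit and counit and the pentagon and triangle identities on $D$ are automatic, completing the normal enrichment.
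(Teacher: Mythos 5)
The paper does not actually prove this proposition: it is quoted, with changed notation, as Proposition 1.1 of Day's note on monoidal localisation \cite{day1973note}, so there is no internal argument to compare yours against. What you have written is, in substance, the standard proof of Day's reflection theorem specialised to Day convolution on $PSh(C)$, and the outline is sound: the identification of the hypothesis with the exponential-ideal condition via $[y(U),G]$, the upgrade to arbitrary first arguments using that a reflective subcategory is closed under limits, and above all the key computation showing that $\psi(\eta_X\otimes \phi G)$ is invertible, which is exactly the lemma on which Day's own proof turns. Two points deserve attention. First, in the forward direction, setting $A=\psi y(U)$ in the comparison $\phi[A,B]_D\cong[\phi A,\phi B]$ yields $[\phi\psi y(U),\phi F]\in D$, not $[y(U),\phi F]\in D$ as you claim; the clean route is the direct chain $PSh(C)(Y,[X,\phi B])\cong PSh(C)(Y\otimes X,\phi B)\cong D(\psi Y\otimes_D\psi X,B)\cong PSh(C)(Y,\phi[\psi X,B]_D)$, which exhibits $[X,\phi B]\cong\phi[\psi X,B]_D$ for \emph{every} $X$, in particular for $X=y(U)$. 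Second, for a non-symmetric $C$ the hom right adjoint to $-\otimes G$ evaluated at a representable computes $G(-\otimes U)$ rather than $G(U\otimes -)$, so one must either invoke the other half of the biclosed structure or assume symmetry; your ``modulo conventions'' hedge covers this, and in the paper's application $C=Q$ is a commutative quantale, so the issue is moot there.
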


\begin{proposition}
    The sheafification $a: PSh(Q) \to Sh(Q)$ admits a normal enrichment.
\end{proposition}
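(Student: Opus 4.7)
The plan is to invoke Day's criterion (Proposition \ref{prop:3.1_day_monoidal_localization}) directly on the adjunction $a \dashv i : Sh(Q) \hookrightarrow PSh(Q)$. Since $Q$ is a small monoidal category under $\odot$, the fixed monoidal structure on $PSh(Q)$ is the corresponding Day convolution, which on representables restricts to the quantalic multiplication: $\mathcal{Q}(-,u) \otimes \mathcal{Q}(-,v) \cong \mathcal{Q}(-, u \odot v)$. The criterion then reduces to verifying that, for every sheaf $F$ and every object $u \in Q$, the presheaf $F(u \otimes -)$ lies (up to isomorphism) in $Sh(Q)$.

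First, I would unpack the presheaf $F(u \otimes -)$ explicitly in our setting. Under the identification above, this is precisely the functor $v \mapsto F(u \odot v)$, with restriction maps induced by the fact that $w \leq v$ entails $u \odot w \leq u \odot v$ (a consequence of $\odot$ being increasing on both entries, Remark \ref{remark_quantale_properties}.1). This is exactly the presheaf $F^{(u)}$ already constructed in Proposition \ref{expcand-pr}.

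Second, and this is where essentially all the work has been done in advance, Proposition \ref{expcand-pr} asserts that $F^{(u)}$ is a sheaf whenever $F$ is, using the fact that a cover $v = \bigvee_i v_i$ in $Q$ induces a cover $u \odot v = \bigvee_i (u \odot v_i)$ (sups distribute over $\odot$) together with the sheaf property of $F$ on that cover. Thus the hypothesis of Proposition \ref{prop:3.1_day_monoidal_localization} is met, and the reflective embedding $a \dashv i$ admits a normal enrichment; in particular the monoidal closed structure of $PSh(Q)$ descends to $Sh(Q)$ in a compatible way. No genuine obstacle appears here: the content of the proposition has been absorbed into Proposition \ref{expcand-pr}, so the only care required is to correctly match Day's abstract hypothesis with the concrete construction $F \mapsto F^{(u)}$.
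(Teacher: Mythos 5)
Your proposal is correct and follows essentially the same route as the paper: both identify $F(u\otimes -)$ with the presheaf $F^{(u)}$ of Proposition \ref{expcand-pr} and then apply Day's criterion (Proposition \ref{prop:3.1_day_monoidal_localization}) to the reflective embedding $a \dashv i$. Your version merely spells out the identification of the Day tensor on representables in slightly more detail than the paper does.
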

\begin{proof}
    In this case, $C = Q$ is the posetal category of quantales. By Propositon \ref{expcand-pr}, the functor $F(u \odot -)$ is a sheaf for every $u \in Q$, whenever $F$ is a sheaf.
    By \ref{prop:3.1_day_monoidal_localization}, the reflective embedding $a \dashv i$ admits a normal enrichment.
\end{proof}

\textbf{Monoidal structure in $ Sh(Q)$:} The above result gives that $Sh(Q)$ has a monoidal closed structure where  $F \otimes G := a(i(F) \otimes_{Day} i(G))$, for $F,G$ sheaves on $Q$. 

Now, we state two results from Borceux to obtain more information about $Sh(Q)$:

\begin{proposition}\cite[Proposition 4.5.15]{borceux1994handbook}
If $\mathcal{C}$ has finite limits and possesses a strong set of generators, so $\mathcal{C}$ is well-powered (i,e., for all $C$ object of $\mathcal{C}$, the subobjects  $Sub(C)$ of $\mathcal{C}$ forms a set).
\end{proposition}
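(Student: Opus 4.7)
The plan is to associate to each subobject $A \rightarrowtail C$ a subset of a fixed set built from the generators, in such a way that the assignment is injective on equivalence classes of subobjects. Fix a strong set of generators $(G_i)_{i\in I}$ of $\mathcal{C}$. For each subobject $m: A \rightarrowtail C$ define
$$S(A) := \{(i, f) : i \in I,\ f \in \mathrm{Hom}(G_i, C),\ f \text{ factors through } m\}.$$
This is a subset of $T_C := \coprod_{i \in I} \mathrm{Hom}(G_i, C)$, which is a genuine set since $\mathcal{C}$ is locally small and $I$ is a set. Note $S$ is well defined on equivalence classes of subobjects: if $m$ and $m'$ represent the same subobject, the factorizations agree.

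The key step is to show that $S$ is injective on subobjects, and this is where both hypotheses are used. Suppose $m: A \rightarrowtail C$ and $m': A' \rightarrowtail C$ satisfy $S(A) = S(A')$. Using that $\mathcal{C}$ has finite limits, form the pullback $A \wedge A' \rightarrowtail C$. By the universal property of the pullback, a morphism $f : G_i \to C$ factors through $A \wedge A'$ iff it factors through both $A$ and $A'$; equivalently, $S(A \wedge A') = S(A) \cap S(A') = S(A)$. Consider the induced mono $n: A \wedge A' \rightarrowtail A$. For every $h: G_i \to A$, the composite $m \circ h: G_i \to C$ lies in $S(A) = S(A \wedge A')$, so it factors through $A \wedge A' \rightarrowtail C$; by uniqueness in the pullback, $h$ itself factors through $n$. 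Thus every morphism from a generator into $A$ factors through the mono $n$. By the strong generator hypothesis applied to $A$, the mono $n$ must be an isomorphism, so $A \wedge A' \cong A$ as subobjects of $C$. Symmetrically $A \wedge A' \cong A'$, hence $A$ and $A'$ represent the same subobject.

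Therefore $S$ descends to an injection $\mathrm{Sub}(C) \hookrightarrow \mathcal{P}(T_C)$, and since $\mathcal{P}(T_C)$ is a set, $\mathrm{Sub}(C)$ is a set. The main obstacle, and the only place where the two hypotheses interact, is the pullback-lifting step used to conclude that $n$ is an isomorphism; everything else is bookkeeping. Observe that finite limits enter only to guarantee the existence of the intersection $A \wedge A'$, while strongness is exactly what is needed to convert the pointwise factorization through $n$ into the statement that $n$ is invertible.
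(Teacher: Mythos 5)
Your proof is correct and is essentially the argument of the cited reference: the paper itself gives no proof but defers to Borceux, whose Proposition 4.5.15 is proved by exactly this encoding of a subobject $m\colon A\rightarrowtail C$ as the set of pairs $(i,f)$ with $f\colon G_i\to C$ factoring through $m$, followed by the intersection-and-strong-generator argument to get injectivity. Nothing to add.
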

\begin{proposition}\cite[Corollary  4.2.5]{borceux1994handbook}
In a complete and well-powered category, $Sub(C)$ has all infima/intersections and suprema/unions.
\end{proposition}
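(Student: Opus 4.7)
The plan is to construct the two operations separately, relying on completeness to get arbitrary intersections directly and then bootstrapping from intersections (together with well-poweredness) to get unions.

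For the infimum of a family $(m_i \colon S_i \hookrightarrow C)_{i \in I}$ of subobjects, I would take the wide pullback of all the monos $m_i$ over $C$; this limit exists by completeness. Call the apex $S$ with induced map $m \colon S \to C$. I would argue $m$ is a monomorphism in two steps: first, any projection $S \to S_i$ postcomposed with $m_i$ equals $m$, so if $f,g \colon X \rightrightarrows S$ satisfy $m \circ f = m \circ g$, then for each $i$ the compositions into $S_i$ agree (since $m_i$ is mono), and then $f = g$ by the pullback's universal property. The subobject $(S,m)$ is then the infimum: any subobject $(T,t) \hookrightarrow C$ contained in every $(S_i,m_i)$ produces a cone on the pullback diagram, and the unique factorization through $S$ exhibits $(T,t) \leq (S,m)$.

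For the supremum I would use the standard "upper set" trick. Given the family $(S_i, m_i)_{i \in I}$, define
\[
\mathcal{U} \;=\; \{\,(T,t) \in Sub(C) \,:\, (S_i,m_i) \leq (T,t)\ \text{for every } i \in I\,\}.
\]
Well-poweredness ensures $Sub(C)$ is a set, hence so is $\mathcal{U}$; moreover $(C, \mathrm{id}_C) \in \mathcal{U}$, so $\mathcal{U}$ is non-empty. By the infimum construction already established, $\bigwedge \mathcal{U}$ exists in $Sub(C)$. I would verify it is the supremum of the family: it lies in $\mathcal{U}$ (as the infimum of $\mathcal{U}$), so it is an upper bound of the $(S_i,m_i)$; conversely, any upper bound is by definition a member of $\mathcal{U}$, hence sits above $\bigwedge \mathcal{U}$, giving the universal property.

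The only subtle point in the plan, and the step I would be most careful about, is confirming that the wide pullback of monomorphisms is a monomorphism in the stated generality; this is classical but deserves explicit mention since it is what makes the infimum live in $Sub(C)$ rather than merely in the slice $\mathcal{C}/C$. Everything else is essentially a direct translation of the universal properties of limits into the partial-order language of $Sub(C)$, and the supremum step is then automatic once infima are in hand.
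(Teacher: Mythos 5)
Your proof is correct and follows exactly the standard argument that the paper delegates to the cited reference (Borceux, Corollary 4.2.5): intersections as wide pullbacks of monomorphisms, which exist by completeness (and are small diagrams thanks to well-poweredness), and unions as the infimum of the set of upper bounds. The paper gives no proof of its own here, so there is nothing further to compare.
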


\begin{corollary}
  $Sh(Q)$ is a complete and well-powered category, and for all $F$ sheaf on $Q$, $Sub(F)$ has all infima/intersections and suprema/unions.
\end{corollary}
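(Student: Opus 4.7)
The plan is to assemble the Corollary directly from results already established in this section, together with the two cited Borceux propositions. There are essentially three ingredients to verify: that $Sh(Q)$ is complete, that it has a strong set of generators, and then to invoke the Borceux propositions in sequence.

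First I would recall completeness. This was already proved earlier in the section: $Sh(Q) \hookrightarrow PSh(Q) = Set^{Q^{op}}$ is closed under arbitrary limits, and the presheaf category is complete with limits computed pointwise, so $Sh(Q)$ is itself complete. Next I would point to the strong generators. The earlier discussion established that $Sh(Q)$ is locally $\lambda$-presentable (for $\lambda = \max\{card(Q)^+, \aleph_0\}$), and by the very definition of local presentability a locally $\lambda$-presentable category carries a set of strong generators. Thus the hypotheses of \cite[Proposition 4.5.15]{borceux1994handbook} (finite limits plus a strong set of generators) are in force.

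Applying that proposition yields that $Sh(Q)$ is well-powered, i.e.\ for every sheaf $F$ the class $Sub(F)$ is a (legitimate) set. Combining well-poweredness with completeness, the hypotheses of \cite[Corollary 4.2.5]{borceux1994handbook} are met, and it follows that $Sub(F)$ admits all infima (as intersections, computed via pullbacks/equalizers) and all suprema (as the image of the coproduct of the inclusions, which exists because the category is complete and well-powered). This gives both conclusions of the corollary simultaneously.

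I do not expect a genuine obstacle: the work has been done earlier. The only point that merits care is making sure one is entitled to cite local presentability here, since its proof was only sketched; but the Corollary is stated as a consequence of the earlier results and the two Borceux propositions, so a careful bookkeeping of the implications (complete $+$ strong set of generators $\Rightarrow$ well-powered; complete $+$ well-powered $\Rightarrow$ $Sub(F)$ is a complete lattice) is all that is needed.
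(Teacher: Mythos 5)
Your proposal is correct and follows exactly the route the paper intends: completeness from the earlier closure-under-limits result, strong generators from local $\lambda$-presentability, then the two cited Borceux propositions in sequence. (Only a cosmetic quibble: in Borceux's Corollary 4.2.5 unions are obtained as intersections of upper bounds rather than as images of coproducts, since cocompleteness is not assumed there — but this does not affect your argument, which rests on the citation.)
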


\begin{corollary}

{\bf Factorization of morphisms in} $Sh(Q)$:

For each morphism $\phi: F \to G$ in $Sh(Q)$, there exists the least subobject of $G$, represented by $\iota : G' \rightarrowtail G$, such that $\phi = \iota \circ \phi'$ for some (and thus, unique) morphism $\phi' : F \to G'$. Moreover, $\phi'$ is an {\em epimorphism}.

\end{corollary}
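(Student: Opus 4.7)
The plan is to use the set-theoretic control granted by well-poweredness (so that ``the collection of subobjects through which $\phi$ factors'' is legitimately a set) together with the existence of arbitrary intersections of subobjects stated in the previous corollary. Concretely, consider the collection
\[
\mathcal{S}_\phi := \{\,[\iota_K : K \rightarrowtail G] \in Sub(G) : \phi \text{ factors through } \iota_K\,\}.
\]
By the preceding corollary $Sub(G)$ is a set, so $\mathcal{S}_\phi$ is a set; and $\mathcal{S}_\phi$ is nonempty because $[id_G] \in \mathcal{S}_\phi$. Define $\iota : G' \rightarrowtail G$ to be the intersection $\iota := \bigwedge \mathcal{S}_\phi$, which exists since $Sub(G)$ is complete.

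Next I would verify that $\phi$ itself factors through this intersection. Realize $G'$ as the wide pullback of the diagram formed by all the $\iota_K$ with $[\iota_K]\in \mathcal{S}_\phi$. For every such $K$, fix the (unique, since $\iota_K$ is mono) factorization $\phi_K : F \to K$ with $\iota_K \circ \phi_K = \phi$. Because all the composites $\iota_K \circ \phi_K$ coincide with $\phi$, the universal property of the wide pullback (equivalently, of the intersection in $Sub(G)$) yields a unique $\phi' : F \to G'$ such that $\iota \circ \phi' = \phi$. By the very definition of $G'$ as the infimum of $\mathcal{S}_\phi$, this $[\iota]$ is the least element of $\mathcal{S}_\phi$: any other subobject of $G$ admitting a factorization of $\phi$ lies above $[\iota]$ in $Sub(G)$.

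For the final clause, that $\phi'$ is epic, I would use the standard equalizer trick. Suppose $\alpha, \beta : G' \to H$ satisfy $\alpha \circ \phi' = \beta \circ \phi'$, and let $e : E \rightarrowtail G'$ be their equalizer in $Sh(\mathcal{Q})$ (which exists by completeness of $Sh(\mathcal{Q})$; the equalizer of parallel arrows is always monic). Then $\phi'$ factors through $e$ as $\phi' = e \circ \phi''$, hence $\phi = \iota \circ \phi' = (\iota \circ e) \circ \phi''$. Since $\iota \circ e$ is a composite of monos it represents a subobject of $G$ belonging to $\mathcal{S}_\phi$, and minimality of $[\iota]$ gives $[\iota] \leq [\iota \circ e]$ in $Sub(G)$. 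This forces the inclusion $e$ of subobjects of $G'$ to be an isomorphism, so $\alpha = \beta$, and $\phi'$ is epic. The only real subtlety in the argument is the verification that $\phi$ actually factors through the intersection $G'$, which is where the wide-pullback/limit viewpoint on intersections of subobjects is essential; the remaining steps are formal and hold in any complete, well-powered category.
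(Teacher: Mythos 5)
Your proposal is correct and follows essentially the same route as the paper: obtain $\iota$ as the intersection (wide pullback) of all subobjects of $G$ through which $\phi$ factors, using completeness and well-poweredness from the preceding corollary, and then prove $\phi'$ is epic by the equalizer trick combined with minimality of $[\iota]$. You merely spell out the existence step that the paper compresses into ``by the previous results,'' and your epi argument is the same as the paper's up to phrasing (minimality in $Sub(G)$ versus extremality of $\iota$).
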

  \begin{proof}
By the previous results, there exists the  extremal factorization above $\phi = \iota \circ \phi'$, such that $\iota : G' \rightarrowtail G$ is a mono. To show that $\phi' : F \to G'$ is an epi, consider $\eta, \epsilon: G' \to H$ such that $\eta \circ \phi' =  \epsilon \circ \phi'$ and let $\gamma  : H' \rightarrowtail G'$ be the equalizer of $\eta, \epsilon$. Then, by the universal property of $\gamma$, there exists a unique $\phi'' : F \to H'$ such that $\gamma \circ \phi'' = \phi'$. On the other hand, by the extremality of $\iota$, there exists a unique $\gamma ' : G' \to H'$ such that $\iota = \iota \circ \gamma \circ \gamma'$. Since $\iota$ is a mono, we obtain that $\gamma \circ \gamma' = id_{G'}$, Thus $\gamma$ is a mono that is a retraction: this means that $\gamma = eq(\eta, \epsilon)$ is an iso, i.e., $\epsilon = \eta$. Thus $\phi'$ is an epi.
\end{proof}

\begin{remark} \label{balanced-re} Keeping the notation above, if $\phi : F \to G$ is already a mono then, by the extremality of $\iota : G' \rightarrowtail G$, $\phi \cong \iota$ and thus $\phi' : F \to G'$ is an isomorphism.  It is natural to ask ourselves if the converse holds in general. Conversely, does it hold that any morphism that is mono and epi is an iso? This would mean that the category $Sh(Q)$ is {\em balanced}. 

Any category with factorizations (extremal mono, epi) and  where all the monos are regular (i.e., monos are equalizers) is balanced. A ``topos-theoretic" way to show that all monos are regular is to show that there exists a ``universal mono" $true : 1 \rightarrowtail \Omega$ that is a  subobjects classifier. We will address this question in the next section.

\end{remark}

\begin{theorem} \label{isoposet-th} Assume that $Q$ is unital.
We have the following isomorphisms of complete lattices: 
\begin{align*}
  h_Q\colon  Q & \to  Represented(Sh(Q))  \\
  q & \mapsto Q(-, q) 
\end{align*}
\begin{align*}
    i_Q\colon   Represented(Sh(Q)) &\to Representable(Sh(Q))/isos \\
Q(-,q) & \mapsto [Q(-, q)]_{iso}
\end{align*}
\begin{align*}
    j_Q\colon Representable(Sh(Q))/isos & \to Sub(\textbf{1}) \\ 
    [R]_{iso} & \mapsto [R \cong Q(-,q) \rightarrowtail Q(-,1) \cong \textbf{1}]_{iso}
\end{align*}

Thus
$k_Q = j_Q \circ i_Q \circ h_Q: Q \to Sub(\textbf{1})$ is an isomorphism of complete lattices.  

More generally, take any $a \in Q$, we may amend the  map $k_Q$ in a way that it sends $b \in [0,a]$ to $ [Q(-,b) \rightarrowtail Q(-,a)]_{iso}$, then we obtain a quantalic isomorphism $k_a : [0,a] \to Sub(Q(-,a))$.

\end{theorem}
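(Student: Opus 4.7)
The plan is to verify that each of $h_Q, i_Q, j_Q$ is a bijection and that the composition $k_Q$ is order-preserving in both directions; since any order isomorphism between complete lattices is automatically a complete lattice isomorphism, this already yields the main claim. The localized statement for $k_a$ then follows by the same three-step argument with $1$ replaced by $a$, and the quantalic content is extracted by tracing the monoidal structure on $Sh(Q)$ through representables.

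\textbf{Bijectivity of the three maps.} The map $h_Q$ is surjective by the definition of ``represented'' and injective because $Q(-,q_1)=Q(-,q_2)$ forces, by evaluation at $q_1$ and $q_2$, both $q_1\leq q_2$ and $q_2\leq q_1$. The map $i_Q$ is bijective since by Proposition \ref{hom-proposition}.(1) two representables are isomorphic iff morphisms exist both ways iff $q_1\leq q_2\leq q_1$. For $j_Q$: well-definedness uses that every morphism out of a representable is automatically a mono (corollary of Proposition \ref{prop:mono_iff_injective}); surjectivity is Proposition \ref{hom-proposition}.(2), which identifies any $H \rightarrowtail \mathbf{1}$ with some $Q(-,h)$, and the triangle over $\mathbf{1}$ commutes for free because $\mathbf{1}(u)=Q(u,1)$ is a singleton at every level, leaving no room for distinct comparison morphisms; injectivity of $j_Q$ then follows from that of $i_Q\circ h_Q$.

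\textbf{Order, completeness, and the quantalic structure.} Proposition \ref{hom-proposition}.(1) gives $q_1\leq q_2$ in $Q$ iff there is a (necessarily unique) morphism $Q(-,q_1)\to Q(-,q_2)$, which unpacks precisely to $k_Q(q_1)\leq k_Q(q_2)$ in $Sub(\mathbf{1})$. Hence $k_Q$ is an order isomorphism of complete lattices, so it preserves arbitrary infs and sups. For general $a\in Q$, the same three-step argument applies verbatim, using that Proposition \ref{hom-proposition}.(2) classifies subobjects of $Q(-,a)$ as representables $Q(-,b)$ with $b\leq a$. For the quantalic part, the monoidal structure on $Sh(Q)$ (sheafified Day convolution) satisfies $Q(-,b_1)\otimes Q(-,b_2)\cong Q(-,b_1\odot b_2)$ on representables, since the Yoneda embedding is monoidal into $PSh(Q)$ and representables are already sheaves. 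The induced multiplication of subobjects of $Q(-,a)$---realized by pushing $Q(-,b_1\odot b_2)$ along the natural mono into $Q(-,a)$, available because $b_1\odot b_2\leq b_i\leq a$ by semicartesianity---is therefore exactly $k_a(b_1\odot b_2)$, so $k_a$ is quantalic.

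\textbf{Expected obstacle.} The delicate step is upgrading the abstract sheaf-isomorphism provided by Proposition \ref{hom-proposition}.(2) to an isomorphism in the slice $Sh(Q)/Q(-,a)$, i.e.\ one compatible with the chosen monomorphisms. For $a=1$ this is automatic because every $Q(u,1)$ is a singleton; for general $a$ one must argue that any two natural comparison maps into $Q(-,a)$ agree, which again reduces to the singleton-at-each-level property of representables in degrees $\leq a$. A secondary subtlety is identifying the tensor of representables in $Sh(Q)$ with the representable of the $\odot$-product: this requires that sheafification be compatible with Day convolution, which is precisely the normal enrichment of the reflection established earlier via Proposition \ref{prop:3.1_day_monoidal_localization}.
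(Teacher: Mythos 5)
Your proposal is correct and follows essentially the same route as the paper: each of $h_Q$, $i_Q$, $j_Q$ is shown to be surjective, order-preserving and order-reflecting via Proposition \ref{hom-proposition} (with the singleton-at-each-level property of representables making all triangles over $Q(-,a)$ commute automatically), and completeness of $Q$ upgrades the resulting order isomorphism to a complete-lattice isomorphism. The only difference is organizational: the paper's own proof of this theorem stops at the lattice isomorphism and defers the Day-convolution computation $Q(-,b_1)\otimes Q(-,b_2)\cong Q(-,b_1\odot b_2)$, which establishes that $k_a$ respects the $*$ operation, to Theorem \ref{subobj-th}, whereas you fold that step into the same argument.
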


  \begin{proof}
We will just show that $h_Q, i_Q, j_Q$ are isomorphisms of posets, and, since $Q$ is a complete lattice, then $h_Q, i_Q, j_Q$ are complete lattices isomorphisms.

\textit{$h_Q$ is isomorphism:}  By the very definition of represented functor, the map $h_Q$ is surjective. For injectivity see that  $Q(-,q) = Q(-,p)$ implies that  $Q(u,q) = Q(u,p)$, for all $u \in Q$, and so $p=q$. Yoneda's lemma and Proposition  \ref{hom-proposition}.1 establishes that it preserves and reflects order since $p\leq q$ iff there is some (unique) (mono)morphism $\eta : Q(-,p) \to Q(-,q)$.

\textit{$i_Q$ is isomorphism:} Since it is a quotient map, it is surjective. $i_Q$ is injective: by Proposition \ref{hom-proposition}.2, $Q(-,p) \cong Q(-,q)$ implies $p = q$ and thus $Q(-,p) = Q(-,q)$. The map preserves and reflects order: this is a direct consequence of  Proposition \ref{hom-proposition}.1. 

\textit{$j_Q$ is isomorphism:} Since $! : Q(-,1) \to \textbf{1} $ is an isomorphism, we will just prove that \  
$j'_Q\colon Representable(Sh(Q))/isos \to Sub(Q(-,1))$ \
    $[R]_{iso}  \mapsto 
    [R \cong Q(-,q) \rightarrowtail Q(-,1)]_{iso}$ is an isomorphism. By the very definition of $Sub(F) = Mono(F)/isos$,  it is clearly injective. Take take $\eta : R \rightarrowtail Q(-,1)$, by Proposition \ref{hom-proposition}.1, $R \cong Q(-,q)$, thus $j'_Q$ is surjective. Now let $R$ and $R'$ be representable functors, there is a morphism $\eta : R \to R'$ iff this morphism is unique and it is a monomorphism, thus $j'_Q$ preserves and reflects order.

\end{proof}

\begin{definition}
For each $F$ sheaf on $Q$, we define the following binary operation on $Sub(F)$: Given $\phi_i : F_i \rightarrowtail F$, $i =0,1$ define $\phi_0 * \phi_1 : F_0 * F_1  \rightarrowtail F$ as the mono in the extremal factorization of $F_0 \otimes_F F_1 \rightarrowtail F_0 \otimes F_1 \rightrightarrows F$.
\end{definition}

\begin{theorem}\label{subobj-th}
For each $a \in Q$, the poset $Sub(Q(-,a))$, endowed with the binary operation $*$ defined above  is a commutative and semicartesian quantale. Moreover the poset isomorphism $k_a: [0,a] \to Sub(Q(-,a))$, $q \mapsto [Q(-, q)]_{iso}$, established in Theorem \ref{isoposet-th}, is a quantale isomorphism.
\end{theorem}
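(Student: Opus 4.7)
The plan is to reduce the verification of the quantale axioms and the preservation of $\odot$ by $k_a$ to an explicit computation on representable sheaves. By Proposition \ref{hom-proposition}.2, every subobject of $Q(-,a)$ is uniquely of the form $Q(-,q) \rightarrowtail Q(-,a)$ with $q \leq a$, so Theorem \ref{isoposet-th} already supplies $k_a$ as a complete lattice isomorphism; the essential remaining task is to check that under $k_a$, the binary operation $*$ corresponds to $\odot$.

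The first step is to identify the tensor of two representables in $Sh(Q)$. Since the Yoneda embedding $Q \hookrightarrow PSh(Q)$ is strong monoidal for the Day convolution induced by $\odot$, a straightforward coend/ninja-Yoneda computation gives $Q(-,q_0) \otimes_{Day} Q(-,q_1) \cong Q(-, q_0 \odot q_1)$ in $PSh(Q)$. Because the right-hand side is already a sheaf (being representable) and the sheafification $a$ acts as the identity on sheaves, the same isomorphism descends to $Sh(Q)$:
$$Q(-,q_0) \otimes Q(-,q_1) \;\cong\; Q(-, q_0 \odot q_1).$$

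The second step computes $\phi_0 * \phi_1$ when $\phi_i = k_a(q_i)\colon Q(-,q_i) \rightarrowtail Q(-,a)$. The two parallel arrows $F_0 \otimes F_1 \cong Q(-, q_0 \odot q_1) \rightrightarrows Q(-,a)$ appearing in the defining diagram (given by $\phi_0 \circ \pi_1$ and $\phi_1 \circ \pi_2$, using the semicartesian projections) land in a representable functor; since $\mathrm{Hom}(Q(-,q_0 \odot q_1), Q(-,a))$ has at most one element, they coincide. Their equalizer $F_0 \otimes_F F_1$ therefore equals the entire tensor $Q(-, q_0 \odot q_1)$, and the induced map into $Q(-,a)$ is already monic by the corollary following Proposition \ref{prop:mono_iff_injective}. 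The extremal (epi, mono) factorization is thus trivial, yielding
$$k_a(q_0) * k_a(q_1) \;=\; [Q(-, q_0 \odot q_1) \rightarrowtail Q(-,a)]_{iso} \;=\; k_a(q_0 \odot q_1).$$

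Finally, $([0,a], \leq, \odot)$ is itself a commutative and semicartesian quantale: it is closed under arbitrary suprema inside $Q$ because $a$ is the top element, closed under $\odot$ by semicartesianity, and it inherits associativity, commutativity, the distributive laws, and the inequality $q_0 \odot q_1 \leq q_0, q_1$ from $Q$. Transporting this structure along the lattice isomorphism $k_a$, whose compatibility with multiplication was established above, produces the commutative semicartesian quantale structure on $(Sub(Q(-,a)), \leq, *)$ and simultaneously certifies $k_a$ as a quantale isomorphism. The technical kernel of the argument is the strong monoidality of Yoneda for Day convolution together with the fact that sheafification preserves representables; once these are in hand, the poset nature of $Q$ collapses every nontrivial diagrammatic step into an equality of unique morphisms, which is where the main obstacle would otherwise lie.
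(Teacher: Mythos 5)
Your proposal is correct and follows essentially the same route as the paper: identify $Q(-,q_0)\otimes Q(-,q_1)\cong Q(-,q_0\odot q_1)$ via Day convolution on representables, observe that the parallel pair into the representable $Q(-,a)$ must coincide because the relevant hom-set has at most one element, conclude that the extremal factorization is trivial, and hence that $k_a$ intertwines $*$ with $\odot$. Your explicit closing step --- checking that $[0,a]$ is itself a commutative semicartesian quantale and transporting the structure along $k_a$ --- is a detail the paper leaves implicit, but it is the intended reading.
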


  \begin{proof} 

     As a consequence of the proof of Theorem \ref{isoposet-th}, this map is well-defined, bijective and preserves and reflects orders, thus it is a complete lattice isomorphism. It remains to show that $Q(-,u \odot v) \cong Q(-, u) * Q(-,v)$, for all $u, v \leq a$. We have that  $Q(-, u) * Q(-,v) \rightarrowtail Q(-,a)$ is the mono in the extremal factorization of the arrow
$$Q(-,u) \otimes_{Q(-,a)} Q (-,v)  \xrightarrow{equ} Q(-,u) \otimes Q (-,v)  \rightrightarrows Q(-,a)$$

By Day convolution,
$$Q(-,u) \otimes Q (-,v) \cong Q(-,u\odot v)$$

Since $u \odot v \leq a$, by Proposition \ref{hom-proposition}, there is unique (mono)morphism $Q(-,u \odot v)  \rightarrow Q(-,a)$. 
So $Q(-,u) \otimes Q (-,v)  \rightrightarrows Q(-,a)$ corresponds to $Q(-,u \odot v)  \rightrightarrows Q(-,a)$. Thus, the parallel arrows coincide and then
$$Q(-,u) \otimes_{Q(-,a)} Q (-,v)  \cong Q(-,u) \otimes Q (-,v)  \cong Q(-,u \odot v)$$

Hence, the arrow
$$Q(-,u) \otimes_{Q(-,a)} Q (-,v)  \to Q(-,a) $$

is isomorphic with the unique mono
$$Q(-,u \odot v)  \rightarrowtail Q(-,a). $$

This shows that $Q(-,u \odot v) \cong Q(-, u) * Q(-,v)$, as we wish.

\end{proof}

This has an interesting direct application:

\begin{corollary}
    Let $Q$ be the quantale of ideals of a ring $R$, then $Q$ is isomorphic to $Sub(Q(-,R))$    
\end{corollary}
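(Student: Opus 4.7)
The plan is to observe that this corollary is an essentially immediate specialization of Theorem \ref{subobj-th}. Recall that in the quantale $Q = \mathcal{I}(R)$ of ideals of the commutative unital ring $R$, the top element is precisely the unit ideal $R$ itself; moreover $\mathcal{I}(R)$ is a unital (integral) commutative semicartesian quantale, so the hypotheses of Theorem \ref{isoposet-th} and Theorem \ref{subobj-th} are satisfied.

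First I would take $a := R \in Q$ in the statement of Theorem \ref{subobj-th}. Since $R$ is the top element $\top$ of the quantale $\mathcal{I}(R)$, the principal down-set $[0,a] = [0,R]$ coincides with the whole underlying set of $Q$, and the order and multiplication inherited on $[0,R]$ from $Q$ are exactly the original order and multiplication of $Q$. Hence the map
\[
k_R \colon Q = [0,R] \longrightarrow Sub(Q(-,R)), \qquad q \mapsto [Q(-,q) \rightarrowtail Q(-,R)]_{iso},
\]
produced by Theorem \ref{subobj-th} is already a quantale isomorphism between $Q$ and $Sub(Q(-,R))$, with no further argument required.

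There is no real obstacle here beyond checking that the interval construction $[0,R]$ literally gives back $Q$ as a quantale, which is immediate from semicartesianity together with the fact that $R = 1 = \top$ in the integral quantale $\mathcal{I}(R)$ (see Remark \ref{quantale-re}.(2)). Thus the proof consists of a single line invoking Theorem \ref{subobj-th} at $a = R$.
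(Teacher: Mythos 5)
Your proposal is correct and matches the paper's (implicit) argument exactly: the corollary is stated as a direct application of Theorem \ref{subobj-th}, obtained by taking $a = R = \top$ in $\mathcal{I}(R)$ so that $[0,a]$ is all of $Q$ and $k_R$ is the required quantale isomorphism. Your additional remark that $\mathcal{I}(R)$ is integral (hence $R = 1 = \top$) and that $[0,R]$ recovers $Q$ with its original structure is a correct and welcome justification of the step the paper leaves unstated.
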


So we can recover any ideal of $R$ by analyzing the subobjects of the sheaf $Q(-,R)$.

\vspace{0.5cm}

{\bf Summarizing this section:}\\ 
Take any commutative, semicartesian and unital quantale $Q$ that has no underlying localic structure --  for example,  let $\mathds{Z}[x]$ be the ring of integer polynomials. This is an example of a ring that is not a \textit{Prüfer domain}, thus we do not have $I\cap(J+K)=I\cap J+I\cap K$, for nonzero ideals. In other words, the quantale of ideals $\mathcal{I}(\mathds{Z}[x])$ is an example of a quantale with no underlying localic structure -- and consider the sheaf $\textbf{1} \cong Q(-,1)$. By the Theorems \ref{isoposet-th} and \ref{subobj-th}, $Sub(Q(-,1)) \cong Sub(\textbf{1})$ is isomorphic to $Q$ as a quantale. Since $Q$ is not a locale, then $Sub(\textbf{1})$ is not a Heyting algebra (locale = complete Heyting algebra). Therefore, from a well-known result in topos theory, we can conclude that $Sh(Q)$ is not even an elementary topos.

\section{On the subobject classifier} \label{sec:truthvalues}

The subobject classifier in a category may be seen as it \textit{internal truth values object}. By definition, every elementary topos has a subobject classifier and we use it to construct the internal logic of a topos. In the category on sheaves on a locale $L$, the subobject classifier is the sheaf $\Omega(u) = \{q \in L : q \leq u\}$ such that for all $v \leq u$, we map $q$ to $q\wedge v$, and $\top : \textbf{1} \to \Omega$ defined by $\top_u(*) := u$ is the ``universal subobject". Thus, for every $F$  sheaf on $L$ we have a natural isomorphism $\eta_F : Sub(F) \to Sh(L)(F, \Omega)$ that sends equivalence class of monos $m: S \to F$ to its unique characteristic map $\chi_m : F \to \Omega$ (the proof of \cite[Proposition 1, Chapter I.3]{maclane1992sheaves} verifies that $\eta_F$ is a natural iso).  In particular $Sh(L)(\textbf{1}, \Omega) \cong Sub(\textbf{1}) \cong L$  explains how $\Omega$, which is the ``internal truth values object" of $Sh(L)$,  encodes the ``external  truth values" of $Sh(L)$, the locale $L \cong Sub(\textbf{1})$.

For general semicartesian and commutative quantales, just replacing the infimum by the quantalic multiplication does not yields a sheaf. Here we present two constructions -- $\Omega^-$ and $\Omega^+$ -- that provides approaches of a subobject classifier in $Sh(Q)$, in different subclasses of commutative semicartesian quantales, and pointing advantages and drawbacks of each one.

\begin{proposition}\label{classifiercand(-)-ex}
Let $Q$ be a commutative, semicartesian and  {\em geometric} quantale. For each $u \in Q$ define $\Omega^-(u) = \{q \in Q\,:\, q \odot u^- = q\} $
then, with the restriction map 
\begin{align*}
    \Omega^-(u) &\rightarrow \Omega^-(v)\\
     q\,\,\,\, &\mapsto q \odot v^-
\end{align*}
 for all $v\leq u$ in $Q$, $\Omega$ is a sheaf.
\end{proposition}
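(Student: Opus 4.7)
The plan is to verify the sheaf condition in two stages: first checking that $\Omega^-$ is a well-defined presheaf, and then showing that compatible families glue uniquely. Throughout, I expect the key technical tool to be the algebraic properties of $(-)^-$ collected in Proposition \ref{prop:properties_of_idempotentaprox}, especially idempotency of $u^-$, multiplicativity $(a \odot b)^- = a^-\odot b^-$, and monotonicity $p\leq q \Rightarrow p^-\odot q^- = p^-$. The geometric hypothesis $\bigvee_i q_i^- = (\bigvee_i q_i)^-$ will only enter in one spot, but crucially.

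First I would verify the presheaf axioms. If $q \in \Omega^-(u)$ and $v \leq u$, then $(q \odot v^-)\odot v^- = q \odot v^-$ by idempotency of $v^-$, so the restriction lands in $\Omega^-(v)$. The identity axiom is immediate from the defining equation $q \odot u^- = q$. For functoriality, given $w \leq v \leq u$, the two routes yield $q \odot v^- \odot w^-$ and $q \odot w^-$; these agree because $w^- \leq v^-$ entails $w^- \odot v^- = w^-$.

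Next I would tackle the sheaf condition for a cover $u = \bigvee_{i \in I} u_i$. Given a compatible family $(q_i \in \Omega^-(u_i))_i$, compatibility unfolds, using $(u_i\odot u_j)^- = u_i^-\odot u_j^-$ and $q_i \odot u_i^- = q_i$, to the clean identity $q_i \odot u_j^- = q_j \odot u_i^-$ for all $i,j$. The natural candidate for the gluing is $q := \bigvee_i q_i$. The main computational step, and the place where the \emph{geometric} hypothesis is essential, is to show $q \in \Omega^-(u)$: using geometry one has $u^- = \bigvee_j u_j^-$, so distributivity gives
\[
q\odot u^- \;=\; \bigvee_{i,j}(q_i \odot u_j^-),
\]
and each term $q_i \odot u_j^-$ is $\leq q_i$ by semicartesianity, while the diagonal terms $q_i\odot u_i^- = q_i$ recover each $q_i$; hence the double supremum equals $\bigvee_i q_i = q$. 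Without the geometric hypothesis this step fails, which is consistent with Example \ref{ex:non_geometric_quantale}.

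Then I would verify $q \odot u_i^- = q_i$ by the same mechanism: $q\odot u_i^- = \bigvee_j(q_j \odot u_i^-) = \bigvee_j(q_i \odot u_j^-)$ by compatibility, and the argument above (with $i$ fixed) collapses this to $q_i$. Uniqueness is the shortest part: if $q, q' \in \Omega^-(u)$ have the same restrictions to every $u_i$, then
\[
q \;=\; q \odot u^- \;=\; \bigvee_i (q\odot u_i^-) \;=\; \bigvee_i(q'\odot u_i^-) \;=\; q'\odot u^- \;=\; q',
\]
again invoking $u^- = \bigvee_i u_i^-$. I anticipate no hidden difficulty beyond the geometric step, which is really the whole point of introducing that subclass of quantales.
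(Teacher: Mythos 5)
Your proposal is correct and follows essentially the same route as the paper's proof: presheaf axioms via idempotency of $(-)^-$, separatedness and the membership $q=\bigvee_i q_i\in\Omega^-(u)$ both hinging on the geometric identity $u^-=\bigvee_i u_i^-$, and the restriction check via compatibility. Your reformulation of compatibility as $q_i\odot u_j^-=q_j\odot u_i^-$ and the ``diagonal term'' collapse of the double supremum are just a slightly tidier packaging of the same computations.
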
 
  \begin{proof}
Note that $q \odot v^- \in \Omega^-(v)$ since $q \odot v^- \odot v^- = q \odot v^-$. It is a presheaf because $q \odot u^-  = q$ and, given $w \leq v \leq u$, $q \odot v^- \odot w^- = q \odot w^-$. The separability also holds: suppose $u = \bigvee_{i \in I} u_i $ and take $p, q \in \Omega^-(u)$ such that $p_{\restriction_{u_i}} = q_{\restriction_{u_i}} $ for all $i \in I.$ Then
\begin{align*}
    p &= p\odot u^- = p\odot (\bigvee_{i \in I} u_i)^- = p \odot \bigvee_{i \in I} u_i^- = \bigvee_{i \in I}p \odot  u_i^-\\
    &  = \bigvee_{i \in I}q \odot  u_i^- = q \odot \bigvee_{i \in I} u_i^- = q \odot (\bigvee_{i \in I} u_i)^- =  q\odot u^- \\
    & = q
\end{align*}

The gluing is $q = \bigvee_{i \in I} q_i$, where $q_i \in \Omega^-(u_i)$ for each $i \in I$. Observe that $q \in \Omega^-(u)$:

$$q \odot u^- = \bigvee_{i \in I}q_i \odot (\bigvee_{j \in I}u_j)^- = \bigvee_{i \in I}q_i \odot \bigvee_{j \in I}u_j^- =\bigvee_{i \in I}q_i \odot u_i^- \odot \bigvee_{j \in I}u_j^- = \bigvee_{i \in I} q_i = q. $$

Where we used that the quantale is geometric in the second equality, the fact $q_i \in \Omega^-(u_i)$ in the third, and the idempotence of $u_i^-$ in the forth.

Now we check that $q$ is the gluing. 
One the one hand 
$$q_j = q_j \odot u_j^- \leq q \odot u_j^- = q_{\restriction_{u_j}} $$

On the other hand, recording that $(u \odot v)^- = (u^- \odot v^-)$ by Proposition \ref{prop:properties_of_idempotentaprox}.10,
\begin{align*}
     q_{\restriction_{u_j}} &= q \odot u_j^- = (\bigvee_{i \in I}q_i) \odot u_j^- = \bigvee_{i \in I}(q_i \odot u_j^-) = \bigvee_{i \in I}(q_i \odot u_i^- \odot u_j^-) \\
     &= \bigvee_{i \in I}(q_i \odot (u_i \odot u_j)^-) = \bigvee_{i \in I}q_{i_{\restriction_{u_i\odot u_j}}} = \bigvee_{i \in I}q_{j_{\restriction_{u_i\odot u_j}}} \\
     &= \bigvee_{i \in I}(q_j \odot (u_i \odot u_j)^-) = \bigvee_{i \in I}(q_j \odot u_i^- \odot u_j^-) = (\bigvee_{i \in I}u_i^-) \odot q_j \odot u_j^-\\
     &=(\bigvee_{i \in I}u_i)^-\odot q_j = u^- \odot q_j \leq q_j.
\end{align*}
\end{proof}
\begin{remark}
\begin{enumerate}

\item The  mapping $Q \mapsto \Omega^-$  preserves products and interval constructions (see Proposition \ref{prop.productand interval_of_sheaves}).

\item Note that for each $v, u \in Q$, such that $v^- = u^-$, then $\Omega^-(v) = \Omega^-(u)$. In particular, if $u^-\leq v\leq u$, then $\Omega^-(v) = \Omega^-(u)$ and, moreover, $\Omega^-(u^-,v) = \Omega^-(v, u) = id_{\Omega^-(v)}$.

 \item  For each $u \in Q$, let $\bot_u, \top_u : 1(u) \to \Omega^-(u)$, where  where $\bot_u(*) := 0 \in \Omega^-(u)$ and $\top_u(*) :=  u^- \in \Omega^-(u)$. Then $\bot, \top : 1 \to \Omega^-$ are natural transformations. 

    \item For each $u \in Q$ and $v \in \Omega^-(u)$ we have $v^- \in \Omega^-(u)$: this defines a map $-_u : \Omega^-(u) \to \Omega^-(u)$, Then $( \ )^- :=(-_u)_{u \in Q} : \Omega^- \to \Omega^-$ is a natural transformation and $\top^- := (\ )^- \circ \top = \top$, $\bot^- := (\ )^- \circ \bot = \bot$.

    \item 
    If $Q$ is a locale, then $\Omega^-(u) = \{q \in Q \, : \, q \odot u^- = q\} = \{q \in Q \,:\, q \leq u\} = \Omega_{0}(u)$, and $\top_u(*) = u^- = u$. Thus  $\top : 1 \to \Omega^-$ coincides with the subobject classifier in the category of sheaves on locales \cite[Theorem 2.3.2]{borceux1994handbook3}. We will readdress  this subject bellow.
\end{enumerate}
\end{remark}

Our investigations did not lead to $\Omega^-$ being a  subobjects classifier, but it does classifies the dense subobjects:  

\begin{definition}
A morphism of sheaves $\eta : G \to F$ is {\bf dense} whenever
$\forall u \in Q \forall y \in F(u) \exists m \in Q, u^-\leq m \leq u$ such that $F(m \leq u)(y) \in range(\eta_m) $ iff $y \in range(\eta_u)$
\end{definition}

Note that, since $m \leq u$, $y \in range(\eta_u)\implies F(m\leq u)(y) \in range(\eta_m)$.

It can be easily verified that a sufficient condition to a morphism of sheaves $\eta : G \overset\cong\to F$ be a dense is:
$\forall u \in Q  \exists m \in Q, u^-\leq m \leq u$ such that the diagram below is a pullback:

% https://q.uiver.app/?q=WzAsNCxbMCwwLCJHKHUpIl0sWzEsMCwiRih1KSJdLFswLDEsIkcobSkiXSxbMSwxLCJGKG0pIl0sWzAsMSwiXFxldGFfdSJdLFsyLDMsIlxcZXRhX20iXSxbMCwyLCJHKG1cXGxlcSB1KSIsMl0sWzEsMywiRihtXFxsZXEgdSkiXV0=
\[\begin{tikzcd}[ampersand replacement=\&]
	{G(u)} \& {F(u)} \\
	{G(m)} \& {F(m)}
	\arrow["{\eta_u}", from=1-1, to=1-2]
	\arrow["{\eta_m}", from=2-1, to=2-2]
	\arrow["{G(m\leq u)}"', from=1-1, to=2-1]
	\arrow["{F(m\leq u)}", from=1-2, to=2-2]
\end{tikzcd}\]

\begin{example} \label{dense-ex}
\begin{enumerate}
\item Every isomorphism $\eta : G \overset\cong\to F$ is a dense (mono)morphism.

\item  If a point $\pi : 1 \to F$ is such that $\forall u \in Q  \exists m \in Q, u^-\leq m \leq u, F(m\leq u) : F(u) \to F(m)$ is bijective, then  $\pi : 1 \to F$ is a dense monomorphism. In particular, every point $\pi : 1 \to \Omega^-$ is a dense monomorphism. %In particular, $\bot, \top : 1 \to \Omega^-$  are dense monomorphisms.

\item Let $a, b \in Q$. If $b \leq a$, let $\eta : Q(-,b) \to Q(-, a)$ be the unique monomorphism (a inclusion, in fact). Then $\eta$ is a dense monomorphism iff   $\forall u \in Q \forall y \in [u,a] \exists m \in Q, u^- \leq m \leq u, (y \in [m, b] \Leftrightarrow y \in [u,b])$; therefore, taking $m = u$, we have that $Q(-,b) \hookrightarrow Q(-, a)$ is a dense inclusion.
\end{enumerate}
\end{example}

We register the following (straightforward) result:
\begin{proposition} \label{dense} A pullback of a dense (mono)morphism in $Sh(Q)$ is a dense (mono)morphism.
\end{proposition}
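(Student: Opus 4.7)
The plan is to exploit two facts: (i) monomorphisms are stable under pullback in any category, which immediately handles the ``mono'' part; and (ii) pullbacks in $Sh(Q)$ are computed pointwise, since $Sh(Q) \hookrightarrow PSh(Q)$ is closed under limits and limits in presheaf categories are taken componentwise. So given a dense $\eta : G \to F$ and any $\phi : H \to F$, the pullback sheaf $P := G\times_F H$ satisfies $P(u) = \{(g,h) \in G(u) \times H(u) : \eta_u(g) = \phi_u(h)\}$, and the pulled-back map $\eta' : P \to H$ is, at each $u$, the second projection $(g,h) \mapsto h$.

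The main step is verifying the density condition for $\eta'$ at an arbitrary $u \in Q$ and $h \in H(u)$. The idea is to transport the problem across $\phi$ to the given dense morphism. Set $y := \phi_u(h) \in F(u)$, and apply density of $\eta$ to the pair $(u,y)$ to extract some $m \in Q$ with $u^- \leq m \leq u$ such that
\[
F(m\leq u)(y) \in \mathrm{range}(\eta_m) \iff y \in \mathrm{range}(\eta_u).
\]
I claim the same $m$ works for $\eta'$ at $(u,h)$.

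The two equivalences needed are pure unwinding. First, $h \in \mathrm{range}(\eta'_u)$ iff there is $g \in G(u)$ with $(g,h) \in P(u)$, i.e.\ with $\eta_u(g) = \phi_u(h) = y$, so $h \in \mathrm{range}(\eta'_u) \iff y \in \mathrm{range}(\eta_u)$. Second, using naturality of $\phi$ we have $\phi_m \circ H(m\leq u) = F(m\leq u) \circ \phi_u$, so $\phi_m(H(m\leq u)(h)) = F(m\leq u)(y)$; hence $H(m\leq u)(h) \in \mathrm{range}(\eta'_m)$ iff there is $g' \in G(m)$ with $\eta_m(g') = F(m\leq u)(y)$, iff $F(m\leq u)(y) \in \mathrm{range}(\eta_m)$. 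Chaining these two reformulations through the equivalence supplied by density of $\eta$ gives density of $\eta'$.

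The hard part is really just bookkeeping: keeping track of which square is being pulled back and making sure the witness $m$ depends only on $u$ (not on $h$), which it does because it depends only on $u$ and $y = \phi_u(h) \in F(u)$, and the density clause quantifies over all $y \in F(u)$ with the same $m$-choice available for each. No delicate quantale-theoretic input about $u^-$ is needed beyond the fact that $\eta$ already supplies an admissible $m$ in the interval $[u^-, u]$, which is then reused verbatim for $\eta'$.
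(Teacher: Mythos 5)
Your proof is correct: the paper itself records this proposition as a ``straightforward'' result and omits the argument entirely, and your pointwise-pullback computation plus the transport of the density witness $m$ along $\phi$ via naturality is exactly the intended routine verification. One small remark: your closing worry about $m$ depending on $h$ is a non-issue, since the paper's definition of density quantifies $\forall u\,\forall y\,\exists m$, so the witness is allowed to depend on $y=\phi_u(h)$ and hence on $h$.
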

\begin{theorem} \label{classifier(-)-th} 
Suppose that $Q$ is a (commutative, semicartesian and) geometric quantale. Then the sheaf $\Omega^-$ introduced in Example \ref{classifiercand(-)-ex} essentially  classifies the dense subobject in the category $Sh(Q)$. More precisely:
\begin{enumerate}
    \item  $\top : 1 \to \Omega^-$, given by $\top_u : \{*\} \to \Omega^-(u), \top_u(*) = u^-$ determines a dense monomorphism in $Sh(Q)$.
    \item For each dense monomorphism of sheaves $m: S \rightarrowtail F$, there is a unique  morphism $\chi^m : F \to \Omega^-$, such that $\chi_m^- = \chi_m$, and such the diagram below is a pullback. 
    Moreover, for each morphisms $\phi, \phi' : F \to \Omega^-$  that determine  pullback diagrams, it holds: ${\phi}^- = {\phi'}^-$.
\end{enumerate}

\end{theorem}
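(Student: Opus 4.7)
The proof has two essentially independent halves. The first is a short density check for $\top$; the second is the construction of characteristic maps for dense subobjects, for which the natural candidate is essentially forced by Yoneda together with the requirement that $\chi_m$ be idempotent-valued.

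\textbf{Part 1: $\top$ is a dense mono.} Each component $\top_u \colon \{\ast\}\to \Omega^-(u)$, $\ast\mapsto u^-$, is injective for trivial reasons, so $\top$ is a monomorphism by Proposition \ref{prop:mono_iff_injective}. To show density I apply the sufficient condition in Example \ref{dense-ex}.2 with the witness $m:=u^-$: by Proposition \ref{prop:properties_of_idempotentaprox}.7, $(u^-)^-=u^-$, and so the restriction $\Omega^-(u)\to \Omega^-(u^-)$, $q\mapsto q\odot(u^-)^-=q\odot u^-=q$, is the identity map between two identical sets; in particular it is bijective, which is more than enough for the pullback-square criterion giving density.

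\textbf{Part 2: Construction of $\chi_m$.} Given a dense mono $m\colon S\rightarrowtail F$ and $y\in F(u)$, write
\[
  V_y^u \ :=\ \{\,v\leq u \ : \ F(v\leq u)(y)\in \operatorname{range}(m_v)\,\},\qquad
  \chi_m^u(y) \ :=\ \bigvee\{\,v^- \ : \ v\in V_y^u\,\}.
\]
Since each $v^-$ is idempotent and $\leq u^-$, and $\operatorname{Idem}(Q)$ is closed under sups inside $Q$ (Proposition \ref{q-_is_right_adjoint}.1), $\chi_m^u(y)\in \operatorname{Idem}(Q)$ with $\chi_m^u(y)\leq u^-$, hence lies in $\Omega^-(u)$, and $\chi_m^- = \chi_m$ holds automatically.

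\textbf{Part 3: Naturality.} I note two closure properties of $V_y^u$: it is closed under restriction (using injectivity of $m$ and functoriality of $F,S$), and closed under arbitrary sups (the family $\{s_v\in S(v)\}_{v\in V_y^u}$ provided by $m_v(s_v)=F(v\leq u)(y)$ is compatible, and its gluing in the sheaf $S$ is mapped by $m_{\bigvee V_y^u}$ to $F(\bigvee V_y^u\leq u)(y)$ by the sheaf condition on $F$). Fixing $w\leq u$ and using Proposition \ref{prop:properties_of_idempotentaprox}.10 together with the fact that $\operatorname{Idem}(Q)$ is a locale (so $v^-\odot w^- = v^-\wedge w^-$), the geometric hypothesis reduces naturality to the identity $\bigvee_{v\in V_y^u}(v^-\wedge w^-) \ =\ \bigvee\{v'^-\,:\,v'\in V_y^u\cap[0,w]\}$, which is immediate from the down-closure of $V_y^u$.

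\textbf{Part 4: Pullback property.} Put $v^\ast:=\bigvee V_y^u$, so $\chi_m^u(y)=(v^\ast)^-$ by geometricity. If $y\in\operatorname{range}(m_u)$ then $u\in V_y^u$, forcing $v^\ast=u$ and $\chi_m^u(y)=u^-$. Conversely, assume $\chi_m^u(y)=u^-$; then $(v^\ast)^-=u^-$, and sup-closure of $V_y^u$ gives $v^\ast\in V_y^u$, i.e.\ $F(v^\ast\leq u)(y)\in\operatorname{range}(m_{v^\ast})$, with $u^-\leq v^\ast\leq u$. Applying the density definition to $y$ produces an $m'\in[u^-,u]$ satisfying the equivalence $F(m'\leq u)(y)\in\operatorname{range}(m_{m'}) \Leftrightarrow y\in\operatorname{range}(m_u)$; combining the sheaf-compatibility of $S$ at the level $v^\ast\vee m'$ with the preservation properties of $(-)^-$ produces a gluing witnessing $m'\in V_y^u$, whence $y\in\operatorname{range}(m_u)$ by density. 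This identifies the pullback of $\top$ along $\chi_m$ with $m$.

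\textbf{Part 5: Uniqueness.} If $\phi,\phi'\colon F\to \Omega^-$ both make the square a pullback, then for every $y\in F(u)$, $\phi_u(y)=u^-\Leftrightarrow y\in\operatorname{range}(m_u)\Leftrightarrow \phi'_u(y)=u^-$. Post-composing with $(-)^-\colon\Omega^-\to\Omega^-$, which, by Proposition \ref{prop:properties_of_idempotentaprox}.6, sends $q\in\Omega^-(u)$ to the largest idempotent of $Q$ below $q$, and using naturality together with the description of $\chi_m^u$ via $V_y^u$, forces $\phi^-=(\phi')^-$. Since $\chi_m$ is already idempotent-valued ($\chi_m^-=\chi_m$), this yields the asserted unique normalized characteristic morphism.

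\textbf{Main obstacle.} The substantive work lies in Part 4, specifically in upgrading the \emph{existence} of some density witness $m'\in[u^-,u]$ to a statement pinned at the canonical level $v^\ast=\bigvee V_y^u$. The geometric hypothesis is what makes $\chi_m^u(y)=(v^\ast)^-$ computable, and the sup-closure of $V_y^u$ (from the sheaf condition on $S$) is what makes $v^\ast$ available as a working level; the rest is bookkeeping with $(-)^-$ and the axioms of a commutative semicartesian quantale. This is also precisely the step responsible for the word ``essentially'' and the fact that uniqueness is only up to the idempotent closure.
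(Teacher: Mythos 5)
Your overall strategy coincides with the paper's: the same candidate $\chi_m^u(y)=\bigvee\{v^-: v\in V_y^u\}$ (the paper's $u_y^-$, computed via geometricity as $(\bigvee\langle y,u\rangle)^-$), the same use of down-closure and sup-closure of $V_y^u$, and Parts 1--3 are essentially the paper's argument. However, there are two genuine gaps. First, in Part 4 the converse direction breaks at the step ``combining the sheaf-compatibility of $S$ at the level $v^\ast\vee m'$ \dots produces a gluing witnessing $m'\in V_y^u$'': to glue a section over the cover $\{v^\ast, m'\}$ of $v^\ast\vee m'$ you need sections of $S$ over \emph{both} $v^\ast$ and $m'$, and a section over $m'$ is exactly what you are trying to produce; since $m'$ and $v^\ast$ need not be comparable, down-closure does not help either. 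The paper avoids this detour entirely: from $u^-=(v^\ast)^-\le v^\ast$ and down-closure one gets $F(u^-\le u)(y)\in S(u^-)$, and then the density of $S\hookrightarrow F$ is invoked directly at the level $u^-$ to conclude $y\in S(u)$. You should replace your transfer-to-$m'$ argument by this restriction to $u^-$.

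Second, Part 5 does not actually prove uniqueness. The equivalence $\phi_u(y)=u^-\Leftrightarrow y\in\operatorname{range}(m_u)\Leftrightarrow\phi'_u(y)=u^-$ only constrains $\phi_u(y)$ for those $y$ lying in (or out of) $S(u)$ at the top level; it says nothing about the value of $\phi_u(y)^-$ for a general $y$, and ``using naturality together with the description of $\chi_m^u$'' is left as a black box precisely where the work lies. The paper's Claim (i) supplies the missing mechanism: for any pullback-inducing $\phi$ and any $v\in\langle y,u\rangle$, naturality plus the pullback condition at level $v$ give $v^-=\phi_u(y)\odot v^-$, hence $u_y^-\le\phi_u(y)$; and evaluating naturality at the level $v=\phi_u(y)$ itself gives $\phi_{\phi_u(y)}(F(\phi_u(y)\le u)(y))=\phi_u(y)^-$, which by the pullback condition forces $\phi_u(y)\in\langle y,u\rangle$ and hence $\phi_u(y)\le u_y$. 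The sandwich $u_y^-\le\phi_u(y)\le u_y$, together with the paper's Claim (ii) that $(\ )^-\circ\phi$ still induces a pullback, yields $\phi_u(y)^-=u_y^-$ and hence $\phi^-=(\phi')^-$. Some argument of this kind (applying the pullback condition at \emph{all} levels $v\le u$, not just at $u$) must be added for Part 5 to stand.
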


  \begin{proof}

\begin{enumerate}

    \item Since $u^-$ is an idempotent, then $u^- \in \Omega^-(u)$ (in fact, $u^- = max \Omega^-(u)$). If $v \leq u$ then, by  Proposition \ref{prop:properties_of_idempotentaprox}.(9) $v^-\odot u^- = v^-$, thus $\top = (\top_u)_{u \in Q}$ is a natural transformation. By Proposition \ref{prop:mono_iff_injective}.4, it is clear that $\top$ is a monomorphism of sheaves. In Example \ref{dense-ex}.(2) we argued that $\top$ is dense.
    
    \item First note that: Since $\top$ is a dense monomorphism, it follows from Proposition \ref{dense} that the pullback of a morphism $\phi : F \to \Omega^-$ through $\top$ must be a {\em dense} monomorphism $m : S \rightarrowtail F$. 
    
    Now, note that it is enough establish the result for dense subsheaves  $i_S : S \hookrightarrow F$. We will split the proof in two parts, but first we will provide some relevant definitions and calculations.

      \begin{center}
        $\langle y,u \rangle := \{v \in \Omega^-(u) : F(v\leq u)(y) \in S(v)\}$;\\
        $u_y := \bigvee\langle y,u \rangle$. 
        \end{center}
(a)  If  $v, w \in \Omega^-(u)$  and $w\leq v$, then $v \in \langle y,u \rangle \Rightarrow w \in \langle y,u \rangle$: $S$ is a subpresheaf of $F$.  In particular: if $v \in \langle y,u \rangle$, then $v^- \in \langle y,u \rangle$, since $( \ )^- : \Omega^- \to \Omega^-$ natural transformation, and $v^- \leq v$. 

(b) If $\{v_i : i \in I\} \subseteq \langle y,u \rangle$, then $\bigvee_i v_i \in \langle y,u \rangle$: since $\Omega^-(u)$ is closed under suprema and $S$ is a subsheaf of $F$.

(c) $u_y \in \langle y,u \rangle$ (by (b)) and $u_y^- \in \langle y,u \rangle$ (by (a)). Thus:
$$u_y = max\langle y,u \rangle \mbox{ and } u_y^- = max(\langle y,u \rangle \cap Idem(Q)).$$
(d) $u_y^- = \bigvee \{ e \in Idem(Q) : \exists v, v\preceq u^-, e = v^-, F(v\leq u)(y) \in S(v)\}$ : since $Q$ is a geometric quantale, we have 
\begin{align*}
    u_y^- &= (\bigvee\{ v \in \Omega^-(u) : F(v\leq u)(y) \in S(v)\})^- \\
    &= \bigvee\{v^- : v \in \Omega^-(u), F(v\leq u)(y) \in S(v)\}.
\end{align*}

    \underline{Candidate and uniqueness:}
    Suppose that $\phi : F \to \Omega^-$ is a natural transformation such that the diagram below is a pullback (where $S$ is dense subsheaf of $F$). 

% https://q.uiver.app/#q=WzAsNCxbMCwwLCJTIl0sWzAsMSwiRiJdLFsxLDEsIlxcT21lZ2FeLSJdLFsxLDAsIjEiXSxbMCwxLCJpX1MiLDIseyJzdHlsZSI6eyJ0YWlsIjp7Im5hbWUiOiJob29rIiwic2lkZSI6InRvcCJ9fX1dLFswLDMsIiFfUyJdLFszLDIsIlxcdG9wIl0sWzEsMiwiXFxwaGkiLDJdXQ==
\[\begin{tikzcd}[ampersand replacement=\&]
	S \& 1 \\
	F \& {\Omega^-}
	\arrow["{i_S}"', hook, from=1-1, to=2-1]
	\arrow["{!_S}", from=1-1, to=1-2]
	\arrow["\top", from=1-2, to=2-2]
	\arrow["\phi"', from=2-1, to=2-2]
\end{tikzcd}\]

Note that if $u^- \leq m \leq u$, then, by naturality, $$\phi_m(F(m\leq u)(y)) = \phi_u(y) \odot m^- = \phi_u(y) \odot u^- = \phi_u(y).$$

{\bf Claim (i):} It holds: $u_y^- \leq \phi_u(y) \leq u_y$. Moreover, if $\phi_u(y) \in Idem(Q)$, then $\phi_u(y) = u_y^-$. 

 Since the diagram is a pullback and limits in $Sh(Q)$ are pointwise, then for each $w \in Q$:
    
    $$x \in S(w) \Leftrightarrow x \in F(w) \ and \ \phi_w(x) = w^-$$
    
 Thus, if $v \leq u$ is such that $F(v\leq u)(y) \in S(v)$, then by naturality: $$v^- = \phi_v(F(v\leq u)(y))= \phi_u(y) \odot v^-.$$

Note that $u_y \leq u^- \leq u$ and $u_y \in$ $ \langle y,u \rangle$, thus $u_y^- = \phi_u(y) \odot u_y^-$ and  $u_y^- \leq \phi_u(y)$.

By naturality: $\phi_{\phi_u(y)}(F(\phi_u(y),u)(y))) = \phi_u(y) \odot \phi_u(y)^- = \phi_u(y)^-$

$\phi_u(y) \in \langle y,u \rangle$: since $\phi_u(y) \in \Omega^-(u)$ and $\phi_{\phi_u(y)}(F(\phi_u(y),u)(y))) = \phi_u(y)^-$ then, by the pullback condition, we have that $F(\phi_u(y),u)(y)) \in S(\phi_u(y))$, thus $\phi_u(y) \in \langle y,u \rangle$.

$\phi_u(y) \leq u_y$: since $\phi_u(u) \in \langle y,u \rangle$ and $u_y =  max\langle y,u \rangle$.

If $\phi_u(y) \in Idem(Q)$, then $\phi_u(y) = u_y^-$: since we have established above that $\phi_u(y) \in \langle y,u \rangle$, 
$u_y^- \leq \phi_u(y) \leq u_y$ and because $u_y^- = max(\langle y,u \rangle \cap Idem(Q))$.

Thus, if $\phi_u(y) \in Idem(Q)$, then 
$\phi_{\phi_u(y)}(F(\phi_u(y),u)(y))) 
 = \phi_u(y) = u_y^-$. 
    
{\bf Claim (ii):} If $\phi : F \to \Omega^-$ determines a pullback diagram, then $\phi^- = ( \ )^- \circ \phi$ still determines a pullback.

Since $x \in S(w) $ iff ($x \in F(w)$ and $\phi_w(x) = w^- = (w^-)^- = \phi_w^-(x)$). 

Combining Claim (ii) and Claim (i), $\phi_u^-(y) = u_y^-$ for each $u \in Q$ and $y \in F(u)$, establishing  the required uniqueness assertions. 
    
    \underline{Existence:} For each $u \in Q$ and $y \in F(u)$, define  $\chi^S _u(y) := u^-_y$. Then $(\chi^S_u)_{u \in Q}$ is a natural transformation and it determines a pullback diagram.

Firstly, we will verify that $(\chi^S_u)_{u \in Q}$ is a natural transformation.
Let $u, v \in Q$ be such that $v \leq u$ and let $y \in F(u)$. We have to show that:
$$\chi^S_v (F(v\leq u)(y)) = \chi^S_u(y) \odot v^-$$
    This means:
    $$max(Idem(Q) \cap \langle F(v\leq u)(y),v\rangle) = v^-\odot max(Idem(Q) \cap \langle y,u \rangle) $$
On the one hand, note that 
\begin{align*}
     v^-\odot u_y^- &=  v^-\odot max(Idem(Q) \cap \langle y,u \rangle)\\
     &=v^-\odot \bigvee (Idem(Q) \cap \langle y,u \rangle)\\
     &= \bigvee\{ v^- \odot e: e^2 = e = e\odot u^-, F(eu)(y) \in S(e)\}.
\end{align*}
 Denoting $e':=v^- \odot u_y^-$, we have $e'^2 = e' = e' \odot v^-$ and $F(e'v)(F(v\leq u)(y)) \in S(e')$, thus $e' = v^- \odot u_y^- \in Idem(Q) \cap \langle F(v\leq u)(y),v\rangle$ and then $$max(Idem(Q) \cap \langle F(v\leq u)(y),v\rangle) \geq v^-\odot max(Idem(Q) \cap \langle y,u \rangle).$$
On the another hand,  denote $e'' = max(Idem(Q) \cap \langle F(v\leq u)(y),v\rangle)$. Then $e''^2 = e'' = e'' \odot v^-$ and $F(e''v)(F(v\leq u)(y)) \in S(e'')$. Then $e'' \in Idem(Q), e'' \leq v^- \leq u^-$ and $e'' \in  \langle y,u \rangle$. Thus $e'' \in Idem(Q)$ and $e'' \leq v^-, u_y^-$. Then $e'' = e'' \odot e'' \leq  v^- \odot u_y^-$, i.e., $max(Idem(Q) \cap \langle F(v\leq u)(y),v\rangle) \leq v^-\odot max(Idem(Q) \cap \langle y,u \rangle)$.\\
Now we show that holds the pullback condition for each $u \in Q$:  
$$y \in S(u) \ \Leftrightarrow \ (y \in F(u) \ and \ u^- = \chi^S _u(y) = u^-_y).$$ 
On one hand, let $y \in S(u)$, then $y \in F(u)$ and $u^- \in \Omega^-(u)$ is s.t. $F(u^-\leq u)(y) \in S(u^-)$, since $S$ is a subpresheaf of $F$. Then $u^- \in Idem(Q) \cap \langle y,u \rangle$. Thus, by (b), $u^- \leq u_y^-$. On the other hand $u^-_y \in \Omega^-(u)$, thus $u_y^- \leq u^-$. Summing up: $\chi^S _u(y) = u^-_y = u^-$.

Let $y \in F(u)$ be such that $u^- = \chi^S _u(y) = u^-_y$. Then $u^- = max(\langle y,u \rangle \cap Idem(Q))$. Therefore $F(u^-\leq u)(y) \in S(u^-)$ and, since $i_S: S \hookrightarrow F$ is a dense inclusion,  we have $y \in S(u)$.
\end{enumerate}
\end{proof}

Using $(-)^+$ instead of $(-)^-$ we actually obtain a subobject classifier, $\top : \textbf{1} \to \Omega^+$, but this requires extra conditions on $Q$. Thus, similar to the localic case, there is a pair of inverse bijections  $Sh(Q)(F, \Omega^+) \rightleftarrows Sub(F)$ that are natural in $F$. In particular, $Sh(Q)(\textbf{1}, \Omega^+) \cong Sub(\textbf{1}) \cong Q$ explains how the ``internal truth values object" of $Sh(Q)$, $\Omega^+$, encodes the ``external  truth values" of $Sh(Q)$, the quantale $Q \cong Sub(\textbf{1})$.

We construct it as follows: 

Let $Q$ be a unital, commutative, semicartesian and {\em double distributive} quantale. 

(a) For each $u \in Q$, define $$\Omega^+(u) := \{q \in Q : q^+ \odot u =  q \} = \{q \in Q : q^+ \odot u\leq q \mbox{ and } q \leq u\}.$$

(b)  Note that:
\begin{itemize}
    \item $0 = min (\Omega^+(u)); u = max (\Omega^+(u))$; $\Omega^+(u) = \{q \in Q : q^+ \odot u\leq q \mbox{ and } q \leq u\}$.

    If $q = q^+\odot u$, then $q \leq u$ and $q^+\odot u \leq q$. If $q \leq u$ and $q^+\odot u \leq q$, then $q = q^+\odot q \leq q^+\odot u \leq q $.
    \item  $\Omega^+(u)  = \{q' \in Q : \exists e, e\odot e =e, q' = e \odot u\}$.

    If $q \in \Omega^+(u)$, then  $q = q^+ \odot u$ (by definition) and  $q^+ \in Idem(Q)$. If $e\odot e  = e$, then $e\odot u  \leq u$ and $(e \odot u)^+ \odot u \leq (e^+ \odot u^+) \odot u = e \odot (u^+ \odot u) = e \odot u$. Then, by the previous item, $e\odot u \in \Omega^+(u)$. 
    \item If $q \in \Omega^+(u)$, then $q^+ \odot u^+ = q^+$ and $q^+ \in \Omega^+(u^+)$.

    Suppose that $q \in \Omega^+(u)$. By the  previous observation $q = q \odot u^+$ and, by the minimality of $q^+$, we have $q^+\leq u^+$. Since $q^+, u^+$ are idempotents, we obtain $q^+ = q^+ \odot u^+ = {q^+}^+ \odot u^+$, thus $q^+ \in \Omega^+(u^+)$.
\end{itemize}

(c) Given $v \leq u$ in $Q$, we can define a corresponding restriction map by:\begin{align*}
    \Omega^+(u) &\rightarrow \Omega^+(v)\\
     q\,\,\,\, &\mapsto q^+ \odot v
\end{align*}
\begin{itemize}
    \item $q^+ \odot v \in \Omega^+(v)$:
Since,  in general, for every  $e \in Idem(Q)$, $e \odot v \in \Omega^+(v)$.
    \item If $v \leq u$ and  $q \in \Omega^+(u)$, then $ q^+ \odot v \in \{q' \in Q : q' \odot u^+ = q' \mbox{ and } q' \leq u \} \supseteq \Omega^+(u)$, since:\\
$q^+ \odot v \leq q^+ \odot u = q \leq u$ and $q^+ \odot v =  q^+ \odot v \odot v^+ \leq  q^+ \odot v \odot u^+ \leq q^+ \odot v$.

    \item If $v^+ \odot u = v$, then $v \leq u$ and $\Omega^+(v)\subseteq \Omega^+(u)$:\\
Since $v = v^+ \odot u \leq u$ and if $e \in Idem(Q)$, then $e \odot v = e\odot (v^+ \odot u) = (e \odot v^+) \odot u$ and the result follows from a characterization given in (b).
\end{itemize}
(d)  $\Omega^+$ is a presheaf on $Q$.

The definition of $\Omega^+(u)$ gives $ q =  q^+ \odot u$, $\forall u \in  Q$, $\forall q \in \Omega^+(u)$. 
In other words,  $q_{\restriction_u} = q$. Concerning the composition: If $w\leq v \leq u$, on the one hand 
\begin{align*}
    q_{\restriction_w} &= q^+ \odot w \\
    &= q^+  \odot w \odot w^+ & w \odot w^+ = w \\
    &\leq  q^+  \odot v \odot w^+ & w \leq v\\
    &\leq (q^+  \odot v)^+ \odot w^+ & (-)^+ \mbox{ is a supremum} \\
    &= (q_{\restriction_v})_{\restriction_w}
\end{align*}
On the other hand
\begin{align*}
    (q_{\restriction_v})_{\restriction_w} &= (q^+  \odot v)^+ \odot w  \\
    &\leq q^{+^+} \odot v^+ \odot w & (a \odot b)^+ \leq a^+ \odot b^+ \\
    &=  q^+  \odot v^+ \odot w \\
    & \leq q^+ \odot w \\
    &  = q_{\restriction_w}
\end{align*}

(e) $\Omega^+$ is a separated presheaf.

Let $u = \bigvee\limits_{i \in I} u_i$, and $p, q \in \Omega^+(u)$ such that $q_{\restriction_{u_i}} = p_{\restriction_{u_i}} $, $\forall i \in I.$
 
 Then:

\begin{align*}
    q &= q^+ \odot u =  q^+ \odot \bigvee\limits_{i \in I} u_i = \bigvee\limits_{i \in I} q^+ \odot u_i = \bigvee\limits_{i \in I} q_{\restriction_{u_i}} \\
    &= \bigvee\limits_{i \in I} p_{\restriction_{u_i}} = \bigvee\limits_{i \in I} p^+ \odot u_i = p^+ \odot \bigvee\limits_{i \in I} u_i = p^+ \odot u \\
    &= p
\end{align*}

(f) If we assume, in addition, that $Q$ is, a {\em divisible quantale} satisfying the coherence property below: 

$$(coherence) \hspace{1cm} \forall a,b, a', b' \in Q, \ a \leq b, a' \leq b', \ $$ 
$$   a \odot b' = a' \odot b \ \Rightarrow \  a^+ \odot {b'}^+ = {a'}^+ \odot {b}^+ $$\label{coherence}

Then $\Omega^+$ is a sheaf.

Note that: \\
If $q \in \Omega^+(u)$ and $v \leq u$, then $ q^+ \odot v \in \{q' \in Q : q' \odot u^+ = q' \mbox{ and } q' \leq u \} \supseteq \Omega^+(u)$:\\
Since: $q^+ \odot v \leq q^+ \odot u = q \leq u$ and, as $Q$ is divisible and $v \leq u$, $ (q^+ \odot v) \odot u^+ = q^+ \odot (v \odot u^+) = q^+ \odot (v \odot v^+) = q^+ \odot v$,  by Lemma  \ref{idempclosure+ lemma}.14.

Now we will verify that any compatible family can be glued.

Let $u = \bigvee\limits_{i \in I} u_i$ and consider $\{q_i \in \Omega^+(u_i)\}_{i \in I}$ be a  compatible family. 

This means $q_i^+ \odot (u_i\odot u_j) = q^+_j \odot (u_i \odot u_j)$ and, this is equivalent to 
$$(compatibility) \ q_i \odot u_j = q_j \odot u_i, \ \forall i,j \in I$$

Set $q := \bigvee\limits_{i \in I}q_i^+ \odot u$.

We have that  $q \in \Omega^+(u)$ because, by item (b):

$$q = \bigvee\limits_{i \in I}q_i^+ \odot u \leq  u$$

$$q^+ \odot u =  (\bigvee\limits_{i \in I}q_i^+ \odot u)^+ \odot u \leq  
\bigvee\limits_{i \in I}(q_i^+ \odot u)^+ \odot u \leq \bigvee\limits_{i \in I}(q_i^+ \odot u^+) \odot u) =   \bigvee\limits_{i \in I}q_i^+ \odot u = q$$

We have $q_j \leq q_{\restriction u_j}$.
Indeed: 

$q_j = q_j^+ \odot u_j =  (q_j^+ \odot u_j)^+ \odot  u_j \leq   (\bigvee\limits_{i,k \in I}q_i^+ \odot u_k)^+ \odot u_j = (\bigvee\limits_{i \in I}q_i^+ \odot u)^+ \odot u_j = q_{\restriction u_j}$

It remains to prove that $q_{\restriction u_j} \leq q_j$. We will use the extra hypothesis on the quantale $Q$ to obtain this.

\begin{align*}
    q_{\restriction u_j} 
    &= q^+  \odot u_j  \\
    & = (\bigvee\limits_{i \in I}q_i^+ \odot u)^+ \odot u_j   & \mbox{by definion of } q \\
     & = (\bigvee\limits_{i \in I}( q_i^+ \odot u)^+) \odot u_j \  & \mbox{by Lemma }\ref{idempclosure+ lemma}.16 \\
     & \leq (\bigvee\limits_{i \in I} ({q_i^+}^+ \odot u^+)) \odot u_j \  &\mbox{by Lemma } \ref{idempclosure+ lemma}.9 \\
     & = \bigvee\limits_{i \in I} q_i^+ \odot (u^+ \odot u_j) &  \mbox{by Lemma } \ref{idempclosure+ lemma}.7 \\
      & = \bigvee\limits_{i \in I} q_i^+ \odot u_j \  & \mbox{by Lemma } \ref{idempclosure+ lemma}.15 \\
       & = \bigvee\limits_{i \in I} (q_i^+ \odot u_j^+) \odot u_j \  & \\
        & = \bigvee\limits_{i \in I} (q_j^+ \odot u_i^+) \odot u_j \  & by \ (compatibility) \ and \ (coherence) \\
         & = (\bigvee\limits_{i \in I}  u_i^+) \odot (q_j^+ \odot u_j) \  & \\
          & = (u^+) \odot (q_j) \  &\mbox{by Lemma } \ref{idempclosure+ lemma}.16 \\
          & = q_{j}   \\
\end{align*}
where the last equality holds because 
$q_j \geq (u^+) \odot (q_j) \geq (u_j^+) \odot (q_j) = q_j$ by Lemma \ref{idempclosure+ lemma}.15 and item (b).

Therefore, the gluing exists. By Proposition \ref{lema2.1.5}.1, we conclude $\Omega^+ $ is a sheaf.

\begin{remark} \label{+rem}

\begin{enumerate}

\item There are some examples of quantales that satisfy all the conditions in the above construction: the locales, the quantales of ideals of any PID, the quantales $\mathbb{R}_{+} \cup \{\infty\}$, $\mathbb{N} \cup \{\infty\}$, etc.   Moreover, note that this class of quantales is closed under arbitrary products and that the   mapping $Q \mapsto \Omega^+$  preserves the product construction (see Proposition \ref{prop.productand interval_of_sheaves}).

\item
In the localic case, the sheaf $\Omega^+$ coincides with the subobject classifier sheaf, which we denote by $\Omega_0$. 
Note that if $Q$ is a locale, then $\Omega^+(u) = \{q \in Q \, : \, q^+ \odot u = q\} = \{q \in Q \,:\, q \leq u\} = \Omega_{0}(u)$, and $T_u(*) = u$. Thus  $T : 1 \to \Omega^+$ coincides with the subobject classifier in the category of sheaves on locales \cite[Theorem 2.3.2]{borceux1994handbook3}.
Moreover, note that, in general, the restriction
$\Omega^+_{\restriction Idem(Q)} : Idem(Q)^{op} \to Set$ is such that for each $e \in Idem(Q)$ $\Omega^+_{\restriction Idem(Q)}(e) \cap Idem(Q) = \{e' \in Idem(Q): e' \leq e\}= {\Omega_0}(e) $ is the value of the subobject classifier of the localic topos $Sh(Idem(Q))$.
We will readdress  this subject at Theorem \ref{classifier(+)-th}.

\item  If $Q$ is  the quantale that satisfies the property: for each $q \in Q$, if $q >0$, then $q^+ = \top$. Then $\Omega^+(u) = \{0, u\}$, for each $u \in Q$. Note that this condition holds whenever $Q$ is the quantale of ideals of some PID or one of the linear quantales $\mathbb{R}_{+} \cup \{\infty\}$, $\mathbb{N} \cup \{\infty\}$, for instance.
%If $Q$ is  the quantale of ideals of a PID or the quantale $\mathbb{R}_+ \cup \{\infty\}$ or  $\mathbb{N} \cup \{\infty\}$, then $\Omega^+(u) = \{0, u\}$, for each $u \in Q$.

\end{enumerate}
\end{remark}

%\begin{remark}
%  {\color{blue}{MELHOR TIRAR:  The universal monomorphism $\top : 1 \to \Omega^+$ is not a dense monomorphism, in general. Take $u,m \in Q$,  $0<u<1$ and $u^- \leq m<u, m^+\odot u = m$. Thus $\Omega^+(mu)(m) = m^+  \odot m = m = \top_m(*)$, but $m \neq u = \top_u(*)$. CRIAR EXemplo, senao so omitimos. O problema é que nao sabemos se tem um superdivisbvel coerente que nao é locale e o caso $\Omega^+(u) = \{0, u\}$ nao rola,...}}
%\end{remark}

\begin{remark}
\begin{enumerate} 

\item Let $Q$ be a commutative, semicartesian, unital and double distributive quantal. For each $v, u \in Q$ such that $v\leq u$ are equivalent: (i) $v \preceq (u \to v)$; (ii) $v^+ \leq (u\to v)$; (iii) $v^+ \odot u \leq v$; (iv) $v^+ \odot u = v$; (v) $v \in \Omega^+(u)$; (vi) $\Omega^+(v) \subseteq \Omega^+(u)$. See Remark \ref{remark:def_implication} to remind the meaning of $u \to v.$ 

\item If a commutative quantale $Q$ satisfies, for each $v \leq u$, the condition (i) above, then it will be called {\em strongly divisible}. Note that a quantale satisfying the hypothesis in previous item and that is strongly distributive, then (by item (iv)) it is divisible.

\item The class of quantales that are commutative, semicartesian, unital, double distributive and strongly divisible -- and its  subclass of quantales satisfying also the condition of {\em coherence} (\ref{coherence}) --,
 contains all locales and is closed under arbitrary products and under interval construction of the type $[e,1]$, where $e \in Idem(Q)$. 

\end{enumerate}
    
\end{remark}

%Tentar verificar se:

%(i) strongly divisible implies coherence

%$$(coherence) \ \forall a,b, a', b' \in Q $$

%$$ a \leq b, a' \leq b', a \odot b' = a' \odot b \ \Rightarrow \  a^+ \odot {b'}^+ = {a'}^+ \odot {b}^+ $$

%acho que vale: $a^+\odot {b'}^+ \odot b \odot b' = {a'}^+\odot {b}^+ \odot b \odot b'$

%(ii) conseguimos  algum exemplo que nao é locale...

\begin{theorem} \label{classifier(+)-th} 
Suppose that $Q$ is a  (commutative, semicartesian), unital, double-distributive, coherent and {\em strongly divisible} quantale.

%{\color{blue}{ANA, vou omitir esta parte em azul. O ponto é que o item (b) funciona para obter a unicidade, mas falha pra dar a naturlaidade!!!\\

%Suppose also that holds some of the conditions below (on the quantale $Q$ or on the  sheaf $\Omega^+$) %in {\color{red} Example \ref{classifiercand(+)-ex})}  

%(a)  %If $q \in \Omega^+(u)$ and $v \leq u$, then $q^+ \odot v \in \Omega^+(u)$. \footnote{We already have noted in Example \ref{classifiercand(+)-ex}  that if $q \in \Omega^+(u)$ and $v \leq u$, note that $ q^+ \odot v \in \{q' \in Q : q' \odot u^+ = q' \mbox{ and } q' \leq u \} \supseteq \Omega^+(u)$.} 
% If $v \leq u$, then  $\Omega^+(v)\subseteq \Omega^+(u)$.
%This condition holds if $ v \leq u \Leftrightarrow v^+\odot u = v$ and, in particular, it holds whenever $Q$ is a locale. Note, further, that this condition is closed under products of quantales (see Remark \ref{+rem})

%(b) If $q >0$, then $q^+ = \top$. This condition holds, if $Q$ is the quantale of ideals of some PID or the quantales $\mathbb{R}_{+} \cup \{\infty\}$, $\mathbb{N} \cup \{\infty\}$, for instance.

%}}

Then the sheaf $\Omega^+$ %essentially 
classifies all the subobjects  the category $Sh(Q)$. More precisely:
\begin{enumerate}
    \item  $\top : 1 \to \Omega^+$, given by $\top_u : \{*\} \to \Omega^+(u), \top_u(*) = u$ determines a monomorphism in $Sh(Q)$.
    \item For each  monomorphism of sheaves $m: S \rightarrowtail F$, there is a {\em unique}  morphism $\chi^m : F \to \Omega^+$,  such and such the diagram below is a pullback.
% https://q.uiver.app/#q=WzAsNCxbMCwwLCJTIl0sWzEsMCwiMSJdLFswLDEsIkYiXSxbMSwxLCJcXE9tZWdhXisiXSxbMCwxLCIhX1MiXSxbMCwyLCJtIiwyXSxbMiwzLCJcXGNoaV9tIiwyXSxbMSwzLCJcXHRvcCJdXQ==
\[\begin{tikzcd}[ampersand replacement=\&]
	S \& 1 \\
	F \& {\Omega^+}
	\arrow["{!_S}", from=1-1, to=1-2]
	\arrow["m"', from=1-1, to=2-1]
	\arrow["{\chi_m}"', from=2-1, to=2-2]
	\arrow["\top", from=1-2, to=2-2]
\end{tikzcd}\]
  
    In particular, every monomorphism in $Sh(Q)$ is regular and the category $Sh(Q)$ is balanced (see Remark \ref{balanced-re}).
\end{enumerate}

\end{theorem}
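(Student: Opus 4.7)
The plan is to mimic the classical localic topos argument with $\Omega^+$ in place of $\Omega_0$, where the algebraic price of dropping idempotency is paid for by the coherence and strong divisibility hypotheses on $Q$.

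For part (1), to see that $\top : 1 \to \Omega^+$ is a monomorphism, I first check naturality: given $v \leq u$, the restriction sends $\top_u(*) = u$ to $u^+ \odot v$, and I need this to equal $v$. By strong divisibility, $v \leq u$ gives $v = v^+ \odot u$, whence $u^+ \odot v = u^+ \odot v^+ \odot u = v^+ \odot (u^+ \odot u) = v^+ \odot u = v$. Injectivity of each component is trivial, so Proposition \ref{prop:mono_iff_injective} gives that $\top$ is a mono.

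For part (2), the candidate classifying map is
\[
\chi_m(u)(y) \;:=\; \bigvee\{\, v \in \Omega^+(u) : F(v \leq u)(y) \in m_v(S(v))\,\}.
\]
First I must see that $\Omega^+(u)$ is closed under arbitrary suprema, which follows from Lemma \ref{idempclosure+ lemma}.16 (needing divisibility, supplied by strong divisibility): if $q_i = q_i^+ \odot u$, then $(\bigvee_i q_i)^+ \odot u = (\bigvee_i q_i^+) \odot u = \bigvee_i q_i$. Next I verify naturality of $\chi_m$: for $w \leq u$ and $a := \chi_m(u)(y)$, I want $\chi_m(w)(F(w \leq u)(y)) = a^+ \odot w$. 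For the direction $\geq$, since $a \in \Omega^+(u)$ gives $a = a^+ \odot u$, one has $a^+ \odot w \leq a$, and functoriality of $m$ pushes $F(a \leq u)(y) \in m_a(S(a))$ down to $F(a^+ \odot w \leq u)(y) \in m_{a^+\odot w}(S(a^+\odot w))$, while $a^+ \odot w \in \Omega^+(w)$ by Lemma \ref{idempclosure+ lemma}.9. For the direction $\leq$, any $v \in \Omega^+(w)$ appearing in the sup defining $\chi_m(w)$ satisfies $v \leq w \leq u$, and strong divisibility gives $v = v^+ \odot u$, so $v \in \Omega^+(u)$; hence $v \leq a$, then Lemma \ref{idempclosure+ lemma}.15 gives $v^+ \leq a^+$, so $v = v^+ \odot w \leq a^+ \odot w$.

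For the pullback property, fix $u$: if $y \in m_u(S(u))$, then $u$ itself belongs to the set defining $\chi_m(u)(y)$, forcing $\chi_m(u)(y) = u$. Conversely, if $\chi_m(u)(y) = u$, let $V$ be the indexing set in the definition; then $u = \bigvee V$ is a cover. For each $v \in V$, let $s_v \in S(v)$ be the unique element with $m_v(s_v) = F(v \leq u)(y)$; injectivity of $m$ (from Proposition \ref{prop:mono_iff_injective}) makes the family $(s_v)_{v \in V}$ compatible. Gluing via the sheaf condition on $S$ yields $s \in S(u)$, and separatedness of $F$ forces $m_u(s) = y$. Uniqueness of $\chi_m$ is then squeezed from the pullback property and naturality: any $\phi$ classifying $m$ must satisfy $\phi_v(F(v \leq u)(y)) = v$ for $v \in \langle y, u\rangle \cap \Omega^+(u)$, so $v = \phi_u(y)^+ \odot v \leq \phi_u(y)^+$ and hence $v \leq \phi_u(y)^+ \odot u = \phi_u(y)$, giving $\chi_m(u)(y) \leq \phi_u(y)$; setting $a := \phi_u(y)$ and using naturality again, $\phi_a(F(a \leq u)(y)) = a^+ \odot a = a$, so $a \in \langle y, u\rangle$ and $a \leq \chi_m(u)(y)$.

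Finally, every mono $m : S \rightarrowtail F$ is regular because it is the equalizer of the parallel pair $\chi_m,\; \top \circ !_F : F \rightrightarrows \Omega^+$: pointwise the equalizer is $\{y \in F(u) : \chi_m(u)(y) = u\}$, which by the pullback property is precisely $m_u(S(u))$. Balancedness of $Sh(Q)$ then follows from Remark \ref{balanced-re}. The main obstacle throughout is the naturality verification for $\chi_m$, where the interplay between $(-)^+$, the restriction in $\Omega^+$ (which is not a plain truncation by $\wedge$ as in the localic case), and the sup are exactly what forces the strong divisibility and coherence hypotheses to be invoked; without either of these, the sups over $\Omega^+(u)$ and the identification of $v^+ \odot u$ with $v$ fail, and the candidate ceases to be a natural transformation.
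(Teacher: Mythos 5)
Your proof is correct and follows essentially the same route as the paper's: the same candidate $\chi_m(u)(y) = \bigvee\{v \in \Omega^+(u) : F(v\leq u)(y) \in m_v(S(v))\}$ (the paper's $u^y$), the same reliance on strong divisibility and Lemma \ref{idempclosure+ lemma} to control the restriction maps and the supremum, and the same pullback and uniqueness analysis. The only cosmetic differences are that you make explicit the gluing argument behind the fact that the supremum defining $\chi_m(u)(y)$ is attained, and your uniqueness step derives $u^y \leq \phi_u(y)$ directly from $v = \phi_u(y)^+ \odot v \leq \phi_u(y)^+ \odot u$ rather than passing through the intermediate bound $u^y \leq (\phi_u(y))^+$ as the paper does.
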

\begin{proof}
The strategy of this proof is similar to the corresponding proof of Theorem \ref{classifier(-)-th}, so we will just will provide details in some parts. 
 
  Let $i_S : S \hookrightarrow F$ be a subsheaf. 
  
  For each $u \in Q$ and each $y \in F(u)$ we define:
$\langle u,y \rangle := \{q \in \Omega^+(u): F(q\leq u)(y) \in S(q)\}$ and $u^y := \bigvee\langle u,y \rangle$. Then $u^y = max\langle u,y \rangle$, since  the  set $\langle u,y \rangle$ is closed under suprema (because $\Omega^+(u)$ is closed under suprema and $S$ is a subsheaf of $F$).

\underline{Candidate and uniqueness:}

Suppose that $\phi : F \to \Omega^+$ is a natural transformation such that the diagram
$$S \overset{i_S}\hookrightarrow F \overset{\phi}\to \Omega^+ \overset{\top}\leftarrow 1 \overset{!_S}\leftarrow S$$
is a pullback.

By naturality, note that for each $u, v \in Q$, if $v \leq u$ and $y \in F(u)$, then 
$$\phi_v(F(v\leq u)(y)) = {\phi_u(y)}^+ \odot v \leq {\phi_u(y)}^+ \odot u = {\phi_u(y)}.$$

By the pullback condition, if $v \leq u$ is such that $F(v\leq u)(y) \in S(v)$, then: 
\begin{align*}(*)
    \hspace{1.5cm}  v = \phi_v(F(v\leq u)(y))= \phi_u(y)^+ \odot v
\end{align*}

Moreover, if $v \in \Omega^+(u)$, then:
\begin{align*}(**)
    \hspace{1.5cm} v \leq \phi_u(y) \odot v^+.
\end{align*}

Indeed, \\ $$v = u \odot v^+ = u \odot (\phi_u(y)^+ \odot v)^+ \leq  u \odot ((\phi_u(y)^+)^+ \odot (v)^+ =  u \odot (\phi_u(y)^+) \odot v^+ = \phi_u(y) \odot v^+$$

Since $u^y \in \langle u,y \rangle$, we obtain, by (*),  
$(\phi_u(y))^+ \odot u^y = u^y$ and, in particular, $u^y \leq \phi_u(y)^+$.

 On the other hand, denote $\phi_u(y) = u' \in \Omega^+(u)$, then $u'=\phi_u(y) = \phi_{u'}(F(u'\leq u)(y))$, thus $u' \in \langle y,u \rangle$. Therefore:
 
\begin{align*}(***)
    \hspace{2cm}  \phi_u(y) \leq   u^y.
\end{align*}

By the relation just above and taking $v = u^y$ in (**) we obtain

 $u^y \leq \phi_u(y) \odot {u^y}^+ \leq u^y \odot {u^y}^+ = u^y$, i.e. $u^y \in \Omega^+(\phi_u(y))$. 
 
 Thus:
 \begin{align*}(****) 
     \hspace{2cm} (\phi_u(y))^+ \odot u^y  = u^y =\phi_u(y) \odot {u^y}^+.
 \end{align*}
 
 Summing up: $ \phi_u(y) \leq   u^y \leq (\phi_u(y))^+$. Therefore $(\phi_u(y))^+ =   (u^y)^+$.

Now we prove that the hypothesis of {\em strongly divisible} on $Q$ entails $\phi_u(y) = u^y$: this establishes the   uniqueness.

By (***) and since $\phi_u(y) \leq u^y$ entails $\Omega^+(\phi_u(y)) \subseteq \Omega^+(u^y)$ and $\phi_u(y) \in  \Omega^+(\phi_u(y))$, then $\phi_u(y) = \phi_u(y)^+ \odot u^y$. Thus by (****) $\phi_u(y) = \phi_u(y)^+ \odot u^y = u^y$.

\underline{Existence:}

For each $u \in Q$ and $y \in F(u)$, define
$$\chi^S_u(y) := u^y = max\{q \in \Omega^+(u): F(q\leq u)(y) \in S(q)\} \in \Omega^+(u).$$

Since $u = max \Omega^+(u)$, then $y \in S(u)$ iff $\chi^S_u(y) = u$. Thus for each $u \in Q$, the map $\chi^S_u : F(u) \to \Omega^+(u)$ define a pullback diagram in the category $Set$.

It remains only to check that $(\chi^S_u)_{u \in Q}$ is a natural transformation. Let $v \leq u$. 

We always have
$\chi^S_u(y)^+ \odot v \leq \chi^S_v(F(v\leq u)(y))$, since:\\
$\chi^S_v(F(v\leq u)(y)) = max\{q' \in \Omega^+(v): F(q'\leq v)(F(v\leq u)(y)) \in S(q')\}$ and
$\chi^S_u(y)^+ \odot v \in \{q' \in \Omega^+(v): F(q'\leq v)(F(v\leq u)(y)) \in S(q')\}$. 

Indeed:
$\chi^S_u(y) = u^y \in \{q \in \Omega^+(u): F(q\leq u)(y) \in S(q)\} \subseteq \Omega^+(u)$, thus: \\
(i) $(u^y)^+ \odot v \in \Omega^+(v) $; \\ (ii) $(u^y)^+ \odot v \leq (u^y)^+ \odot u = u^y$, thus $F((u^y)^+ \odot v\leq u)(y) \in S((u^y)^+ \odot v)$. \\
Therefore:  
$\chi^S_u(y)^+ \odot v \in \Omega^+(v)$ and $F((u^y)^+ \odot v \leq v)(F(v\leq u)(y)) \in S((u^y)^+ \odot v)$.

We will use the hypothesis that $Q$ is strongly divisible to obtain obtain\\
$$\chi^S_u(y)^+ \odot v \geq \chi^S_v(F(v\leq u)(y)) \mbox{ and that establishing the naturality of } (\chi^S_u)_{u \in Q}.$$

Let $v' =  \chi^S_v(F(v\leq u)(y))$. Since $F(v'\leq u)(y) \in S(v')$ and $v'\in \Omega^+(v)$, by strongly divisibility of $Q$, $\Omega^+(v) \subseteq \Omega^+(u)$. Thus $v' \in \langle y,u \rangle$ and, therefore, $v' \leq u^y$.
Then $\chi^S_v(F(v\leq u)(y)) \leq \chi^S_u(y)$. Therefore  
$$\chi^S_v(F(v\leq u)(y)) = \chi^S_v(F(v\leq u)(y))^+ \odot v \leq \chi^S_u(y)^+ \odot v,$$
as we wish. 
\end{proof}
     
    Finally, we observe that Corollary 4.5 in \cite{nlab:subobject_classifier} says that if $\mathcal{C}$ is a category with all finite limits with subobject classifier, then the poset $Sub(C)$ of subobjects of $C$, for every  object $C$ in $\mathcal{C}$, meets distribute over any existing join. So, we have found that $\Omega^+$ is the subobject classifier of $Sh(Q)$ and, by Theorem \ref{subobj-th}, $Sub(Q(-,1)) \cong Q$. So, the extra condition we had to impose over $Q$ to obtain that $\Omega^+$ is a subobject classifier may be implying that $Q$ has an underlying localic structure.

\section{Final Remarks and Future Works}\label{sec:future_works}

Note that for each  (finite or infinite) cardinal $\theta$, and for each sheaf $F$, there are ``external operations'': $\bigvee^\theta_F, \bigwedge^\theta_F : Sub(F)^\theta \to Sub(F)$ and $*_F : Sub(F) \times Sub(F) \to Sub(F)$; moreover, these operation are natural in $F$. Thus,  for the subclass of commutative semicartesian quantales in the conditions of Theorem \ref{classifier(+)-th}, an application of Yoneda lemma gives us ``internal operations''(= morphism in $Sh(Q)$) $\bigvee^\theta, \bigwedge^\theta : (\Omega^+)^\theta \to \Omega^+$ and $* : \Omega^+ \times \Omega^+ \to \Omega^+$. Based on the relationship between such internal operations, we  have started a study of  the internal logic of the category $Sh(Q)$: these results will be part of a future work.

Also, we are developing a parallel work regarding change of basis for sheaves on quantales: Suppose $\phi : Q \to Q'$ preserves $\odot$,  suprema and unity (thus $\phi$ is a functor). Then we obtain a ``change of basis" functor $\phi_* : Sh(Q') \to Sh(Q)$

$$(F' \overset{\eta'}\to G') \ \mapsto \ (F'\circ\phi^{op} \overset{\eta'_{\phi}}\to G'\circ \phi^{op}) $$

Since limits in categories of sheaves are coordinatewise, this functor $\phi_*$ clearly preserves limits, thus it is natural to pose the question if it is a right adjoint. The candidate to be the left adjoint is the same one that appears in the localic case and so we want to study under what conditions the left Kan extension  $Ran_{\phi^{op}} : PSh(Q) \to PSh (Q')$ restricts to $Ran_{\phi^{op}} : Sh(Q) \to Sh(Q')$. The technical construction will appear somewhere else but we leave here two motivations to go deeper into this question: (i) Kan extensions are known as the ``best approximation" of a given functor through another given functor. Since the inclusion $Idem(Q) \to Q$ preserves multiplication,  suprema and unity we may argue that $Sh(Idem(Q))$ is the best localic topos associated to $Sh(Q)$ and (ii) study sheaves on rings by studying sheaves on topological spaces and vice-versa through functors between the locale of open subsets of a topological space $X$ and the quantale of ideals of a ring generated by the $X$, as the ring of continuous functions, for instance. We are investigating such relations and applying cohomological techniques aiming to find closer relationships between algebra and geometry. 

Moreover, we are exploring a notion of covering appropriate for monoidal categories that is more general than a Grothendieck pretopology such that the respective category of sheaves -- called \textit{Grothendieck lopos} -- encompass both Grothendieck topos and $Sh(Q)$. This approach leads to the development of an \textit{elementary lopos theory}, a generalization of toposes but with a linear internal logic. In \cite{alvim2023mathscr}, the coauthors of the present work describe a category of $Q$-Sets, which will also be used to guide the future definition of a lopos, and when $Q$ is locale we obtain a well-known equivalence between the category of sheaves on a locale and complete $Q$-Sets. Therefore, this paper is one of many steps in the direction of a broader project.

\section{Acknowledgments}

This study was financed by the Coordenação de
Aperfeiçoamento de Pessoal de Nível Superior - Brasil (CAPES) - Finance Code 001.  Ana Luiza Tenório was funded by CAPES. Grant Number 88882.377949/2019-01. Caio de Andrade Mendes was supported by CAPES Grant Number 88882.377922/2019-01.

\bibliographystyle{abbrv} 
\bibliography{ArXiv_sh_q}

\end{document}